\numberwithin{equation}{section}
\newtheorem{prop}{Proposition}
\newtheorem{lemma}[prop]{Lemma}
\newtheorem{thm}[prop]{Theorem}
\newtheorem{cor}[prop]{Corollary}
\numberwithin{prop}{section}
\theoremstyle{definition}
\newtheorem{defn}[prop]{Definition}
\newtheorem{ex}[prop]{Example}
\newtheorem{rmk}[prop]{Remark}
\newtheorem{ques}[prop]{Question}
\newtheorem{nota}[prop]{Notation}
\newcommand{\del}{\partial}
\newcommand{\dt}{\frac{\partial}{\partial t}}
\newcommand{\tdt}{\tfrac{\partial}{\partial t}}
\newcommand{\brs}[1]{\left| #1 \right|}
\newcommand{\gG}{\Gamma}
\renewcommand{\gg}{\gamma}
\newcommand{\gD}{\Delta}
\newcommand{\gs}{\sigma}
\newcommand{\gk}{\kappa}
\newcommand{\gl}{\lambda}
\newcommand{\ga}{\alpha}
\newcommand{\gb}{\beta}
\renewcommand{\ge}{\epsilon}
\newcommand{\N}{\nabla}
\newcommand{\BB}{\mathcal B}
\newcommand{\LL}{\mathcal L}
\newcommand{\til}[1]{\widetilde{#1}}
\renewcommand{\bar}[1]{\overline{#1}}
\newcommand{\IP}[1]{\left<#1\right>}
\DeclareMathOperator{\gm}{\mathcal{G}}
\DeclareMathOperator{\gr}{\mathcal{R}}
\DeclareMathOperator{\Rc}{Rc}
\DeclareMathOperator{\rc}{Rc}
\DeclareMathOperator{\grc}{\mathcal{R\hspace{-.4mm}c}}
\DeclareMathOperator{\dil}{\mathcal{D\hspace{-.4mm}il}}
\DeclareMathOperator{\fgrc}{\bar{\mathcal{R\hspace{-.4mm}c}}}
\DeclareMathOperator{\grm}{\mathcal{R\hspace{-.3mm}m}}
\newcommand{\differential}{\mathcal D}
\newcommand{\eh}{\ms S}
\DeclareMathOperator{\tr}{tr}
\DeclareMathOperator{\Id}{Id}
\DeclareMathOperator{\divg}{div}
\DeclareMathOperator{\Isom}{Isom}
\DeclareMathOperator{\End}{End}
\DeclareMathOperator{\Aut}{Aut}
\newcommand{\on}{\operatorname}
\renewcommand{\div}{\on{div}}
\newcommand{\ms}{\mathscr}
\newcommand{\mc}{\mathcal}
\newcommand{\mf}{\mathfrak}
\newcommand{\mb}{\mathbb}
\newcommand{\w}[1]{\wedge^{\!#1}\,}
\newcommand{\slot}{\cdot}
\newcommand{\at}{\on{At}}
\newcommand{\tdels}{\tfrac{\del}{\del s}}
\newcommand{\dels}{\frac{\del}{\del s}}
\definecolor{imperialblue}{RGB}{0,62,116}
\begin{document}

\title[Ricci flow on Courant algebroids]{Ricci flow on Courant algebroids}

\begin{abstract} We develop a theory of Ricci flow for metrics on Courant algebroids which unifies and extends the analytic theory of various geometric flows, yielding a general tool for constructing solutions to supergravity equations.  We prove short time existence and uniqueness of solutions on compact manifolds, in turn showing that the Courant isometry group is preserved by the flow.  We show a scalar curvature monotonicity formula and prove that generalized Ricci flow is a gradient flow, extending fundamental works of Hamilton and Perelman.  Using these we show a convergence result for certain nonsingular solutions to generalized Ricci flow.
\end{abstract}

\date{\today}

\author{Jeffrey Streets}
\address{J.\,Streets \\ Rowland Hall\\
        University of California\\
        Irvine, CA 92617}
\email{\href{mailto:jstreets@uci.edu}{jstreets@uci.edu}}

\author{Charles Strickland-Constable}
\address{C.\,Strickland-Constable \\ Department of Physics, Astronomy and Mathematics, University of Hertfordshire, College Lane, Hatfield, AL10 9AB, UK}
\email{\href{mailto:c.strickland-constable@herts.ac.uk}{c.strickland-constable@herts.ac.uk}}

\author{Fridrich Valach}
\address{F.\,Valach \\ Department of Physics, Astronomy and Mathematics, University of Hertfordshire, College Lane, Hatfield, AL10 9AB, UK}
\email{\href{mailto:fridrich.valach@gmail.com}{fridrich.valach@gmail.com}}

\maketitle

\section{Introduction}

The Ricci flow is a central tool in understanding the geometry and topology of manifolds, with myriad applications.  In recent years, motivated by problems in complex geometry and mathematical physics, various geometric flows have emerged, in many cases motivated by the goal of constructing solutions to supergravity equations \cite{CollinsPhongSpinorflows, Feiparabolicsupergravity, GF19, OSW, PhongAnomaly,Ashmore:2023ift,Streetsexpent}.  A particularly motivating example for the present work is the \emph{generalized Ricci flow}.  A one-parameter family $(g_t, H_t)$ of Riemannian metrics and closed three-forms solves generalized Ricci flow if
\begin{gather} \label{f:exactGRF}
\begin{split}
    \dt g =&\ -2 \Rc + \tfrac{1}{2} H^2, \qquad \dt B = - d^*_g H, \qquad H = H_0 + dB,
    \end{split}
\end{gather}
where $H^2(X,Y) = \IP{i_X H, i_Y H}_g$.  These equations arise as the one-loop renormalization group flow for a certain nonlinear sigma model \cite{Friedanetal, Polchinski}, and they were shown to be a gradient flow using the string effective action in \cite{OSW}.  Fundamental existence and regularity properties for this equation were shown in \cite{Streetsexpent}, and natural conditions in complex geometry were shown to be preserved by this equation yielding pluriclosed flow \cite{PCF, PCFReg} and generalized K\"ahler-Ricci flow \cite{GKRF}.  Recently, by incorporating  a solution to an auxiliary scalar dilaton flow, a scalar curvature monotonicity was found \cite{Streetsscalar}.

Many of the geometric properties and analytic estimates for generalized Ricci flow arise from the relatively new subject of \emph{generalized geometry}.  The fundamental objects in this subject are Courant algebroids (cf.\ \S \ref{s:Courant}), which are vector bundles equipped with certain natural structures motivated by dynamical systems and Poisson geometry \cite{LWX,CourantDirac,Severa}, which have found applications in various subjects, including capturing Hitchin/Gualtieri's notion of a generalized complex structure \cite{HitchinGCY, GualtieriGCG}.  
Furthermore, the background equations for the bosonic string as well as the bosonic equations of motion of heterotic supergravity theories arise naturally as the Ricci-flatness condition for a generalized metric on a Courant algebroid~\cite{CSCW,GF14,Coimbra:2014qaa}. 
Key examples include supersymmetric solutions such as the well-known Hull--Strominger system \cite{Hullsystem,StromingerSST}, numerous further properties of which can be expressed in terms of Courant algebroids~\cite{delaOssa:2014cia,Ashmore:2018ybe,Garcia-Fernandez:2018ypt,Garcia-Fernandez:2020awc,Ashmore:2019rkx}. 
In \cite{StreetsTdual, GRFbook} it was shown that generalized Ricci flow \eqref{f:exactGRF} is equivalent to a flow of generalized metrics on an exact Courant algebroid, whereas the pluriclosed flow is equivalent to a natural flow of Hermitian metrics on a holomorphic Courant algebroid \cite{JordanStreets,JGFS}.  

Our goal in this paper is to initiate the analytic study of Ricci flow for generalized metrics on Courant algebroids, extending and unifying the prior results described above.  This equation, which we will still call \emph{generalized Ricci flow}, was introduced in \cite{SeveraValach1,GF19}.  As described in detail in \cite{GF19}, it turns out that a generalized metric alone does not determine a unique Levi-Civita connection, nor thus a unique Ricci tensor, as for the cases considered in~\cite{Siegel,CSCW}.  However, by imposing compatibility with a divergence operator, one obtains a class of Levi-Civita connections which has uniquely determined Ricci and scalar curvatures.  Frequently, we will impose that the divergence operator is associated to a smooth function, called the dilaton, which for technical reasons we will often identify with a section $\gs$ of the space of half-densities on the underlying manifold (cf.\ \S \ref{s:Courant}).  Thus one is lead to a natural generalization of the Einstein-Hilbert functional \cite{SeveraValach2}.  By imposing a particular choice of mixed-signature inner product on the space of generalized metrics and half-densities motivated by mathematical physics (cf.\ Definition \ref{d:L2metric}), this functional has formal gradient flow\footnote{This is distinct from the $\gl$-functional monotonicity, detailed below.} given by the generalized Ricci flow:
\begin{gather} \label{f:GRF}
\dt \gm = -2 \grc_{\gm,\gs}, \qquad \dt \gs = - \tfrac{1}{2} \gr_{\gm,\gs} \gs.
\end{gather}
In view of the discussion above, this flow is a natural tool for constructing solutions to supergravity equations, and canonical geometric structures on manifolds.  We note that in a certain sense the flow for $\sigma$ can be gauged away, resulting purely in a flow of generalized metrics, although it is decidedly less natural to do this (cf.\ Remark \ref{r:halfdensitygauge}).

Our first result establishes short-time existence and uniqueness of solutions on compact manifolds:

\begin{thm} \label{t:mainthm1} Given $E \to M$ a Courant algebroid over a compact manifold and $(\gm_0, \gs_0)$ a generalized metric on $E$ and half-density on $M$, there exists $\ge > 0$ and a unique solution to generalized Ricci flow with initial condition $(\gm_0,\gs_0)$ on $[0, \ge)$.
\end{thm}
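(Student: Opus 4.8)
The plan is to follow the classical DeTurck trick, adapted to the Courant algebroid setting. The system \eqref{f:GRF} is not parabolic as written, because the generalized Ricci tensor $\grc_{\gm,\gs}$ is, like the usual Ricci tensor, only degenerate-elliptic: its linearization has a kernel coming from the infinite-dimensional group of Courant automorphisms (and the divergence/dilaton reparametrizations) acting on the data. First I would compute the principal symbol of the linearization of $(\gm,\gs) \mapsto (-2\grc_{\gm,\gs}, -\tfrac12 \gr_{\gm,\gs}\gs)$; one expects that, after fixing a reference generalized metric $\bar\gm$ and reference half-density $\bar\gs$, there is a first-order "gauge" term — the analogue of the Lie derivative along the DeTurck vector field $V = V(\gm,\bar\gm)$ built from the difference of the Levi-Civita-type connections determined by $(\gm,\gs)$ and $(\bar\gm,\bar\gs)$ — whose addition renders the modified flow
\[
  \dt \gm = -2\grc_{\gm,\gs} + \LL_{X_V}\gm, \qquad \dt \gs = -\tfrac12 \gr_{\gm,\gs}\gs + \LL_{X_V}\gs
\]
strictly parabolic in $(\gm,\gs)$. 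Here $X_V$ denotes the generalized vector field on $E$ (a section of $E$, acting via the Dorfman bracket together with its anchor on densities) determined by the DeTurck term. Establishing this parabolicity — identifying the correct gauge vector field in the Courant-algebroid language and checking the symbol is positive definite on the space of symmetric endomorphisms preserving the generalized-metric constraint together with the half-density — is the main obstacle, since the algebra of the generalized connection and its curvature is considerably more involved than in the Riemannian case, and one must be careful that the constraint $\gm^2 = \Id$ is preserved.

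Once parabolicity of the gauge-fixed system is in hand, short-time existence and uniqueness of a solution $(\gm_t,\gs_t)$ to the modified flow on some $[0,\ge)$ follows from standard quasilinear parabolic PDE theory on the compact manifold $M$ (e.g. via the implicit function theorem on parabolic Hölder or Sobolev spaces), noting that the constraint manifold of generalized metrics is preserved because the constraint is preserved infinitesimally by the linearized flow and the flow stays in a neighborhood where $\gm_t$ remains a genuine generalized metric. Next I would solve the associated flow of Courant algebroid automorphisms: let $\phi_t$ be the one-parameter family of automorphisms of $E \to M$ generated by integrating $-X_V(\gm_t,\bar\gm)$, with $\phi_0 = \Id$; since $X_V$ depends on the already-constructed solution, this is a linear (hence globally solvable on $[0,\ge)$) ODE for the automorphism, shrinking $\ge$ if necessary. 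Then $(\gm_t, \gs_t)$ pulled back by $\phi_t$ solves the original generalized Ricci flow \eqref{f:GRF} with initial data $(\gm_0,\gs_0)$, giving existence.

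For uniqueness I would run the argument in reverse, which is the subtler direction: given two solutions $(\gm^{(1)}_t,\gs^{(1)}_t)$ and $(\gm^{(2)}_t,\gs^{(2)}_t)$ of \eqref{f:GRF} with the same initial data, apply to each the "harmonic map heat flow"–type construction — here, solving the parabolic equation for a time-dependent automorphism $\psi^{(i)}_t$ of $E$ that gauge-fixes $(\gm^{(i)}_t,\gs^{(i)}_t)$ into a solution of the strictly parabolic modified flow relative to a common reference. This auxiliary equation is itself parabolic, so it admits a unique short-time solution with $\psi^{(i)}_0 = \Id$; the pulled-back data $(\psi^{(i)}_t)^* (\gm^{(i)}_t,\gs^{(i)}_t)$ then both solve the gauge-fixed parabolic system with the same initial condition, hence agree by parabolic uniqueness, and since the gauge flows $\psi^{(i)}_t$ are determined by the (now equal) solutions of that system together with a common ODE, they too agree; pushing back gives $(\gm^{(1)}_t,\gs^{(1)}_t) = (\gm^{(2)}_t,\gs^{(2)}_t)$. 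Throughout, the only genuinely new ingredients beyond the Riemannian/exact-Courant-algebroid case treated in \cite{Streetsexpent, GRFbook} are: (i) that the relevant notion of "diffeomorphism" is an automorphism of the Courant algebroid, so the gauge group and its generating sections of $E$ must be handled intrinsically; and (ii) the coupling to the half-density $\gs$ via the divergence operator, which contributes lower-order terms that do not affect the symbol computation but must be carried through the fixed-point argument. The compactness of $M$ is used to guarantee completeness of the relevant function spaces and short-time solvability of the ODE for the automorphisms.
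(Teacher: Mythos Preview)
Your existence argument follows the same DeTurck strategy as the paper, with one difference worth noting: the paper changes variables from $(\gm,\gs)$ to $(\gm,\varphi)$ where $\varphi$ is the dilaton (Definition~\ref{d:inducedmetricdilaton}), and shows that the gauge-fixed operator on these variables has upper-triangular principal symbol via the cancellation in Lemma~\ref{l:dillinearization}. Your claim that the $\gs$-coupling contributes only lower-order terms is not quite right \emph{before} DeTurck --- the linearization of $\gr$ contains the second-order metric term $\gm\indices{^\alpha_\gamma}\gm\indices{^\beta_\delta}D_\alpha D_\beta\chi^{\gamma\delta}$ (Proposition~\ref{prop:variation}) --- though after adding the Lie derivative along the Bianchi section $\BB$ this does cancel against the corresponding piece of $\tfrac12(\div_\gs\BB)\gs$, so your variant works once that cancellation is actually checked.

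The genuine gap is in uniqueness. You assert that the harmonic-map-type flow for the Courant automorphism $\psi_t^{(i)}$ is parabolic, but in this setting it is only \emph{degenerate} parabolic: the linearization of the relevant operator on sections is
\[
P[L\,\mathcal H(\Id)](Z_u)\;=\;-\Delta u + \differential\,\div_\gs(\gm u)
\]
(Lemma~\ref{l:Hlinearization}), and the second term is precisely the degeneracy coming from the infinite-dimensional kernel of the map $u\mapsto Z_u$ taking sections to infinitesimal automorphisms --- every exact section $\differential f$ acts trivially (Proposition~\ref{prop:sections_automorphisms}). This phenomenon has no Riemannian analogue, since there vector fields and infinitesimal diffeomorphisms coincide. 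The paper resolves it by recasting the automorphism flow as an equation for sections of the Atiyah algebroid $\at(E)$: the induced map $Y\colon\Gamma(E)\to\Gamma(\at(E))$ annihilates the $\differential$-term, and one obtains $P[L(-Y(\mathcal H(\Id)))](Y(u))\equiv\Delta Y(u)$, which is genuinely elliptic. Without this further gauge-fixing (or an equivalent device) the standard parabolic existence and uniqueness theory does not apply to the automorphism flow, and your uniqueness argument does not close.
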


The proof of short-time existence follows the DeTurck gauge-fixing method.  The generalized Ricci tensor is invariant under the infinite dimensional group of Courant automorphisms.  This invariance renders the Ricci tensor degenerate elliptic, thus rendering the generalized Ricci flow degenerate parabolic.  We show that this degeneracy can be overcome by adding a term tangent to the action of Courant automorphisms, and then pull back the solution to this equation by the relevant family of automorphisms to produce the required solution to generalized Ricci flow.  The uniqueness proof follows Hamilton's approach for Ricci flow which recasts the DeTurck diffeomorphisms as solutions to harmonic map heat flow with respect to the flowing metric.  Here a further subtlety arises as the relevant operator on Courant automorphisms has a degeneracy due to the infinite dimensional space of exact differentials, which have trivial infinitesimal action (cf.\ Proposition \ref{prop:sections_automorphisms}, Lemma \ref{l:Wlinearization}).  We apply a further gauge-fixing argument to overcome this degeneracy to obtain the uniqueness.

As a corollary we obtain that the Courant isometry group of the initial data is preserved along the flow.
\begin{cor} Given $(\gm_t,\gs_t)$ a solution to generalized Ricci flow on a compact manifold, $\Isom(\gm_0) \leqslant \Isom(\gm_t)$.
\end{cor}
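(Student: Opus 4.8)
The plan is to deduce the corollary from the uniqueness half of Theorem~\ref{t:mainthm1} together with the naturality of the generalized Ricci flow under Courant automorphisms --- the very invariance that makes $\grc$ degenerate elliptic and that underlies the DeTurck argument proving Theorem~\ref{t:mainthm1}. Fix $F \in \Isom(\gm_0)$, that is, a Courant automorphism of $E \to M$ covering a diffeomorphism of $M$ and fixing the initial datum in the sense that $F^*\gm_0 = \gm_0$ and $F^*\gs_0 = \gs_0$.

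The first step is to record that all objects appearing in \eqref{f:GRF} are natural under Courant automorphisms. Since the family of Levi--Civita connections compatible with a fixed divergence operator, and hence the tensor $\grc_{\gm,\gs}$ and the scalar $\gr_{\gm,\gs}$, are built canonically from the pair $(\gm,\gs)$, for any Courant automorphism $F$ one has $F^*\grc_{\gm,\gs} = \grc_{F^*\gm,\,F^*\gs}$ and $F^*(\gr_{\gm,\gs}\,\gs) = \gr_{F^*\gm,\,F^*\gs}\,(F^*\gs)$. Applying $F^*$ to the two equations of \eqref{f:GRF} and using that $F$, being independent of $t$, commutes with $\tdt$, we conclude that $t \mapsto (F^*\gm_t, F^*\gs_t)$ is again a solution of generalized Ricci flow on $[0,\ge)$. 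Its initial data is $(F^*\gm_0, F^*\gs_0) = (\gm_0, \gs_0)$, so the uniqueness statement of Theorem~\ref{t:mainthm1} forces $(F^*\gm_t, F^*\gs_t) = (\gm_t, \gs_t)$ for all $t \in [0,\ge)$. In particular $F^*\gm_t = \gm_t$, i.e.\ $F \in \Isom(\gm_t)$; since $F$ was arbitrary, $\Isom(\gm_0) \leqslant \Isom(\gm_t)$. (The same computation yields $F^*\gs_t = \gs_t$, so the stabilizer of the full pair $(\gm_0,\gs_0)$ is likewise preserved.)

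I expect the only point needing care to be the naturality input of the first step, and within it the behavior of the half-density $\gs$: one has to check that the identification of the dilaton/divergence operator with a half-density on $M$ is compatible with pullback by Courant automorphisms, so that $\Isom(\gm_0)$ genuinely acts on the complete datum entering \eqref{f:GRF} and on the curvature quantities built from a compatible Levi--Civita connection. With the conventions of \S\ref{s:Courant} this is immediate, and there is no genuine obstacle --- the corollary is a formal consequence of equivariance plus uniqueness, exactly as for classical Ricci flow.
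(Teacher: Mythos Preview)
Your argument is the standard one --- naturality of $\grc$ and $\gr$ under Courant automorphisms plus the uniqueness half of Theorem~\ref{t:mainthm1} --- and it is exactly the reasoning the paper has in mind when it states this as an immediate corollary without a separate proof.

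One point deserves a flag rather than the parenthetical treatment you give it. You write ``Fix $F \in \Isom(\gm_0)$, that is, \dots\ $F^*\gm_0 = \gm_0$ and $F^*\gs_0 = \gs_0$.'' But the paper's definition of an isometry of $\gm$ (in \S 3) requires only $\gm \circ \Phi = \Phi \circ \gm$; there is no condition on a half-density. Your uniqueness argument genuinely needs $F^*\gs_0 = \gs_0$, since otherwise $(F^*\gm_t, F^*\gs_t)$ has initial data $(\gm_0, F^*\gs_0) \neq (\gm_0,\gs_0)$ and Theorem~\ref{t:mainthm1} does not apply. The prose preceding the corollary speaks of the ``Courant isometry group of the initial data,'' which supports reading $\Isom(\gm_0)$ as shorthand for the stabilizer of the pair $(\gm_0,\gs_0)$; under that reading your proof is complete. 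But you should state this hypothesis explicitly rather than absorbing it into the definition of $\Isom(\gm_0)$, since as written you are proving a slightly weaker statement than the literal one.
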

\noindent Furthermore, the concept of T-duality is compatible with generalized Ricci flow (cf. Remark \ref{r:Tduality}).

To sharpen the analytic theory, we establish a series of monotonicity results for generalized Ricci flow.  These results require a slightly stronger hypothesis on the generalized metric, namely that it is positive definite, a condition we call \emph{strict positivity} (cf.\ Definition \ref{d:genmetric}, Remark \ref{r:strictpositive}).  Our second main result establishes a scalar curvature monotonicity formula for generalized Ricci flow.  In the case of exact Courant algebroids such a formula was recently established \cite{Streetsscalar}, incorporating a further scalar field, corresponding to the physical dilaton.
 
\begin{thm} \label{t:scalarevolutionintro} (cf.\ Theorem \ref{t:scalarcurvmon})
            Let $(\gm_t, \gs_t)$ be a solution to generalized Ricci flow on a compact manifold.  Then for all smooth existence times $t$ one has
            \[\dt\gr= \gD_{\gm,\gs} \gr+\,\vert\!\grc\!\vert_{\gm}^2.\]
\end{thm}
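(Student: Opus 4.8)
The plan is to derive the evolution equation for the generalized scalar curvature $\gr = \gr_{\gm_t, \gs_t}$ by differentiating along the flow and assembling the various pieces, mirroring the classical computation of $\dt R = \Delta R + 2|\Rc|^2$ for Ricci flow, but now carried out in the Courant algebroid setting with the half-density $\gs$ present. First I would recall the first-variation formulas for $\gr$ with respect to independent variations $\delta \gm$ of the generalized metric and $\delta \gs$ of the half-density; these should already be available from the variational characterization of the Einstein--Hilbert-type functional of \cite{SeveraValach2} used to derive \eqref{f:GRF}, and they express $\ds \gr$ as a sum of a term linear in $\delta \gm$ paired against $\grc$, a second-order (Lichnerowicz-type) operator applied to $\delta\gm$, and terms involving $\delta\gs$. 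Substituting the flow equations $\dt\gm = -2\grc_{\gm,\gs}$ and $\dt\gs = -\tfrac12 \gr_{\gm,\gs}\gs$ into these first-variation formulas will produce, schematically, $\dt \gr = (\text{second-order operator applied to } \grc) + |\grc|_\gm^2 + (\text{terms from } \dt\gs)$.

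The key step is to recognize that the second-order operator appearing when one varies $\gm$ in the direction of $\grc$, together with the contribution coming from $\dt\gs = -\tfrac12 \gr \gs$, reorganizes exactly into the weighted Laplacian $\gD_{\gm,\gs}$ acting on $\gr$. Concretely, I expect that the naive linearization gives something like $\dt\gr = \gD_{\gm}\gr + (\text{divergence-of-dilaton correction}) + |\grc|^2_\gm$, and that the $\dt\gs$ term supplies precisely the first-order drift term that upgrades $\gD_\gm$ to $\gD_{\gm,\gs}$; this is the Courant-algebroid analogue of the fact that in the exact case one tracks a coupled dilaton so that the "twice" in $2|\Rc|^2$ is absorbed and the drift Laplacian appears (cf.\ \cite{Streetsscalar}). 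A clean way to organize this is to use the second Bianchi-type identity for the Levi--Civita connections compatible with the divergence operator, which relates the divergence of $\grc$ to the differential of $\gr$; contracting the linearized equation against this identity is what turns the a priori fourth-order-looking terms into the Laplacian of $\gr$.

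The main obstacle, and where I would spend the most care, is bookkeeping the divergence operator / dilaton dependence correctly. Unlike classical Ricci flow, here $\grc_{\gm,\gs}$ and $\gr_{\gm,\gs}$ depend on $\gs$ through the choice of divergence operator, so when differentiating in $t$ one must track both the explicit $t$-dependence through $\gm_t$ \emph{and} the implicit dependence through $\gs_t$ in every curvature quantity; missing a cross-term here is exactly what would spoil the coefficient of $|\grc|_\gm^2$ or leave a spurious zeroth-order term. I would handle this by first establishing the two partial first-variation formulas ($\partial_{\gm}\gr$ and $\partial_{\gs}\gr$) as separate lemmas, verifying them against the known exact-Courant-algebroid case of \cite{Streetsscalar} as a consistency check, and only then combining them and invoking the Bianchi identity to collapse the second-order terms. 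A secondary, more technical point is checking that all integrations-by-parts and the self-adjointness needed to identify the operator as $\gD_{\gm,\gs}$ are valid pointwise (the statement is an equation, not just an integrated inequality), which should follow from the local formulas for $\grc$ and $\gr$ on a Courant algebroid but needs to be stated with the half-density conventions fixed in \S\ref{s:Courant}.
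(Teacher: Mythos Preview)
Your approach is essentially the same as the paper's: compute $\dt\gr$ via the first-variation formula for $\gr$ (Proposition~\ref{prop:variation}), substitute $\chi=-2\grc$ and $\varepsilon=-\differential\gr$ (the latter from $\dt\gs=-\tfrac12\gr\gs$ via Lemma~\ref{lem:vary_divergence}), and then apply the contracted Bianchi identity (Proposition~\ref{prop:second_contracted_bianchi}) to turn the double-divergence of $\grc$ into $\tfrac12\Delta\gr$.

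One small correction to your heuristic: the $\gs$-variation does not supply a first-order drift that ``upgrades $\gD_\gm$ to $\gD_{\gm,\gs}$.'' In this framework the scalar Laplacian is already $\gD_{\gm,\gs}$ by definition, since any Levi-Civita connection here has divergence $\div_\gs$. What actually happens is that the $\varepsilon$-term in the variation of $\gr$ gives $-2\gm^{\alpha\beta}D_\alpha\varepsilon_\beta = +2\gD_{\gm,\gs}\gr$, while the $\chi$-term gives $-2\gm\indices{^\alpha_\gamma}\gm\indices{^\beta_\delta}D_\alpha D_\beta\grc^{\gamma\delta}$, which Bianchi turns into $-\gD_{\gm,\gs}\gr$; the two combine to a single $\gD_{\gm,\gs}\gr$. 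Your worry about pointwise versus integrated is also unnecessary: the variation formula in Proposition~\ref{prop:variation} is already a pointwise identity.
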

\noindent An immediate corollary is that a lower bound on scalar curvature is preserved for generalized Ricci flows with strictly positive initial data (cf.\ Corollary \ref{c:scalarlb}).  

Another fundamental feature of the Ricci flow equation is that it is a gradient flow \cite{Perelman1}.  Perelman's discovery of this gradient flow property exploits the $\mathcal F$-functional, which is a special case of the Einstein-Hilbert functional/string action described above.  This functional naturally incorporates a scalar weight $e^{-f}$, which corresponds here to an arbitrary choice of positive half-density.  The general definition extending Perelman's $\gl$ functional is then
\begin{align*}
\gl(\gm) := \inf_{ \{ \gs \in \ms H^* \mid\, \int \gs^2 = 1 \}} \eh(\gm, \gs),
\end{align*}
where $\ms H^*$ is the space of positive half-densities.  Our next main result is then:
\begin{thm} \label{t:gradientintro} (cf.\ Theorem \ref{t:gradient})
  For strictly positive metrics, generalized Ricci flow is the gradient flow of $\gl$.  More precisely, given $(\gm_t, \gs_t)$ a solution to generalized Ricci flow with strictly positive initial data over a compact manifold $M$, for any smooth existence times $t_1 < t_2$ one has $\gl(\gm_{t_1}) \leq \gl(\gm_{t_2})$.  Furthermore, equality holds if and only if $\gm_{t}$ is a soliton.
\end{thm}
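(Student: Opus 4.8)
I plan to mimic Perelman's proof of monotonicity of the $\gl$-functional along Ricci flow, with the half-density $\gs$ playing the role of the auxiliary weight $e^{-f}$. The first step is a variational description of $\gl(\gm)$ for a fixed strictly positive $\gm$. Choosing a reference positive half-density $\mu$ and writing $\gs = u\,\mu$ with $u > 0$, an integration by parts turns $u \mapsto \eh(\gm, u\mu)$ into a Schr\"odinger-type form $\eh(\gm, u\mu) = \int\big(4\,\brs{\N u}^2_{\gm} + V_{\gm}\,u^2\big)\mu^2$ for a smooth potential $V_{\gm}$ built from $\gm$ and $\mu$, with the norm determined by $\gm$; here strict positivity of $\gm$ is exactly what makes this a positive-definite Dirichlet form and $\gD_{\gm}$ a genuine elliptic Laplacian, so that $\gl(\gm)$ is the lowest eigenvalue of the self-adjoint operator $-4\gD_{\gm} + V_{\gm}$. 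By the direct method the infimum is attained, and by elliptic regularity together with the strong maximum principle (Perron--Frobenius) the minimizer $\gs_{\gm}$ is smooth, strictly positive, $L^2$-normalized and unique; re-expressed invariantly, its Euler--Lagrange equation forces $\gr_{\gm,\gs_{\gm}}$ to be the constant $\gl(\gm)$, and pairing against $\gs_{\gm}$ recovers $\gl(\gm) = \eh(\gm,\gs_{\gm})$.

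Next, fix smooth existence times $t_1 < t_2$ for a solution $(\gm_t,\gs_t)$ of \eqref{f:GRF} with strictly positive initial data (strict positivity being maintained along the flow), and let $\bar\gs := \gs_{\gm_{t_2}}$ be the minimizer above at time $t_2$. I would define $\gs(t)$ on $[t_1,t_2]$ to be the solution of the \emph{conjugate dilaton equation} --- the conjugate of the dilaton evolution in \eqref{f:GRF}, which is a backward linear parabolic equation, normalized by a time-dependent scalar (Lagrange) multiplier so that $\int\gs(t)^2 = 1$ for all $t$ --- with terminal value $\gs(t_2) = \bar\gs$. In the reversed time $\tau = t_2 - t$ this is a genuine forward linear parabolic equation with smooth coefficients, hence uniquely solvable on $[t_1,t_2]$, and after the substitution $\gs = u\mu$ the maximum principle keeps $u > 0$; thus $\gs(t) \in \ms H^*$ with unit $L^2$-norm for each $t \in [t_1,t_2]$. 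As in Perelman's theory, $(\gm_t,\gs(t))$ differs from $(\gm_t,\gs_t)$ only by a family of Courant automorphisms --- infinitesimally, the action of the ``gradient'' of $\log(\gs(t)/\gs_t)$, cf.\ Proposition \ref{prop:sections_automorphisms} --- composed with a scalar rescaling; this will be needed only to interpret the equality case.

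The heart of the argument is the computation of $\tdt\,\eh(\gm_t,\gs(t))$. Differentiating and inserting the first-variation formulas for the generalized Einstein--Hilbert functional (the $\gm$-derivative being governed by $\grc_{\gm,\gs}$ and the $\gs$-derivative by $\gr_{\gm,\gs}\gs$, as encoded in \eqref{f:GRF}), together with the fact that the constraint $\int\gs(t)^2 \equiv 1$ annihilates the principal part of the $\gs$-direction variation, and organizing the remaining cross and second-order terms via the generalized contracted Bianchi identity (equivalently, Theorem \ref{t:scalarcurvmon}), all contributions should assemble into a perfect square,
\[
\dt\, \eh(\gm_t,\gs(t)) \;=\; 2\int \brs{\fgrc_{\gm_t,\gs(t)}}^2_{\gm_t}\,\gs(t)^2 \;\geq\; 0,
\]
where $\fgrc_{\gm,\gs}$ is the generalized Ricci tensor for the divergence associated to $\gs$, modified by precisely the infinitesimal-automorphism term distinguishing the conjugate dilaton flow from the $\gs$-evolution in \eqref{f:GRF} --- its vanishing being the generalized Ricci soliton equation, and strict positivity of $\gm$ guaranteeing $\brs{\fgrc}^2_{\gm} \geq 0$. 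Granting this, the theorem follows: $\gl(\gm_{t_1}) \le \eh(\gm_{t_1},\gs(t_1)) \le \eh(\gm_{t_2},\gs(t_2)) = \eh(\gm_{t_2},\bar\gs) = \gl(\gm_{t_2})$, using in turn that $\gs(t_1) \in \ms H^*$ is normalized, the displayed monotonicity, and the choice of $\bar\gs$. In particular $\gl(\gm_t)$ is non-decreasing, which is the precise sense in which generalized Ricci flow is the gradient flow of $\gl$.

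Finally, if $\gl(\gm_{t_1}) = \gl(\gm_{t_2})$ then every inequality above is an equality, so $\fgrc_{\gm_t,\gs(t)} \equiv 0$ on $[t_1,t_2]$; undoing the automorphism and rescaling gauge, this says $(\gm_t,\gs_t)$ solves the generalized Ricci soliton equation, i.e.\ $\gm_t$ is a soliton, and by uniqueness of solutions this then holds on the entire interval of existence. Conversely a soliton solution evolves only by Courant automorphisms and a scaling, under which $\eh$ --- hence $\gl$ --- is invariant, so $\gl(\gm_t)$ is constant. I expect the genuine obstacle to be the displayed identity: confirming that, \emph{with the conjugate dilaton flow fixed as the gauge for $\gs$}, the derivative $\dt\,\eh$ is exactly the complete square $2\int\brs{\fgrc}^2_{\gm}\gs^2$ rather than an indefinite combination of curvature and dilaton terms. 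This is the Courant-algebroid counterpart of Perelman's identity $\tdt\mathcal F = 2\int\brs{\Rc + \N^2 f}^2 e^{-f}$, and it will require the precise first-variation formulas for $\eh$ in both of its arguments together with the generalized Bianchi identities; by comparison, the backward well-posedness, the positivity and normalization of $\gs(t)$, and the identification of the automorphism gauge should be routine given the machinery behind Theorems \ref{t:mainthm1} and \ref{t:scalarcurvmon}.
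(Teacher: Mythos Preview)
Your proposal is correct and follows essentially the same strategy as the paper: fix the minimizing half-density at the terminal time $t_2$, propagate it backward via the conjugate equation, and show $\eh(\gm_t,\cdot)$ is nondecreasing along this auxiliary path.  A few differences in packaging are worth noting.  First, the paper works with $u=e^{-f}$ solving the conjugate heat equation $\square^*_{\gs}u=0$ (where $\square^*_{\gs}=-\partial_t-\Delta_{\gm,\gs}+\gr_{\gm,\gs}$), and this equation is designed so that $\int u\,\gs^2$ is automatically conserved; your time-dependent Lagrange multiplier is therefore unnecessary and would only complicate the computation.  Second, the ``complete square'' identity you flag as the main obstacle is established in the paper not by a direct first-variation computation but via a steady Harnack estimate: one applies the time-dependent gauge generated by $\gm\differential f$, under which the modified half-density $\gs^f=e^{-f/2}\gs$ becomes stationary, and the identity then drops out immediately from the scalar-curvature variation formula and the generalized Bianchi identity.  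Third, beware that in the paper $\fgrc$ denotes the \emph{full} (unprojected) Ricci tensor, not the divergence-shifted one; what appears in the monotonicity formula is $\grc(\gm,\gs^f)$, the ordinary generalized Ricci tensor taken with respect to the modified half-density, which by Proposition~\ref{p:Riccidivergencechange} equals $\grc(\gm,\gs)+\tfrac12\mc L_{\gm\differential f}\gm$ and thus already encodes the soliton modification you describe.
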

\noindent Theorem \ref{t:gradientintro} was established for Ricci flow in \cite{Perelman1}, and was extended to the generalized Ricci flow (\ref{f:exactGRF}) in \cite{OSW}.  Insofar as generalized Ricci flow corresponds to physical renormalization group flow, this result is a concrete manifestation of Zamolodchikov's $c$-theorem \cite{zamolodchikov}.

In fact Theorem \ref{t:gradientintro} is a consequence of a more general differential inequality for the weighted scalar curvature arising from a solution to the relevant conjugate heat equation.  More precisely, associated to a solution of generalized Ricci flow we have a time-dependent scalar heat operator $\square = \dt - \gD_{\gm,\gs}$, where the Laplace operator is naturally associated to the data $\gm, \gs$, and turns out to be the weighted Laplacian associated to the underlying classic Riemannian metric $g$ and the dilaton (cf.\ Lemma \ref{l:weightedlaplacian}).  Furthermore we obtain a conjugate heat operator $\square^*_{\gs}$ relative to the measure $\gs^2$, which turns out to be the backwards heat operator with reaction term $\gr_{\gm,\gs}$.  Given a solution $u = e^{-f}$ of the conjugate heat equation $\square^*_{\gs} u = 0$, it then turns out that
\begin{align} \label{f:introharnack}
    \square^*_{\gs} \left( \gr(\gm, \gs^f) u \right) =&\ - \brs{\grc(\gm, \gs^f)}^2_{\gm} u \leq 0.
\end{align}
where as above $\gs^f = e^{-f/2} \gs$ (cf.\ Proposition \ref{p:steadyharnack}).  By a standard argument one obtains the monotonicity of $\gl$ from this equation.  

Perelman's derivation of the $\gl$-monotonicity introduced many new ideas into the study of Ricci flow such as the weighted Ricci and scalar curvatures, conjugate heat operator, etc., and brilliantly combined them to derive the key differential inequality above.  In \cite{Streetsscalar} the first author showed an extension of this differential inequality for generalized Ricci flow on exact Courant algebroids (\ref{f:exactGRF}), crucially using the coupling to the dilaton flow.  Surprisingly, these subtle constructions are quite natural in the language of generalized geometry on Courant algebroids, leading to a simple proof of (\ref{f:introharnack}).  In particular, to even define the Ricci tensor requires a divergence operator, represented here by the half-density $\gs$.  The evolution of $\gs$, derived by general principle from the action as described above, induces the key dilaton flow used in \cite{Streetsscalar}, and derived from physical principles \cite{Polchinski}.  Modifying this divergence operator, i.e.\ weighting the half-density by $e^{-f/2}$, naturally corresponds to modifying the Ricci tensor by a generalized Lie derivative (Proposition \ref{p:Riccidivergencechange}).  If $u = e^{-f}$ is a solution of the conjugate heat equation, and we gauge-fix the flow to remove this Lie derivative term, it follows that the half-density $\gs^f$ is fixed.  Using this, the variation formula for the generalized scalar curvature with arbitrary divergence operator, as well as the general differential Bianchi identity for Ricci and scalar, again with arbitrary divergence operator (Proposition \ref{prop:second_contracted_bianchi}), then yields a very simple proof of (\ref{f:introharnack}).

As a final corollary of the $\gl$-monotonicity we obtain that global solutions to generalized Ricci flow which are nonsingular in a certain sense (cf.\ Definition \ref{d:nonsingular}) converge to solitons.
\begin{cor} \label{c:convcor} (cf.\ Corollary \ref{c:convcor2}) Every nonsingular solution to generalized Ricci flow with strictly positive initial data converges subsequentially to a soliton.
\end{cor}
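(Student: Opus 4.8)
The plan is to run Perelman's compactness argument for nonsingular flows, with the $\gl$-monotonicity of Theorem~\ref{t:gradientintro} playing the role of the entropy monotonicity. Let $(\gm_t,\gs_t)$ be a nonsingular solution to generalized Ricci flow on $[0,\infty)$ with strictly positive initial data. By Theorem~\ref{t:gradientintro}, $t \mapsto \gl(\gm_t)$ is nondecreasing. The nonsingularity hypothesis of Definition~\ref{d:nonsingular} supplies uniform control of the underlying geometry of the flowing generalized metrics --- in particular uniform bounds on curvature (hence, via Theorem~\ref{t:scalarevolutionintro} and Corollary~\ref{c:scalarlb}, uniform two-sided control of the generalized scalar curvature) and a uniform lower injectivity radius bound for the underlying Riemannian metrics $g_t$. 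These in turn bound $\gl(\gm_t)$ from above, so $\gl(\gm_t)$ increases to a finite limit $\gl_\infty$, and $\gl(\gm_{t_2}) - \gl(\gm_{t_1}) \to 0$ as $t_1, t_2 \to \infty$ with $t_1 < t_2$.

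First I would fix a sequence $t_i \to \infty$ and consider the time-translated flows $(\gm^i_t, \gs^i_t) := (\gm_{t+t_i}, \gs_{t+t_i})$ on $[-1,1]$. Applying the compactness theorem for generalized Ricci flow under uniformly bounded geometry --- after passing to a subsequence and pulling back by an appropriate family of Courant automorphisms covering diffeomorphisms (and, in the noncompact setting, choosing basepoints) --- I obtain a limit generalized Ricci flow $(\gm^\infty_t, \gs^\infty_t)$ on $[-1,1]$ to which the $(\gm^i_t, \gs^i_t)$ converge in $C^\infty_{\mathrm{loc}}$.

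Next I would show that $\gl$ is constant along the limit flow. Since $\gl$ is invariant under Courant automorphisms and is continuous under $C^\infty_{\mathrm{loc}}$ convergence of the data (this last point being the crux; see below), for each $t \in [-1,1]$ one has $\gl(\gm^\infty_t) = \lim_i \gl(\gm^i_t) = \lim_i \gl(\gm_{t+t_i}) = \gl_\infty$, which is independent of $t$. The equality characterization in Theorem~\ref{t:gradientintro} then forces $\gm^\infty_t$ to be a soliton; taking $t=0$ shows that $(\gm_{t_i}, \gs_{t_i})$ converges, modulo Courant automorphisms, to a soliton, which is the assertion of Corollary~\ref{c:convcor2}.

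The main obstacle is the compactness/continuity step: extracting a smooth limit flow from the nonsingularity bounds, and above all verifying that $\gl$ passes to the limit. The difficulty is that $\gl(\gm) = \inf\{ \eh(\gm,\gs) : \gs \in \ms H^*,\ \int \gs^2 = 1\}$ is an infimum over positive half-densities, so one must control the minimizers uniformly: a minimizer $\gs$ satisfies a Schr\"odinger-type eigenvalue equation for the weighted Laplacian $\gD_{\gm,\gs}$ of Lemma~\ref{l:weightedlaplacian}, and the uniform geometry bounds (together with the scalar curvature control from Corollary~\ref{c:scalarlb}) are precisely what is needed to obtain uniform elliptic estimates for this equation and to rule out loss of $L^2$ mass or concentration in the limit. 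This is the direct analogue of Perelman's analysis of the $\gl$-functional minimizer, transplanted to the Courant algebroid setting, and I expect it to be the most technical part of the argument.
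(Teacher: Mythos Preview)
Your approach is broadly sound but is built around a misreading of Definition~\ref{d:nonsingular}. In this paper, ``nonsingular'' is not the usual Ricci-flow hypothesis of uniformly bounded curvature and injectivity radius; it is the \emph{much stronger} assumption that every sequence $t_i\to\infty$ already admits a subsequence along which $(\Phi_{t_i}^*\gm_{t_i},\,\alpha_{t_i}\Phi_{t_i}^*\gs_{t_i})$ converges. So the compactness you are working hard to obtain---extracting a limit flow on $[-1,1]$ via a ``compactness theorem for generalized Ricci flow''---is simply part of the hypothesis, and no such theorem (which the paper does not provide) is needed. There is also no noncompact setting here: the entire paper works over a fixed compact base $M$.

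The paper's proof is correspondingly more direct. From nonsingularity one gets that all automorphism-invariant quantities are bounded; together with the monotonicity of $\gl$ this gives $\tfrac{d}{dt}\gl\to 0$. One then picks a subsequential limit $(\gm_\infty,\gs_\infty)$ (handed to you by the definition) and passes the vanishing of $\tfrac{d}{dt}\gl = \int_M |\grc(\gm,\gs^f)|^2_{\gm}\,u\,\gs^2$ through the limit to conclude $\grc(\gm_\infty,\cdot)=0$, i.e.\ $\gm_\infty$ is a soliton. Your route via limit flows and the equality case of Theorem~\ref{t:gradientintro} would also work, but it trades this one-line argument for machinery (flow compactness, continuity of $\gl$ under Cheeger--Gromov-type convergence) that you would have to develop from scratch. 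The one technical point you correctly identify---uniform control of the $\gl$-minimizers to pass to the limit---is present in both arguments; the paper's proof absorbs it into the phrase ``it follows from the remarks above.''
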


\textbf{Acknowledgements:} JS was supported by the NSF via DMS-2203536 and by a Simons Fellowship. CS-C and FV are supported by EPSRC grant EP/X014959/1. FV was also supported by the Postdoc Mobility grant P500PT\_203123 of the Swiss National Science Foundation.  The authors thank Andr\'e Coimbra for earlier collaboration on the linearisation of curvature tensors in generalized geometry. We also thank Mario Garcia-Fernandez for comments on an earlier draft of this paper, and Daniel Waldram and Pavol \v Severa for helpful discussions. The authors also thank the organisers of the ``Supergravity, Generalized Geometry and Ricci Flow'' programme at the Simons Center for Geometry and Physics at which some of the research for this paper was performed.

\section{Courant algebroids} \label{s:Courant}

In this section we recall fundamental aspects of the theory of Courant algebroids.  We starting  by recalling the basic definition \cite{LWX} and the notion of a Courant algebroid automorphism.  We then recall basic aspects of the theory of Courant algebroid automorphisms and generalized Lie derivatives, following \cite{AXu, GualtieriBranes, GF14,SeveraPLTDuality}.

\subsection{Definition and first examples}

\begin{defn}\label{def:courant}
    A \emph{Courant algebroid} is a vector bundle $E\to M$ equipped with:
    \begin{itemize}
        \item an $\mb R$-bilinear bracket $[\cdot,\cdot]\colon \Gamma(E)\times \Gamma(E)\to\Gamma(E)$,
        \item a fiberwise non-degenerate symmetric bilinear form $\langle \cdot,\cdot\rangle$,
        \item a vector bundle map (the \emph{anchor}) $\rho\colon E\to TM$,
    \end{itemize}
    satisfying the following properties for all $u,v,w\in\Gamma(E)$, $f\in C^\infty(M)$:
    \begin{enumerate}
        \item $[u,[v,w]]=[[u,v],w]+[v,[u,w]]$,
        \item $[u,fv]=f[u,v]+ (\rho(u)f)v$,
        \item $\rho(u)\langle v,w\rangle=\langle [u,v],w\rangle+\langle v,[u,w]\rangle$,
        \item $[u,v]+[v,u]=\differential\langle u,v\rangle$.
    \end{enumerate}
    where $\differential:=\rho^T\circ d\colon C^\infty(M)\to \Gamma(E^*)$ and we have identified $E\cong E^*$ via the pairing.
\end{defn}
Combining the first two axioms, it is easy to show that the anchor intertwines the brackets on $E$ and $TM$, i.e.
\[\rho([u,v])=[\rho(u),\rho(v)].\]
Applying this property to the last axiom we obtain that $\rho\circ \rho^T=0$, or in other words, we have a chain complex
\begin{align}\label{eq:chain}
    0\to T^*M\xrightarrow{\rho^T}E\xrightarrow{\rho}TM\to 0.
\end{align}
Also, setting $v=u$ in the first axiom, we get \[[\differential f,w]=0.\]
\begin{defn} \label{d:tCA}
    If \eqref{eq:chain} is an exact sequence, the Courant algebroid is called \emph{exact}. More generally, if $\rho$ is surjective, the algebroid is called \emph{transitive}.
\end{defn}
\begin{ex}\label{ex:exact}
    Let $H\in \Omega^3(M)$ be a closed 3-form. We then have an exact Courant algebroid structure on $E=TM\oplus T^*M$ given by
    \[\langle X+\alpha,Y+\beta\rangle=\alpha(Y)+\beta(X),\quad [X+\alpha,Y+\beta]=L_X Y+(L_X \beta-i_Y d\alpha+H(X,Y,\cdot)).\]
    Conversely \cite{Severa}, for every exact Courant algebroid there exists a (non-canonical) splitting of \eqref{eq:chain} which results in a pairing and bracket of this form for some $H$. A change of splitting corresponds to the shift of $H$ by an exact 3-form, yielding the classification of exact Courant algebroids in terms of $H^3(M)$.
\end{ex}
\begin{ex}\label{ex:trivial}
    A Courant algebroid over $M=\text{point}$ is the same as a \emph{quadratic Lie algebra}, i.e.\ a Lie algebra with an invariant non-degenerate symmetric bilinear form. (We have $\rho=0$.)
\end{ex}

\subsection{Courant algebroid automorphisms} \label{ss:CAA}

Let $E\to M$ be a Courant algebroid.  Suppose that $\Phi$ is a vector bundle automorphism of $E$, covering a diffeomorphism of the base $M$, which we shall denote by $\bar \Phi$. Then $\Phi$ has a well-defined action on the sections of $E$, given by
\[\Phi^* u:=\Phi^{-1}\circ u \circ \bar\Phi.\]

\begin{defn}[\cite{BursztynCavalcantiGualtieri}]
    A \emph{Courant algebroid automorphism} is a vector bundle automorphism $\Phi$ of $E$, covering a diffeomorphism $\bar\Phi$ of $M$ such that for all $u,v\in\Gamma(E)$ we have
    \[\langle \Phi^* u,\Phi^*v\rangle=\bar\Phi^{\,*}\langle u,v\rangle,\qquad \rho(\Phi^*u)=\bar\Phi_*\rho(u),\qquad \Phi^*[u,v]=[\Phi^*u,\Phi^*v].\]
    We will denote the group of Courant algebroid automorphisms by $\Aut(E)$.
\end{defn}

To better understand the group of automorphisms, we derive basic results on its infinitesimal structure.  To start, first note that any Courant algebroid $E\to M$ comes with a natural action of its sections on both sections of $E$ and smooth functions via 
  \begin{align*}
    \mc L_u v := [u, v], \qquad \mc L_u f := \rho(u) f.
  \end{align*}
  By the Leibniz rule one can thus define an action of $\mc L_u$ on arbitrary sections of tensor products of $E$ and $E^*$.  However, there is a more general notion of Lie derivative on Courant algebroids which is necessary for our purposes.  In particular, let $Z \in \mf X(E)$ be a vector field covering a vector field $\bar Z\in\mf X(M)$, which induces a one-parameter family of vector bundle automorphisms $\Phi_s$ of $E$. We can then define the \emph{generalized Lie derivative} $\mc L_Z$ by
\[\mc L_Zu:=\left.\tfrac{d}{ds}\right\rvert_{s=0}\Phi^*_s u,\qquad \mc L_Zf:=L_{\bar Z}f\qquad \forall u\in\Gamma(E),f\in C^\infty(M),\]
and again extend to tensor products via the Leibniz rule.  

\begin{defn}
    An \emph{infinitesimal Courant algebroid automorphism} is a vector field $Z\in\mf X(E)$ covering a vector field $\bar Z\in\mf X(M)$ and integrating to a vector bundle automorphism, such that for all $u,v\in\Gamma(E)$ we have
    \[\bar Z\langle u,v\rangle = \langle \mc L_Z u,v\rangle+\langle u,\mc L_Z v\rangle,\qquad \rho(\mc L_Z u)=L_{\bar Z}\rho(u),\qquad \mc L_Z[u,v]=[\mc L_Z u,v]+[u,\mc L_Z v].\]
    The Lie algebra, with the usual bracket of vector fields, of infinitesimal Courant algebroid automorphisms will be denoted $\mf{aut}(E)$.
\end{defn}

The two apparently distinct notions of Lie derivative are linked via the following proposition:

\begin{prop}[\cite{SeveraPLTDuality}]\label{prop:sections_automorphisms}
    Any section $u$ of a Courant algebroid induces a unique infinitesimal automorphism $Z_u$ which satisfies 
    \begin{enumerate}
        \item $\bar Z_u=\rho(u)$,
        \item $\mc L_{Z_u}v=[u,v]=\mc L_uv$,
        \item $[Z_u, Z_v] = Z_{[u,v]}$ and $u \mapsto Z_u$ is $\mathbb R$-linear,
        \item $Z_{\differential f} = 0$ for all $f \in C^{\infty}(M)$.
    \end{enumerate}
     
\end{prop}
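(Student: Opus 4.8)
The plan is to exploit the standard dictionary between \emph{linear} vector fields on the total space of a vector bundle --- those whose (local) flow consists of vector bundle automorphisms --- and derivations of the bundle. Recall that such a $Z \in \mf X(E)$, covering $\bar Z \in \mf X(M)$, is equivalent to the pair consisting of $\bar Z$ together with the $\mb R$-linear operator $D = \mc L_Z \colon \Gamma(E) \to \Gamma(E)$, which necessarily obeys the Leibniz rule $D(fv) = fD(v) + (\bar Z f) v$; conversely, every pair $(\bar Z, D)$ satisfying this Leibniz rule integrates to a unique such linear vector field (globally when $M$ is compact, and locally in general, which already suffices for the definition of an infinitesimal automorphism). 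In particular a linear vector field is determined by its base field and its induced derivation on $\Gamma(E)$, and $\mb R$-linear combinations and Lie brackets of linear vector fields are again linear.

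First I would observe that axiom (2) of Definition \ref{def:courant} says precisely that the operator $D_u := [u, \cdot\,]$, paired with the base field $\rho(u)$, satisfies this Leibniz rule; hence it determines a unique linear vector field $Z_u$ with $\bar Z_u = \rho(u)$ and $\mc L_{Z_u} v = [u,v]$, which is exactly (1) and (2). Next I would verify $Z_u \in \mf{aut}(E)$ by checking the three compatibility conditions in the definition of an infinitesimal Courant automorphism: the inner-product condition $\rho(u)\la{v,w} = \la{\mc L_{Z_u}v,\, w} + \la{v,\, \mc L_{Z_u}w}$ is axiom (3); the anchor condition $\rho(\mc L_{Z_u}v) = L_{\rho(u)}\rho(v)$ is the identity $\rho([u,v]) = [\rho(u),\rho(v)]$ recorded just after Definition \ref{def:courant}; and the bracket condition $\mc L_{Z_u}[v,w]=[\mc L_{Z_u}v,w]+[v,\mc L_{Z_u}w]$ is axiom (1). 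Uniqueness then follows from the dictionary: any infinitesimal automorphism $Z$ with $\bar Z = \rho(u)$ and $\mc L_Z v = [u,v]$ for all $v$ has the same base field and the same induced derivation as $Z_u$, hence $Z = Z_u$.

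It remains to read off (3) and (4) from the same uniqueness principle. For $\mb R$-linearity, $aZ_u + bZ_v$ is a linear vector field with base field $\rho(au+bv)$ and induced derivation $aD_u + bD_v = D_{au+bv}$, so it coincides with $Z_{au+bv}$. For the bracket, $[Z_u, Z_v]$ is a linear vector field with base field $[\rho(u),\rho(v)] = \rho([u,v])$, and since $\mc L_{[Z_u,Z_v]} = [\mc L_{Z_u}, \mc L_{Z_v}]$ its induced derivation sends $w$ to $[u,[v,w]] - [v,[u,w]] = [[u,v],w]$ by axiom (1), i.e.\ it equals $D_{[u,v]}$; hence $[Z_u,Z_v] = Z_{[u,v]}$. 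For (4), the excerpt records $[\differential f, w] = 0$ for all $w$ and $\rho \circ \rho^T = 0$, whence $\rho(\differential f) = 0$; thus $Z_{\differential f}$ has vanishing base field and vanishing induced derivation, so $Z_{\differential f} = 0$.

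The only step needing genuine care is the first paragraph: setting up the bijection between linear vector fields on $E$ and derivations with the right sign conventions, and in particular the claim that a derivation integrates (at least locally, which is all that is needed here) to a linear vector field. This is a standard fact about vector bundles, and once it is granted everything else is a direct transcription of the Courant algebroid axioms.
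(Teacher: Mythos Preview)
Your proof is correct and complete. It differs from the paper's argument in presentation: the paper gives an explicit pointwise construction of $Z_u$ (for $p\in E$, choose a local section $v$ through $p$ and a curve $\gamma$ in $M$ with $\dot\gamma_0=\rho(u)$, then set $Z_u|_p$ to be the derivative of $\epsilon\mapsto (v-\epsilon[u,v])_{\gamma(\epsilon)}$ at $\epsilon=0$), then asserts that properties (1) and (2) hold by construction, leaves (3) as an exercise, and notes (4) follows from $[\differential f,w]=0$. Your approach instead invokes the standard dictionary between linear vector fields on $E$ and derivations of $\Gamma(E)$, which is exactly what the paper's curve construction is implementing by hand. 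What you gain is a cleaner uniqueness statement and a systematic verification of all four properties, including the check that $Z_u$ really is an infinitesimal Courant automorphism (which the paper does not spell out). What the paper's version gains is an explicit geometric picture of $Z_u$ (cf.\ Figure~\ref{f:figure1}) that makes the object more tangible without appealing to the abstract correspondence.
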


\begin{proof}
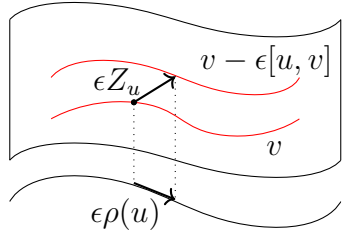
\begin{figure} 
    \begin{tikzpicture}[scale=1.1] 
    \draw plot [smooth, tension=1] coordinates {(0,0) (1,.3) (3,-.3) (4,0)};
    \draw plot [smooth, tension=1] coordinates {(0,0.5) (1,.7) (3,.3) (4,.5)}; \draw (4,.5) -- (4,2.2);
    \draw plot [smooth, tension=1] coordinates {(0,2.2) (1,2.4) (3,2) (4,2.2)}; \draw (0,2.2) -- (0,.5);
    \draw [red] plot [smooth, tension=1] coordinates {(.5,1) (1.5,1.2) (2.6,.8) (3.5,1)};
    \draw [red] plot [smooth, tension=1] coordinates {(.5,1.5) (1.3,1.7) (2.8,1.3) (3.5,1.5)};
    \fill (1.5,1.2) circle[radius=1pt]; \fill (1.5,1.2) circle[radius=1pt]; \draw[thick,->] (1.5,1.2) -- (2,1.51);
    \draw[dotted] (1.5,1.2) -- (1.5,0.2); \draw[dotted] (2,1.52) -- (2,0); \draw[thick,->] (1.5,0.225) -- (2,0.025);
    \node at (3.2,.65) {$v$}; \node at (3.1,1.7) {$v-\epsilon [u,v]$}; \node at (1.3,1.45) {$\epsilon Z_u$}; \node at (1.4,-.15) {$\epsilon \rho(u)$};
  \end{tikzpicture}
  \caption{Construction of $Z_u$}
  \label{f:figure1}
\end{figure}
We include the construction of $Z_u$ for reference (cf.\ Figure \ref{f:figure1}). For a fixed Courant algebroid $\pi\colon E\to M$, a section $u\in\Gamma(E)$ and a point $p\in E$, choose an auxiliary local section $v$ which passes through $p$, and a curve $\gamma\colon [0,1]\to M$ with $\gamma_0=\pi(p)$ and $\dot\gamma_0=\rho(u).$ The vector $Z_u$ at point $p$ is then given as the derivative of the curve
$(v-\epsilon [u,v])_{\gamma(\epsilon)}$ at zero.  One easily checks that $Z_u$ does not depend on the choice of the auxiliary section or the curve $\gamma$.  The first two properties hold by construction, and the third property is an exercise.  The fourth property holds using that $[\differential f, u] = 0$.
\end{proof}

\subsection{Courant algebroid connections}\label{subsec:cacon}

\begin{defn}
    A \emph{Courant algebroid connection} is a map $D\colon \Gamma(E)\times\Gamma(E)\to\Gamma(E)$, written as $D_u v:=D(u,v)$, such that for all $u,v\in \Gamma(E)$, $f\in C^\infty(M)$ we have
    \[D_{fu}v=fD_uv,\qquad D_u(fv)=fD_uv+(\rho(u)f)v\]
    and which preserves the pairing, i.e.
    \[\rho(u)\langle v,w\rangle=\langle D_u v,w\rangle+\langle v,D_uw\rangle.\]
Alternatively, we can describe it as a map $D\colon \Gamma(E)\to\Gamma(E^*\otimes E)$ satisfying \[D(fu)=fDu+(\differential f)\otimes u,\]
which is compatible with the pairing. Courant algebroid connections form an affine space over \[\Gamma(E\otimes \w2 E)\subset \on{Hom}(E,\on{End}(E)).\] Following the latter interpretation, for the difference of connections $A=\nabla'-\nabla$ we will use the convention
\begin{gather} \label{f:connectionconvention}
A(u,w,v)=\langle w,\nabla'_u v-\nabla_uv\rangle=\langle w,A_u v\rangle.
\end{gather}
\end{defn}

\begin{defn}
    The \emph{torsion} of a Courant algebroid connection is the section \[T_D\in\Gamma(E^*\otimes E^*\otimes E)\cong \on{Hom}(E\otimes E,E),\] defined by
    \begin{equation}\label{eq:torsion}
        T_D(u,v)=D_u v-D_v u-[u,v]+\langle Du,v\rangle.
    \end{equation}
One easily checks that this is indeed $C^\infty(M)$-multilinear.
\end{defn}

\begin{prop} Given $D$ a Courant algebroid connection, its torsion is completely antisymmetric, i.e.\ $T_D\in \w3 E^*$.
\end{prop}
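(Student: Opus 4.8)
The plan is to show $T_D(u,v,w) := \langle T_D(u,v), w\rangle$ is totally antisymmetric by establishing antisymmetry in the first two slots from the definition and then, separately, cyclic symmetry (or antisymmetry swapping the last two slots). First I would record the defining expression
\[
T_D(u,v,w) = \langle D_u v - D_v u - [u,v], w\rangle + \langle Du, v\rangle(w),
\]
and note that the first three terms are manifestly antisymmetric under $u \leftrightarrow v$ except for the bracket: by axiom (4) of Definition \ref{def:courant}, $[u,v]+[v,u] = \differential\langle u,v\rangle$, so the symmetric part of $-\langle[u,v],w\rangle$ in $u,v$ is $-\tfrac12\langle \differential\langle u,v\rangle, w\rangle = -\tfrac12 \rho(w)\langle u,v\rangle$. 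Meanwhile the last term $\langle w, Du\rangle v$ uses the convention from \eqref{f:connectionconvention}: here $\langle Du, v\rangle(w) = \langle w, D_{(\cdot)} u\rangle v$ should be read as pairing $Du \in \Gamma(E^* \otimes E)$ so that this term is symmetric in $v,w$ — i.e. it contributes $\langle w, D_v u\rangle$ paired appropriately. I would carefully unwind the index conventions so that the $\frac12$-terms from $\langle Du,v\rangle$ cancel the symmetric bracket contribution, yielding antisymmetry in $(u,v)$.

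Next I would prove antisymmetry under $v \leftrightarrow w$, equivalently that $T_D(u,v,w) + T_D(u,w,v) = 0$. Here the key input is metric compatibility of $D$: $\rho(u)\langle v,w\rangle = \langle D_u v, w\rangle + \langle v, D_u w\rangle$, so the sum of the $\langle D_u v, w\rangle + \langle D_u w, v\rangle$ pieces collapses to $\rho(u)\langle v,w\rangle$. The terms $-\langle D_v u, w\rangle - \langle D_w u, v\rangle$ combine with the $\langle Du, v\rangle(w) + \langle Du, w\rangle(v)$ terms; by the convention \eqref{f:connectionconvention} these are designed to cancel pairwise (this is precisely the role of the $\langle Du, v\rangle$ correction in \eqref{eq:torsion}). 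Finally the bracket terms $-\langle [u,v], w\rangle - \langle [u,w], v\rangle = -\rho(u)\langle v,w\rangle$ by axiom (3). So the surviving pieces are $\rho(u)\langle v,w\rangle - \rho(u)\langle v,w\rangle = 0$, giving the desired antisymmetry.

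Having antisymmetry in slots $(1,2)$ and in slots $(2,3)$, total antisymmetry on three letters follows from the standard fact that the subgroup of $S_3$ generated by the transpositions $(12)$ and $(23)$ is all of $S_3$, so $T_D$ lies in $\w3 E^*$. I would state this last step in one line.

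The main obstacle — really the only delicate point — is bookkeeping the pairing conventions: the term $\langle Du, v\rangle$ in \eqref{eq:torsion} is an element of $\Gamma(E^*)$ contracted against the remaining argument, and one must be scrupulous about which slot of $Du \in \Gamma(E^*\otimes E)$ is the ``form'' slot versus the ``vector'' slot (and about the sign/placement dictated by \eqref{f:connectionconvention}) to see the cancellations cleanly. Once the conventions are fixed, each of the two antisymmetry claims is a two- or three-line verification using exactly one Courant axiom (axiom (4) for the $(1,2)$ swap, axiom (3) for the $(2,3)$ swap) together with $D$-compatibility with the pairing.
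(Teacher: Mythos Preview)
Your proposal is correct and follows essentially the same route as the paper: verify antisymmetry in the first two slots using axiom (4) together with metric compatibility of $D$ (so that the symmetric part of $\langle D_w u, v\rangle$ in $(u,v)$ cancels the $-\tfrac12\rho(w)\langle u,v\rangle$ from the bracket), then antisymmetry in the last two slots using axiom (3) and metric compatibility (the correction terms $\langle D_w u, v\rangle$ and $-\langle D_w u, v\rangle$ cancel pairwise, exactly as in the paper). Your hesitation about the convention for $\langle Du, v\rangle$ is the only wobble --- the term evaluates to $\langle D_w u, v\rangle$, not something symmetric in $(v,w)$ --- but once that is fixed your computations go through verbatim.
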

\begin{proof}
  We directly calculate
  \begin{align*}
    \langle T(u,v)+T(v,u),w\rangle&=-\langle [u,v]+[v,u],w\rangle+\langle D_w u,v\rangle+\langle D_w v,u\rangle\\
    &=-\rho(w)\langle u,v\rangle+D_w\langle u,v\rangle=0,\\
    \langle T(u,v),w\rangle+\langle T(u,w),v\rangle&=\langle D_uv,w\rangle+\langle D_u w,v\rangle-\langle [u,v],w\rangle-\langle [u,w],v\rangle\\
    &=D_u\langle v,w\rangle-\rho(u)\langle v,w\rangle=0.\qedhere
  \end{align*}
\end{proof}

\begin{rmk}
    We will adhere to the standard notation for writing components of covariant derivatives in a local frame $e_\alpha$, such as
    \[D_\alpha D_{\beta}u^\gamma=(D(Du))_{\alpha\beta}^{\hphantom{\alpha\beta}\gamma}.\]
    Note that we also have
    \begin{equation}\label{eq:compare_connection_notations}
        D_\alpha D_\beta t^{\gamma\dots\delta}=(D_{e_\alpha}D_{e_\beta}t)^{\gamma\dots\delta}-(D_{D_{e_\alpha}e_\beta}t)^{\gamma\dots\delta}
    \end{equation}
    and
    \[u_\beta\,D_\gamma t\indices{^{\beta\delta\dots\epsilon}}=u^\beta D_\gamma t\indices{_\beta^{\delta\dots\epsilon}}.\]
    The operation $D_\alpha$ is a derivation in the sense that
    \[D_{\alpha}(t^{\beta\dots\gamma}u^{\delta\dots\epsilon})=(D_{\alpha}t^{\beta\dots\gamma})u^{\delta\dots\epsilon}+t^{\beta\dots\gamma}D_\alpha u^{\delta\dots\epsilon},\]
    and the same holds for $[D_\alpha,D_\beta]:=D_\alpha D_\beta-D_\beta D_\alpha$.
\end{rmk}

\subsection{Densities}

\begin{defn} \label{def:densities}
    Let $w\in \mb R$ and $r_w$ be a 1-dimensional representation of $GL(n,\mb R)$ on $\mb R$, given by $r_w(A)=|\det A|^{-w}$. For any manifold $M$ let $\on{Fr}M$ denote the frame bundle. The sections of the associated line bundle $\on{Fr}M\times_{r_w}\mb R$ are called \emph{$w$-densities}. In the particular cases of $w=1$ and $w=1/2$, we talk about \emph{densities} and \emph{half-densities}, respectively. We will denote the space of half-densities by $\ms H$ and the space of everywhere positive half densities by $\ms H^*$.  We note that a Riemannian metric $g$ induces a canonical half-density $\gs_g$. In particular on an oriented manifold $\gs_g^2$ can be naturally identified with the metric volume form.
  \end{defn}

\begin{rmk}
  We can now use the anchor $\rho\colon \Gamma(E)\to \mf X(M)$ and the ordinary Lie derivative to extend $\mc L$ to an action on  densities.  For instance, if $\kappa$ is an $\alpha$-density, we have
  \[\mc L_u \kappa= L_{\rho(u)}\kappa.\]
\end{rmk}

\subsection{Divergences}

\begin{defn} \label{d:divg}
    A \emph{divergence operator} (or simply \emph{divergence}) is an $\mb R$-linear map \[\div\colon \Gamma(E)\to C^\infty(M)\] such that for all $u\in\Gamma(E)$, $f\in C^\infty(M)$ we have
    \[\div(fu)=f\div u+\rho(u)f.\]
The set of all divergences forms an affine space over $\Gamma(E)$ and we will denote it by $\ms Div$.
\end{defn}
    
    \begin{ex}
        For $D$ a Courant algebroid connection we can construct the divergence
        \[\div_Du:=\on{Tr}D = D_\alpha u^\alpha .\]
    \end{ex}
    \begin{ex}
        If $\sigma\in\ms H^*$ we can construct the divergence
        \[\div_\sigma u:=\sigma^{-2}\mc L_{u}\sigma^2=2\sigma^{-1}L_{\rho(u)}\sigma=2L_{\rho(u)}\log\sigma.\]
    \end{ex}
    \begin{prop}[\cite{SeveraValach2}]
        Let $\div\in\ms Div$ and $\sigma\in\ms H^*$. Recall that $\div-\div_\sigma\in\Gamma(E^*)\cong \Gamma(E)$.
        Then the infinitesimal Courant algebroid automorphism \[Z_{\div}:=Z_{\div-\div_\sigma}\] is independent of $\sigma$.
    \end{prop}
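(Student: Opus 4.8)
The plan is to reduce the statement to a single identity: if we replace $\sigma$ by another positive half-density $\sigma'$, then $\div_{\sigma'} - \div_{\sigma}$ is an exact differential in the sense that it equals $\differential f$ for the function $f = \log(\sigma'^2/\sigma^2)$, and such sections have trivial infinitesimal action by Proposition \ref{prop:sections_automorphisms}(4). Concretely, since $\div - \div_{\sigma'}$ and $\div - \div_{\sigma}$ are both sections of $E \cong E^*$, their difference is $\div_{\sigma} - \div_{\sigma'} = (\div - \div_{\sigma'}) - (\div - \div_{\sigma})$, and by $\mathbb R$-linearity of $u \mapsto Z_u$ it suffices to show that $Z_{\div_{\sigma} - \div_{\sigma'}} = 0$.

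First I would compute $\div_{\sigma} - \div_{\sigma'}$ explicitly using the formula from the second Example above: for any $u \in \Gamma(E)$,
\[
(\div_{\sigma} - \div_{\sigma'})(u) = 2 L_{\rho(u)} \log \sigma - 2 L_{\rho(u)} \log \sigma' = 2 L_{\rho(u)} \log(\sigma/\sigma') = \rho(u)\big(\log(\sigma^2/\sigma'^2)\big).
\]
Setting $f := \log(\sigma^2/\sigma'^2) \in C^\infty(M)$, this says that the section of $E^*$ corresponding to $\div_{\sigma} - \div_{\sigma'}$ is exactly $u \mapsto \rho(u) f = \la{\rho^T df, u}$ under the pairing, i.e. it is $\rho^T \circ d f = \differential f$ as an element of $\Gamma(E^*) \cong \Gamma(E)$. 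Here it is important that $\sigma, \sigma' > 0$ so that $f$ is a well-defined smooth function globally; this is why the statement is phrased with $\ms H^*$ rather than $\ms H$.

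Next I would invoke Proposition \ref{prop:sections_automorphisms}: the section $\differential f \in \Gamma(E)$ induces the infinitesimal automorphism $Z_{\differential f}$, and property (4) states $Z_{\differential f} = 0$. Therefore
\[
Z_{\div - \div_{\sigma'}} - Z_{\div - \div_{\sigma}} = Z_{(\div - \div_{\sigma'}) - (\div - \div_{\sigma})} = Z_{\div_{\sigma} - \div_{\sigma'}} = Z_{\differential f} = 0,
\]
using the $\mathbb R$-linearity of $u \mapsto Z_u$ from property (3). Hence $Z_{\div - \div_{\sigma'}} = Z_{\div - \div_{\sigma}}$, which is precisely the asserted independence of $\sigma$.

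The only genuine point requiring care — really the one place where anything could go wrong — is matching conventions for the identification $E \cong E^*$ and for $\differential = \rho^T \circ d$, so that the computed section $u \mapsto \rho(u)f$ is correctly recognized as $\differential f$ rather than, say, its negative or a rescaling, and so that the sign/normalization in $\div_\sigma u = 2L_{\rho(u)}\log\sigma$ is threaded through consistently. These are bookkeeping matters with the definitions already fixed in the excerpt, so I do not expect any substantive obstacle; the proof is essentially the two-line chain of equalities above once the exactness of $\div_\sigma - \div_{\sigma'}$ is observed.
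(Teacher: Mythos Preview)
Your proof is correct and follows exactly the same approach as the paper: the paper's one-line proof simply writes any other half-density as $e^f\sigma$, observes $\div_{e^f\sigma}=\div_\sigma+2\differential f$, and invokes $Z_{\differential f}=0$. Your version spells out the same computation by writing $f=\log(\sigma^2/\sigma'^2)$ instead of parametrizing $\sigma'=e^f\sigma$, but the content is identical.
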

    \begin{proof}
        For any $f\in C^\infty(M)$ we have $\div_{e^f\sigma}=\div_\sigma+\, 2\differential f$ and $Z_{\differential f}=0$.
    \end{proof}
    
    \begin{prop} [\cite{SeveraValach2}]\label{prop:divergence_automorphism_action}
        The following holds for any divergence, $u,v\in \Gamma(E)$, $f\in C^\infty(M)$:
        \[\langle \mc L_{Z_{\div}} u,v\rangle=\div[u,v]-\rho(u)\div v+\rho(v)\div u,\qquad \mc L_{Z_{\div}}f=\div\differential f.\]
    \end{prop}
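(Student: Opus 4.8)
The plan is to reduce both identities to Proposition~\ref{prop:sections_automorphisms} together with one elementary identity for divergences of vector fields, by decomposing an arbitrary divergence relative to $\div_\sigma$.

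First I would fix any $\sigma\in\ms H^*$. Since $\ms Div$ is affine over $\Gamma(E)$ and $\div_\sigma\in\ms Div$, we may write $\div=\div_\sigma+\la{w,\slot}$ for a unique $w\in\Gamma(E)\cong\Gamma(E^*)$, i.e.\ $\div u=\div_\sigma u+\la{w,u}$ for all $u$. By definition $Z_{\div}=Z_{\div-\div_\sigma}=Z_w$, so Proposition~\ref{prop:sections_automorphisms} gives $\mc L_{Z_{\div}}u=[w,u]$ and $\bar Z_{\div}=\rho(w)$, hence $\mc L_{Z_{\div}}f=\rho(w)f$. Thus the two claims become statements about $\la{[w,u],v}$ and $\rho(w)f$ written back in terms of $\div$.

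For the first claim I would compute $\la{[w,u],v}$ using only the Courant axioms: axiom (4) gives $[w,u]=\differential\la{w,u}-[u,w]$; pairing with $v$ and using $\la{\differential g,v}=\rho(v)g$ (immediate from $\differential=\rho^T\circ d$) yields $\la{[w,u],v}=\rho(v)\la{w,u}-\la{[u,w],v}$; and axiom (3) applied to $u$ rewrites $\la{[u,w],v}=\rho(u)\la{w,v}-\la{w,[u,v]}$. Hence
\[\la{[w,u],v}=\rho(v)\la{w,u}-\rho(u)\la{w,v}+\la{w,[u,v]}.\]
Substituting $\la{w,\slot}=\div\slot-\div_\sigma\slot$, the $\div$-terms assemble into the desired right-hand side $\div[u,v]-\rho(u)\div v+\rho(v)\div u$, and it remains only to check that the leftover term $\rho(v)\div_\sigma u-\rho(u)\div_\sigma v+\div_\sigma[u,v]$ vanishes. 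Writing $\mu=\sigma^2$ one has $\div_\sigma u=\mu^{-1}L_{\rho(u)}\mu$, so $\div_\sigma u$ is the classical $\mu$-divergence of $\rho(u)$; since $\rho[u,v]=[\rho(u),\rho(v)]$, this is exactly the identity $\div_\mu[X,Y]=X(\div_\mu Y)-Y(\div_\mu X)$, which drops out of $[L_X,L_Y]=L_{[X,Y]}$ acting on $\mu$ together with the Leibniz rule for $L_Y\big((\div_\mu X)\mu\big)$.

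For the second claim, $\div\differential f=\div_\sigma\differential f+\la{w,\differential f}$; here $\div_\sigma\differential f=2L_{\rho(\differential f)}\log\sigma=0$ because $\rho\circ\differential=\rho\circ\rho^T\circ d=0$, while $\la{w,\differential f}=\la{\differential f,w}=(df)(\rho(w))=\rho(w)f=\mc L_{Z_{\div}}f$. The only non-formal input is the classical divergence--bracket identity invoked above; the remaining steps are routine manipulations with the Courant axioms and the isomorphism $E\cong E^*$, so I anticipate no substantial obstacle.
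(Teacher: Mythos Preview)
Your proof is correct and complete. The paper does not include a proof of this proposition (it is cited from \cite{SeveraValach2}), so there is nothing to compare against; your argument---reducing to the bracket action of $w=\div-\div_\sigma$ via Proposition~\ref{prop:sections_automorphisms}, unwinding $\langle[w,u],v\rangle$ with axioms (3)--(4), and killing the residual $\div_\sigma$-terms by the classical identity $\div_\mu[X,Y]=X\div_\mu Y-Y\div_\mu X$---is the natural one.
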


\section{Generalized Riemannian geometry}

In this section we recall fundamental aspects of generalized Riemannian geometry.  In particular we recall the basic definition, and recall the structure of Courant isometries of generalized metrics.  We then discuss the space of Levi-Civita connections compatible with a metric and choice of divergence operator, recalling their existence and non-uniqueness.  This material is mostly not new, though we include the relevant  background for convenience.  There are important precursors in the physics literature \cite{Siegel, HullZwiebach,CSCW}, and our exposition follows more recent treatments in the mathematical literature \cite{GualtieriBranes, GF14, GF19, SeveraValach2}.

\subsection{Generalized metrics}

    \begin{defn} \label{d:genmetric}
        A \emph{generalized pseudometric} on a Courant algebroid is an endomorphism \[\gm\colon E\to E,\qquad \text{s.t.}\qquad \gm^T=\gm\quad\text{and}\quad \gm^2=\Id_E.\]
        Any generalized pseudometric induces an orthogonal eigenbundle decomposition \[E=V_+\oplus V_-,\qquad \gm|_{V_\pm}=\pm\Id_{V_\pm}.\]
    Conversely, any orthogonal decomposition of $E$ into subbundles $V_\pm$ corresponds to a generalized pseudometric.  A \emph{generalized metric} is a generalized pseudometric for which $\langle\cdot,\cdot\rangle|_{V_+}$ is positive definite and $\rho|_{V_+}\colon V_+\to TM$ is an isomorphism.  Note that the existence of a generalized metric requires that $\rho$ is at least surjective, i.e.\ that the underlying Courant algebroid is transitive.  Finally, we say that the generalized metric is \emph{strictly positive} if
    \begin{align*}
        \IP{ \gm \cdot, \cdot} > 0.
    \end{align*}
    \end{defn}

    \begin{rmk}
      Throughout this article we will assume (as part of the definition of the generalized pseudometric) that $\on{rank}V_+\neq 1$ and $\on{rank} V_-\neq 1$.  The role of this assumption is to ensure the existence of Levi-Civita connections with arbitrary divergence operator.  In all interesting examples this condition automatically holds.
    \end{rmk}
    
    \begin{rmk} \label{r:strictpositive}
      Note that $\gm$ induces in particular a fiberwise inner product $\langle \gm \slot,\slot\rangle$, which we will use to define a fiberwise norm $|\slot|_{\gm}$ on $E\otimes\dots\otimes E$.  In general, even for a generalized metric, this inner product will not necessarily be positive definite.  This is precisely the condition guaranteed by strict positivity.  Importantly, we will \emph{not} use this metric to raise and lower indices, which is consistently done via the Courant algebroid inner product $\langle \slot,\slot\rangle$.  
    \end{rmk}
    
    \begin{nota}
      We shall denote the projections onto $V_\pm$ either by subscripts $(\cdot)_\pm$ or by $p_\pm$. To simplify the formulas, we will also denote frames of $V_+$ by $e_a$, $e_b$, \dots, frames of $V_-$ by $e_{\hat a}$, $e_{\hat b}$, \dots, and frames of $E$ by $e_\alpha$, $e_\beta$, \dots.  
    \end{nota}
    
    \begin{ex}\label{ex:exact_metric}
        There is a one-to-one correspondence, for a fixed $M$, between pairs $(g,H)$ of a Riemannian metric $g$ and closed 3-form $H$, and generalized metrics on exact Courant algebroids over $M$, up to Courant automorphisms covering the identity map of the base.  This correspondence can be described as follows. 
        
        Start from an exact Courant algebroid with a chosen identification $E\cong TM\oplus T^*M$, corresponding to some $H_0\in \Omega^3_{cl}(M)$, and with a generalized metric. Since the subbundle $V_+$ is transverse to $T^*M$, it is the graph of a map $TM\to T^*M$. Seeing the latter as a section of $T^*M\otimes T^*M$, let us denote by $g$ and $B$ its symmetric and antisymmetric part, respectively. One then easily sees that $g$ and $H:=H_0+dB$ are independent of the choice of identification $E\cong TM\oplus T^*M$, and $g$ is a Riemannian metric. In particular, note that the generalized metric always singles out a preferred identification, namely the one for which $B=0$.
        
        Thus, to go in the other direction, starting with a pair $(g,H)$, we can simply take $E:=TM\oplus T^*M$ with the bracket twisted by $H$ (see Example \ref{ex:exact}) and $V_+=\mathrm{graph}(g)$. 
    \end{ex}

    \begin{defn} \label{d:Mtangent}
        For a given Courant algebroid, we let $\ms M, \ms M^+$ denote the space of all generalized pseudometrics and generalized metrics, respectively.  An elementary computation shows that the tangent space to $\ms M$ at $\gm$ can be naturally identified with symmetric endomorphisms anticommuting with $\gm$,
    \[T_{\gm}\ms M\cong \{\chi\colon E\to E\mid \chi^T=\chi\;\text{and}\; \chi\gm+\gm\chi=0\}.\]
        Note that the two conditions can be rewritten as
        \[\chi_{ab}=\chi_{\hat a\hat b}=0,\qquad \chi_{a\hat a}=\chi_{\hat a a}.\]
    \end{defn}

\begin{defn} \label{d:inducedmetricdilaton}
            Given $\gm\in\ms M^+$, define $\hat \rho\colon TM\to E$ be the inverse of $\rho|_{V_+}$ composed with the inclusion $V_+\to E$.
            We then define the \emph{induced metric} $g$ by
            \[g(X,Y):=\tfrac12\langle \hat \rho(X),\hat\rho(Y)\rangle.\]
            Note that the prefactor is chosen such that in the exact case we recover the usual Riemannian metric (see Example \ref{ex:exact_metric}). Furthermore, for any $\gm\in\ms M^+$ and $\sigma\in\ms H^*$ we define the \emph{dilaton} $\varphi\in C^\infty(M)$ by
                \[\varphi:=-\log \frac{\sigma}{\sigma_g},\quad \text{i.e.}\quad \sigma=e^{-\varphi}\sigma_g,\]
                where $\sigma_g$ is the Riemannian half-density of Definition \ref{def:densities}.
\end{defn}

\subsection{Isometries of generalized metrics}

\begin{defn}
Let $E$ be a Courant algebroid with generalized metric $\gm$.  A Courant automorphism $\Phi$ is an \emph{isometry of $\gm$} if any of the following equivalent conditions is satisfied:
\begin{enumerate}
  \item $\Phi$ preserves the subbundle $V_+$
  \item $\Phi$ preserves the subbundle $V_-$
  \item $\gm \circ\, \Phi = \Phi\circ \gm.$
\end{enumerate}
Similarly, an infinitesimal Courant automorphism $Z$ is a \emph{generalized Killing vector field} if either
\begin{enumerate}
  \item $Z$ preserves $V_+$ (equivalently, $\mc L_Z$ preserves $\Gamma(V_+)$)
  \item $Z$ preserves $V_-$ (equivalently, $\mc L_Z$ preserves $\Gamma(V_-)$)
  \item $\mc L_Z\gm=0$.
\end{enumerate}
In particular, if $\Phi_s$ is a one-parameter family of isometries with $
\Phi_0=\Id$ then $Z:=\left.\dels\right|_0\Phi_s\in\mf{aut}(E)$ is a generalized Killing vector field.
\end{defn}

\begin{defn}
        A divergence $\div$ is \emph{compatible} with a generalized pseudometric if the vector field $Z_{\div}$ is generalized Killing. Note that if $\div=\div_\sigma$ for some $\sigma\in\ms H^*$, then we have $Z_{\div}=0$. Thus all half-density-induced divergences are automatically compatible with all generalized metrics.
\end{defn}
    
    \begin{ex}[\cite{GRFbook,SeveraValach2}]
        Let $E$ be an exact Courant algebroid with a generalized metric, corresponding to a pair $(g,H)$. Let $\sigma$ be the half-density given by $g$ and $E\cong TM\oplus T^*M$ the identification provided by the generalized metric. Any divergence is then of the form
        \[\div=\div_\sigma+\,\langle X+\alpha,\slot\rangle,\]
        for $X\in\mf X(M)$ and $\alpha\in\Omega^1(M)$. It is compatible with the generalized metric iff
        \[L_Xg=0 \qquad \text{and}\qquad d\alpha=i_XH.\]
        As a particular case observe that we can take $X=0$ and $\alpha=d\varphi$.
        
        Replacing the class of half-density-induced divergences by a larger class of metric-compa\-ti\-ble divergences corresponds physically to passing from ordinary to generalized supergravity equations \cite{Arutyunov:2015mqj, Wulff:2016tju}.
    \end{ex}

\subsection{Levi-Civita connections}\label{subsec:lc}
    We now turn to a natural generalized-geometric analogue of Levi-Civita connections. It is important to stress that although these objects always exist, they are in general not unique. We start with a brief discussion of the torsion map.

    \begin{defn}
        We define the \emph{torsion map} as the vector bundle map
        \[\tau\colon E\otimes\w2E\to\w3E,\qquad \tau(A):=T_{D+A}-T_D,\]
        where $D$ is an arbitrary Courant algebroid connection. One easily sees that the definition is independent of the choice of $D$.
    \end{defn}
    \begin{lemma}
        $\tau$ equals $-3$ times the antisymmetrisation map $E\otimes\w2E\to\w3E$.
    \end{lemma}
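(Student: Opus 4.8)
The plan is to compute $\tau$ directly on decomposable elements using the explicit torsion formula \eqref{eq:torsion} and a convenient choice of reference connection. Since $\tau(A) = T_{D+A} - T_D$ is independent of $D$, I am free to pick $D$ to be any Courant algebroid connection; the cleanest choice is to work pointwise and use that the three bracket-dependent terms in \eqref{eq:torsion} cancel in the difference. Explicitly, writing out $T_{D+A}(u,v) - T_D(u,v)$, the Lie bracket term $[u,v]$ appears in both and cancels, leaving only the connection-dependent pieces:
\begin{align*}
  \tau(A)(u,v) = A_u v - A_v u + \langle Au, v\rangle,
\end{align*}
where in the last term I use the convention \eqref{f:connectionconvention}, i.e.\ $\langle Au, v\rangle$ denotes the $E$-valued object $w \mapsto \langle A(u,w,\cdot), v\rangle$ appropriately, matching the pairing of $Du$ with $v$ in \eqref{eq:torsion}.

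The key step is then to pair the above with a test section $w \in \Gamma(E)$ and use the antisymmetry $A \in \Gamma(E \otimes \wedge^2 E)$, which by the convention \eqref{f:connectionconvention} means $A(u,w,v) = \langle w, A_u v\rangle$ is antisymmetric in the last two slots $w, v$. So I would compute
\begin{align*}
  \langle \tau(A)(u,v), w\rangle = \langle A_u v, w\rangle - \langle A_v u, w\rangle + \langle A_w u, v\rangle = A(u,w,v) - A(v,w,u) + A(w,v,u).
\end{align*}
Using antisymmetry in the last two arguments to reorder each term into a fixed slot pattern, each of the three terms equals (up to sign) $A(u,v,w)$, and summing the signs gives $-3\,A(u,v,w)$, i.e.\ $\langle\tau(A)(u,v),w\rangle = -3\,\mathrm{Alt}(A)(u,v,w)$ after also invoking that the result already lands in $\wedge^3 E$ so total antisymmetrization is harmless. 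I should double-check the sign bookkeeping carefully against the conventions in \eqref{f:connectionconvention} and \eqref{eq:torsion}, since the factor $-3$ versus $+3$ versus $3$ is exactly what the lemma asserts and depends on these sign choices; this is the only place an error could creep in.

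The main (minor) obstacle is purely notational: keeping the index/slot conventions from \eqref{f:connectionconvention} consistent, in particular being careful that $\langle Du, v\rangle$ in \eqref{eq:torsion} is the $E$-valued contraction dual to the one used for $A_u v$, and that "antisymmetrisation map" is normalized without the $1/3!$ factor (i.e.\ $\mathrm{Alt}(A)(u,v,w) = \tfrac{1}{?}\sum_{\text{cyc}}\dots$ — I would state explicitly that by $\mathrm{Alt}$ we mean the sum over the three cyclic permutations, not the averaged projector, so that the total coefficient is indeed $-3$). A clean way to avoid sign confusion entirely is to verify the identity in a local frame using the component notation of the Remark on covariant derivatives: write $A = A_{\alpha\beta\gamma}$ with $A_{\alpha\beta\gamma} = -A_{\alpha\gamma\beta}$, expand $\tau(A)_{\alpha\beta\gamma}$ from the component form of \eqref{eq:torsion}, and read off the coefficient. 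I expect no real difficulty beyond this; the statement is essentially a linear-algebra identity once the bracket terms are cancelled.
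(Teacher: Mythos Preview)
Your approach is correct and essentially identical to the paper's: cancel the bracket terms in the difference $T_{D+A}-T_D$, pair with a test section, and read off $\tau(A)(u,v,w)=A(u,w,v)+A(v,u,w)+A(w,v,u)$. One small correction to your parenthetical: with ``antisymmetrisation'' meaning the idempotent projector (which on $E\otimes\wedge^2 E$ reduces to $\tfrac{1}{3}$ times the cyclic sum), the coefficient is $-3$; if you instead took the unnormalized cyclic sum you would get $-1$, so your stated normalization is backwards.
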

    \begin{proof}
        $\tau(A)(u,v,w)=\langle A_uv-A_vu,w\rangle+\langle A_w u,v\rangle=A(u,w,v)+A(v,u,w)+A(w,v,u)$.
    \end{proof}    
   
    \begin{defn}
        A Courant algebroid connection $D$ is \emph{compatible} with the generalized pseudometric if $D\gm=0$, or equivalently $D_u\Gamma(V_\pm)\subset \Gamma(V_\pm)$ for every $u\in\Gamma(E)$. It is called \emph{Levi-Civita} if it furthermore has vanishing torsion.
    \end{defn}
    \begin{lemma}\label{lem:torsion_on_functions}
      If $D$ has vanishing torsion, then for any $f\in C^\infty(M)$ we have
      \begin{align}\label{eq:commut_functions}
          [D_\alpha,D_\beta]f=\rho(\langle D e_\alpha,e_\beta\rangle)f.
      \end{align}
    \end{lemma}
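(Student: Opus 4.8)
The plan is to compute the commutator $[D_\alpha, D_\beta]f$ directly from the definition of the covariant derivative acting iteratively on functions and sections, and then to use the vanishing-torsion hypothesis to simplify the bracket term that appears. First I would recall that for a function $f$ one has $D_\alpha f = \rho(e_\alpha)f$, so that $D_\beta f = \rho(e_\beta)f$ is again a function, and hence $D_\alpha D_\beta f = (D_{e_\alpha} D_{e_\beta} f)$ once we account, via \eqref{eq:compare_connection_notations}, for the correction term $-(D_{D_{e_\alpha} e_\beta}f) = -\rho(D_{e_\alpha}e_\beta)f$. Writing $D_{e_\alpha}D_{e_\beta}f = \rho(e_\alpha)\big(\rho(e_\beta)f\big)$, antisymmetrizing in $\alpha,\beta$ produces the ordinary Lie bracket $[\rho(e_\alpha),\rho(e_\beta)]f$ together with the leftover term $-\rho\big(D_{e_\alpha}e_\beta - D_{e_\beta}e_\alpha\big)f$.

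Next I would invoke the torsion formula \eqref{eq:torsion}: with $T_D = 0$ we get $D_{e_\alpha}e_\beta - D_{e_\beta}e_\alpha = [e_\alpha,e_\beta] - \langle D e_\alpha, e_\beta\rangle$, where the last term is the section of $E$ obtained by contracting $De_\alpha \in \Gamma(E^*\otimes E)$ against $e_\beta$ in the second slot (using $E\cong E^*$). Substituting this in, the $[e_\alpha,e_\beta]$ piece contributes $-\rho([e_\alpha,e_\beta])f$, which exactly cancels $[\rho(e_\alpha),\rho(e_\beta)]f$ by the bracket-compatibility of the anchor, $\rho([u,v]) = [\rho(u),\rho(v)]$, established right after Definition \ref{def:courant}. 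What survives is precisely $+\rho\big(\langle D e_\alpha, e_\beta\rangle\big)f$, which is the claimed right-hand side of \eqref{eq:commut_functions}.

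The only mildly delicate point — and the step I expect to require the most care — is bookkeeping the index/contraction conventions correctly: one must be consistent about which slot of $D e_\alpha \in \Gamma(E^* \otimes E)$ gets contracted with $e_\beta$, and about the identification $E \cong E^*$ via the pairing, so that the term $\langle De_\alpha, e_\beta\rangle$ lands as an honest section of $E$ on which $\rho$ can act. Beyond this, the argument is a short and purely formal manipulation, with no analytic or geometric obstacle; the cancellation between the Lie-bracket term and the anchor of the Courant bracket is the conceptual heart of the computation.
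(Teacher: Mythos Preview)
Your proposal is correct and follows essentially the same approach as the paper: expand $[D_\alpha,D_\beta]f$ via \eqref{eq:compare_connection_notations} into $([\rho(e_\alpha),\rho(e_\beta)]-\rho(D_{e_\alpha}e_\beta-D_{e_\beta}e_\alpha))f$, then use the torsion-free condition and the anchor--bracket compatibility $\rho([u,v])=[\rho(u),\rho(v)]$ to reduce to $\rho(\langle De_\alpha,e_\beta\rangle)f$. The paper's proof is simply a more compressed version of the same computation.
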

    \begin{proof}
      We directly calculate
      \begin{align*}
        [D_\alpha,D_\beta]f&=([D_{e_\alpha},D_{e_\beta}]-D_{D_{e_\alpha}e_\beta}+D_{D_{e_\beta}e_\alpha})f=([\rho(e_\alpha),\rho(e_\beta)]-\rho(D_{e_\alpha}e_\beta)+\rho(D_{e_\beta}e_\alpha))f\\
        &=\rho([e_\alpha,e_\beta]-D_{e_\alpha}e_\beta+D_{e_\beta}e_\alpha)f=\rho(\langle D e_\alpha,e_\beta\rangle)f.\qedhere
      \end{align*}
    \end{proof}
    \begin{rmk}
        Note that due to $\rho\circ \differential=0$ we have $\rho(\langle Du,v\rangle)=u^\alpha v^\beta\rho(\langle De_\alpha,e_\beta\rangle)$. Thus both sides of \eqref{eq:commut_functions} are in fact tensorial in $\alpha$ and $\beta$ (i.e.\ they are the $\alpha$, $\beta$ components of a tensor in $\w2E^*$).
    \end{rmk}
    
    \begin{prop}\label{prop:lcbracket} (cf.\ \cite{GF19} Lemma 3.2)
        If $D$ is Levi-Civita, then $D_{u_+}v_-=[u_+,v_-]_-$ and $D_{v_-}u_+=[v_-,u_+]_+$.
    \end{prop}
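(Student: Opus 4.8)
The plan is to exploit the two defining properties of a Levi-Civita connection, namely metric compatibility $D\gm = 0$ and vanishing torsion, together with the torsion formula \eqref{eq:torsion}. I will work with $u_+ \in \Gamma(V_+)$ and $v_- \in \Gamma(V_-)$ and test against an arbitrary section $w$, splitting $w = w_+ + w_-$ as convenient.

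First I would write out $T_D(u_+, v_-) = 0$ explicitly using \eqref{eq:torsion}:
\[
D_{u_+} v_- - D_{v_-} u_+ - [u_+, v_-] + \langle D u_+, v_-\rangle = 0.
\]
Metric compatibility tells us $D_{u_+} v_- \in \Gamma(V_-)$ and $D_{v_-} u_+ \in \Gamma(V_+)$, so projecting this identity onto $V_-$ should isolate $D_{u_+} v_-$ in terms of $[u_+,v_-]_-$, provided I can show the term $\langle D u_+, v_-\rangle$ has the right behavior. The key subtlety is the last term: $\langle D u_+, v_-\rangle$ is the section of $E$ obtained by contracting the $E^*$-slot of $D u_+$ with $v_-$, i.e.\ in indices $\langle D e_\alpha, v_-\rangle$ raised via the pairing. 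Because $D$ preserves $V_+$, we have $D_w u_+ \in \Gamma(V_+)$ for all $w$, hence $\langle D_w u_+, v_-\rangle = 0$ for $v_- \in \Gamma(V_-)$; this means the vector $\langle D u_+, v_-\rangle \in \Gamma(E)$ pairs trivially against everything in $V_-$... more carefully, $\langle w, \langle D u_+, v_-\rangle\rangle = \langle D_w u_+, v_-\rangle = 0$ for every $w$, so by non-degeneracy of the pairing $\langle D u_+, v_-\rangle = 0$ outright. Thus the torsion identity collapses to $D_{u_+} v_- - D_{v_-} u_+ = [u_+, v_-]$. Projecting onto $V_-$ gives $D_{u_+} v_- = [u_+,v_-]_-$, and projecting onto $V_+$ gives $-D_{v_-} u_+ = [u_+, v_-]_+ = -[v_-, u_+]_+ + (\differential\langle u_+, v_-\rangle)_+$; since $\langle u_+, v_-\rangle = 0$ by orthogonality of $V_\pm$, the last term vanishes and we get $D_{v_-} u_+ = [v_-, u_+]_+$.

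The main obstacle I anticipate is correctly handling the index-gymnastics of the term $\langle D u_+, v_-\rangle$ and being careful that "$D$ preserves $V_+$" applies to $D_w u_+$ for the section $u_+$ being differentiated (which it does, by the equivalent formulation $D_w \Gamma(V_\pm) \subset \Gamma(V_\pm)$ in the definition of compatibility), rather than accidentally conflating it with differentiation in a $V_+$-direction. Once that vanishing is established, the rest is a clean projection argument using only that $V_+ \perp V_-$ and the symmetry axiom $[u,v] + [v,u] = \differential\langle u,v\rangle$ of the Courant bracket. I would also double-check the sign and placement conventions against \eqref{f:connectionconvention}, but no serious analytic input is needed — this is purely algebraic.
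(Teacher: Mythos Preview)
Your proof is correct and follows exactly the paper's approach: both reduce the torsion identity to $D_{u_+}v_- - D_{v_-}u_+ = [u_+,v_-]$ and then project onto $V_\pm$. You have in fact been more careful than the paper, which leaves implicit both the vanishing of $\langle Du_+, v_-\rangle$ (via metric compatibility) and the identity $[u_+,v_-]_+ = -[v_-,u_+]_+$ (via orthogonality and the symmetry axiom) that you correctly spell out.
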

    \begin{proof}
        Vanishing torsion implies $D_{u_+}v_--D_{v_-}u_+=[u_+,v_-]$. The proposition corresponds to taking the $V_-$ and $V_+$ components of this equation, respectively.
    \end{proof}

    \begin{ex}[\cite{GualtieriBranes}]\label{ex:bismut}
        Let $E$ be an exact Courant algebroid with a generalized metric $\gm$, corresponding to a pair $(g,H)$. Let $\nabla^+$ and $\nabla^-$ be the (ordinary) metric connections on $TM$, with torsions $H$ and $-H$, respectively. Then
        \[\rho([u_-,v_+]_+)=\nabla^+_{\rho(u_-)}\rho(v_+)\qquad \text{and}\qquad \rho([u_+,v_-]_-)=\nabla^-_{\rho(u_+)}\rho(v_-).\]
    \end{ex}

    \begin{lemma}
    Let $\gm$ be a generalized pseudometric. Then the map
      \[\tau'\colon \w{3}\!\!E\to E\otimes(\w2V_+\oplus\w2V_-)\subset \on{Hom}(E,\on{End}(V_+)\oplus\on{End}(V_-))\]
      \[(\tau't)^{abc}:=-\tfrac13t^{abc},\quad (\tau't)^{\hat abc}:=-t^{\hat abc},\quad (\tau't)^{a\hat b\hat c}:=-t^{a\hat b\hat c},\quad (\tau't)^{\hat a\hat b\hat c}:=-\tfrac13t^{\hat a\hat b\hat c},\]
      or, more compactly,
      \[(\tau't)^{\alpha\beta\gamma}=-\tfrac13(t^{\alpha\beta\gamma}+\gm\indices{^\alpha_\delta}t^{\delta\epsilon[\beta}\gm\indices{^{\gamma]}_\epsilon}+t^{\alpha\delta\epsilon}\gm\indices{^\beta_\delta}\gm\indices{^\gamma_\epsilon}),\]
      satisfies $\tau\circ \tau'=\Id$, where $\tau$ is the torsion map defined above.
    \end{lemma}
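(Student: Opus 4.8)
The plan is to verify $\tau\circ\tau'=\Id$ component-wise, exploiting the eigenbundle decomposition $E=V_+\oplus V_-$. First I would record, from the preceding lemma (``$\tau$ equals $-3$ times the antisymmetrisation map''), that in an arbitrary local frame with indices raised via the pairing, $\tau$ acts on any $A\in E\otimes\w2E$ — which is antisymmetric in its last two slots — by $\tau(A)^{\alpha\beta\gamma}=-(A^{\alpha\beta\gamma}+A^{\beta\gamma\alpha}+A^{\gamma\alpha\beta})$, i.e.\ by $(-1)$ times the sum over cyclic permutations. (It is worth double-checking this sign and normalisation against the lemma's proof, since an error there propagates.)

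Next, since $\tau(\tau't)\in\w3E$ and every totally antisymmetric $3$-tensor on $E=V_+\oplus V_-$ is determined by its components of index-type $(+,+,+)$, $(+,+,-)$, $(+,-,-)$, $(-,-,-)$, it suffices to show that for an arbitrary $t\in\w3E$ the component $\tau(\tau' t)^{\alpha\beta\gamma}$ equals $t^{\alpha\beta\gamma}$ for each of these four types. The key observation is that a component $(\tau't)^{\mu\nu\rho}$ vanishes unless its last two indices $\nu,\rho$ lie in the same eigenbundle — because $\tau't$ takes values in $\w2V_+\oplus\w2V_-$ — and that on the surviving components $\tau'$ is multiplication by $-\tfrac13$ (when the first index is of the same type as the last two) or by $-1$ (when it is not). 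So in the cyclic sum defining $\tau(\tau' t)^{\alpha\beta\gamma}$, only those cyclic images of $\alpha\beta\gamma$ whose last pair is ``pure'' contribute, and one works out which these are for each type.

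Carrying this out: for type $(+,+,+)$, all three cyclic images $abc$, $bca$, $cab$ have a pure last pair, each contributing $-\tfrac13 t^{abc}$ (using $t^{bca}=t^{cab}=t^{abc}$), so $\tau(\tau' t)^{abc}=-3\bigl(-\tfrac13 t^{abc}\bigr)=t^{abc}$; type $(-,-,-)$ is identical with hats. For type $(+,+,-)$, taking the representative ordering $a,b,\hat c$, the cyclic images $ab\hat c$ and $b\hat c a$ have a mixed last pair and drop out, while $\hat c a b$ survives with coefficient $-1$, giving $\tau(\tau' t)^{ab\hat c}=-\bigl(-t^{\hat c a b}\bigr)=t^{ab\hat c}$ since $t^{\hat c a b}=t^{ab\hat c}$; type $(+,-,-)$ is the mirror image, handled identically. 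Hence $\tau(\tau' t)=t$ for all $t$, which is the claim. I would also note in passing that the ``compact'' formula for $\tau'$ reduces to the four displayed component formulas upon substituting $\gm\indices{^\alpha_\beta}=\pm\delta^\alpha_\beta$ on $V_\pm$ — a direct calculation.

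The argument has no genuine obstacle; the only point requiring care is the bookkeeping — correctly identifying which components of $\tau' t$ vanish, and keeping the signs straight when permuting indices cyclically and reordering them into ``pure-last-pair'' form.
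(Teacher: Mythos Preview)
Your proposal is correct and follows essentially the same approach as the paper: a direct component-wise verification of $\tau\circ\tau'=\Id$ on each index type, using that $\tau$ is $-3$ times antisymmetrisation and that $\tau't$ vanishes whenever its last two indices are of mixed type. The paper is terser --- it computes only the $abc$ and $\hat abc$ cases explicitly and leaves the rest as evident --- but your more systematic walkthrough of all four types, together with the observation about which cyclic images survive, is the same argument spelled out in full.
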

    \begin{proof}
      This is straightforward, for instance:
      \[(\tau\tau't)^{abc}=-3(\tau't)^{[abc]}=t^{abc},\qquad (\tau\tau't)^{\hat abc}=-3(\tau't)^{[\hat abc]}=-\tfrac36((\tau't)^{\hat abc}-(\tau't)^{\hat acb})=t^{\hat abc}.\qedhere\]
    \end{proof}
    \begin{cor}
      For any generalized pseudometric there exists a Levi-Civita connection.
    \end{cor}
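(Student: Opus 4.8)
The plan is to exploit the section $\tau' \colon \w{3}E \to E \otimes (\w2 V_+ \oplus \w2 V_-)$ constructed in the preceding lemma, which splits the torsion map $\tau$ in the sense that $\tau \circ \tau' = \Id$. First I would invoke the corollary following the existence of the splitting map $\tau'$ for the generalized pseudometric (from the earlier lemma at the end of \S\ref{subsec:cacon}): there always exists at least one Courant algebroid connection $D_0$ that is \emph{compatible} with $\gm$, i.e.\ $D_0 \gm = 0$, by the standard construction of a metric connection on $V_+ \oplus V_-$ (choose ordinary connections on the subbundles $V_\pm$ using the non-degenerate induced pairings, taking into account that $\operatorname{rank} V_\pm \neq 1$ so this is possible; glue them together). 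This $D_0$ need not be torsion-free, but its torsion $T_{D_0} \in \Gamma(\w3 E^*)$, as shown in the Proposition on antisymmetry of torsion.

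The key step is to correct $D_0$ by a compatible endomorphism. I would set
\[
D := D_0 - \tau'(T_{D_0}),
\]
where $\tau'(T_{D_0}) \in \Gamma\big(E \otimes (\w2 V_+ \oplus \w2 V_-)\big) \subset \Gamma(E \otimes \w2 E)$. Two things need checking. First, $D$ is still a Courant algebroid connection: this is automatic since connections form an affine space over $\Gamma(E \otimes \w2 E)$ and $\tau'(T_{D_0})$ lands in a subspace of that. Second — and this is the essential point — $D$ is still $\gm$-compatible: because the correction term takes values in $E \otimes (\w2 V_+ \oplus \w2 V_-)$, its action preserves $\Gamma(V_+)$ and $\Gamma(V_-)$ separately (an element of $\w2 V_\pm$, viewed in $\operatorname{End}(V_\pm)$, maps $V_\pm$ into $V_\pm$ and annihilates the other summand), so $D$ still satisfies $D_u \Gamma(V_\pm) \subset \Gamma(V_\pm)$, hence $D\gm = 0$.

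Finally, the torsion computation: by definition of the torsion map $\tau$ we have $T_D = T_{D_0 + A} = T_{D_0} + \tau(A)$ with $A = -\tau'(T_{D_0})$, so
\[
T_D = T_{D_0} - \tau\big(\tau'(T_{D_0})\big) = T_{D_0} - T_{D_0} = 0,
\]
using $\tau \circ \tau' = \Id$ from the lemma. Thus $D$ is compatible with $\gm$ and torsion-free, i.e.\ Levi-Civita, as required. I do not anticipate a serious obstacle; the only point requiring a little care is the existence of \emph{some} $\gm$-compatible connection $D_0$ to start from, which is where the rank hypothesis $\operatorname{rank} V_\pm \neq 1$ enters (it guarantees one can choose connections on the $V_\pm$ preserving the fiber metrics — in rank one a metric-preserving connection still exists trivially, but the structure is needed later for arbitrary divergence; here it is harmless), together with the observation that being $\gm$-compatible is equivalent to preserving the $V_\pm$ decomposition.
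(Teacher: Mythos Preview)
Your proposal is correct and is essentially identical to the paper's proof: construct a $\gm$-compatible connection by gluing ordinary metric connections on $V_\pm$ via the anchor, then subtract $\tau'(T_{D_0})$ and use $\tau\circ\tau'=\Id$ to kill the torsion. One small clarification: the rank hypothesis $\operatorname{rank}V_\pm\neq 1$ plays no role in this corollary (metric connections on line bundles exist too) and is only needed later for $\kappa'$ and the prescribed-divergence statement, as you eventually note yourself.
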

    \begin{proof} We first construct a connection compatible with $\gm$.  Choose arbitrary (ordinary) inner-product-preserving vector bundle connections  $\nabla^{V_\pm}$ on $V_\pm$, and then define 
    \[D_uv:=\nabla^{V_+}_{\rho(u)}v_++\nabla^{V_-}_{\rho(u)}v_-.\]
    If $T_D$ is the torsion of this connection, we can now form a new compatible connection $D-\tau'(T_{D})$ whose torsion is $T_{D-\tau'(T_{D})}=T_{D}-\tau\tau'(T_{D})=0$.
    \end{proof}
    Levi-Civita connections form an affine space over sections of
    \begin{equation}\label{eq:lcaffine}
      \on{ker}\tau\cap (E\otimes(\w2V_+\oplus\w2V_-)),
    \end{equation}
    i.e.\ of the bundle consisting of elements $A\in E\otimes E\otimes E$ satisfying
    \[A_{\alpha\beta\gamma}=-A_{\alpha\gamma\beta},\quad A_{\alpha a\hat b}=0,\quad A_{[\alpha\beta\gamma]}=0.\]
    We can impose a further condition on a Levi-Civita connection, which is to fix a prescribed divergence operator.  
    To achieve this, first define the map \[\kappa\colon E\otimes \w2E\to E\otimes E\otimes E\to E\] as a contraction (using the inner product) on the first two factors, i.e.
    \[\kappa(A)^\alpha:=A\indices{^\beta_\beta^\alpha}.\]
    Note that $\kappa\circ \tau'=0$. Set \[n_\pm:=\on{rank}(V_\pm).\]
    \begin{lemma}
      Let $\gm$ be a generalized pseudometric. Then
      \[\kappa'\colon E\to \on{ker}\tau\cap (E\otimes(\w2V_+\oplus\w2V_-)),\quad u\mapsto \tfrac1{n_+-1}e_a\otimes(e^a\wedge u_+)+\tfrac1{n_--1}e_{\hat a}\otimes(e^{\hat a}\wedge u_-)\]
      satisfies $\kappa\circ\kappa'=\Id$. In components we have
      \[(\kappa'u)_{abc}=\tfrac2{n_+-1}\eta_{a[b}u_{c]},\qquad (\kappa'u)_{\hat a\hat b\hat c}=\tfrac2{n_--1}\eta_{\hat a[\hat b}u_{\hat c]},\]
      or in a more unified fashion:
      \[(\kappa'u)_{\alpha\beta\gamma}=\tfrac{2-(n_++n_-)}{2(n_+-1)(n_--1)}(\eta_{\alpha[\beta}u_{\gamma]}+\gm_{\alpha[\beta}\gm\indices{_{\gamma]}_\delta}u^\delta)+\tfrac{n_+-n_-}{2(n_+-1)(n_--1)}(\gm_{\alpha[\beta}u_{\gamma]}+\eta_{\alpha[\beta}\gm_{\gamma]\delta}u^\delta).\]
    \end{lemma}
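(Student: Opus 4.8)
The plan is to verify the claimed identity $\kappa \circ \kappa' = \Id$ by direct component computation, checking first that $\kappa'$ indeed lands in $\ker\tau \cap (E\otimes(\w2V_+\oplus\w2V_-))$, and then that applying $\kappa$ recovers the original section. First I would check that $\kappa' u$ lies in the correct subspace: the defining formula $\kappa' u = \tfrac{1}{n_+-1} e_a\otimes(e^a\wedge u_+) + \tfrac{1}{n_--1} e_{\hat a}\otimes(e^{\hat a}\wedge u_-)$ manifestly has its second and third slots valued in $\w2 V_+$ or $\w2 V_-$ (never mixing a $V_+$ index with a $V_-$ index), so $A_{\alpha a \hat b}=0$ is immediate and antisymmetry in the last two slots is built in; the only real check is the cyclic condition $A_{[\alpha\beta\gamma]}=0$, which for the $(abc)$-block amounts to $\eta_{a[b}u_{c]}$ being antisymmetrized to zero in all three indices — this follows since $\eta_{[ab}u_{c]}$ is the antisymmetrization of a symmetric-times-vector object, hence vanishes — and similarly for the hatted block.

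Next I would compute $\kappa(\kappa' u)^\alpha = (\kappa' u)\indices{^\beta_\beta^\alpha}$ by splitting the trace over $\beta$ into a sum over $V_+$ frame indices and $V_-$ frame indices. On the $V_+$ piece, using $(\kappa' u)_{abc} = \tfrac{2}{n_+-1}\eta_{a[b}u_{c]} = \tfrac{1}{n_+-1}(\eta_{ab}u_c - \eta_{ac}u_b)$, contracting the first two indices via $\eta^{ab}$ gives $\tfrac{1}{n_+-1}(n_+ u_c - u_c) = u_c$, i.e.\ exactly $(u_+)_c$; the $V_-$ block is contracted by $(\kappa' u)_{\hat a\hat b\hat c}$ identically to give $(u_-)_{\hat c}$. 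Adding, $\kappa(\kappa' u) = u_+ + u_- = u$, which is the claim. I would also note that the cross terms vanish because $\kappa' u$ has no mixed components, so the trace genuinely decouples into the two blocks.

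Finally, to establish the "unified" closed formula for $(\kappa' u)_{\alpha\beta\gamma}$, I would expand the right-hand side using the projectors $p_\pm = \tfrac12(\Id \pm \gm)$ — equivalently, decompose $\eta_{\alpha\beta}$ and $\gm_{\alpha\beta}$ into their $V_\pm$-block parts via $\gm_{ab}=\eta_{ab}$, $\gm_{\hat a\hat b} = -\eta_{\hat a\hat b}$, $\gm_{a\hat b}=0$ — and check that restricting the proposed expression to an all-$V_+$ index triple reproduces $\tfrac{2}{n_+-1}\eta_{a[b}u_{c]}$, to an all-$V_-$ triple reproduces $\tfrac{2}{n_--1}\eta_{\hat a[\hat b}u_{\hat c]}$, and to any mixed triple gives zero; the coefficients $\tfrac{2-(n_++n_-)}{2(n_+-1)(n_--1)}$ and $\tfrac{n_+-n_-}{2(n_+-1)(n_--1)}$ are precisely the solution of the resulting $2\times 2$ linear system matching $\tfrac{1}{n_+-1}$ on the $V_+$ block and $\tfrac{1}{n_--1}$ on the $V_-$ block. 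The main obstacle is purely bookkeeping: keeping the index symmetrizations and the $\pm$-block decompositions of $\eta$ and $\gm$ straight so that the mixed components genuinely cancel and the pure components carry the right combinatorial factors — there is no conceptual difficulty, only the risk of sign or normalization slips in the antisymmetrization conventions.
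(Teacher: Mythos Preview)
Your proposal is correct and follows the same direct-computation approach as the paper. The paper's proof is in fact a single line---it simply expands $\kappa\kappa'(u)=\tfrac1{n_+-1}(\langle e_a,e^a\rangle u_+-\langle e_a,u_+\rangle e^a)+\tfrac1{n_--1}(\langle e_{\hat a},e^{\hat a}\rangle u_--\langle e_{\hat a},u_-\rangle e^{\hat a})=u$---and does not bother to verify the target space or the unified component formula, treating those as routine; your proposal spells out these checks explicitly but otherwise proceeds identically.
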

    \begin{proof}
      We have $\kappa\kappa'(u)=\tfrac1{n_+-1}(\langle e_a,e^a\rangle u_+-\langle e_a,u_+\rangle e^a)+\tfrac1{n_--1}(\langle e_{\hat a},e^{\hat a}\rangle u_--\langle e_{\hat a},u_-\rangle e^{\hat a})=u$. 
    \end{proof}
    Note that it is precisely for this result that we require the assumption $n_\pm\neq 1$.
    
    \begin{cor}[\cite{GF19} Proposition 3.3]
      Let $\gm$ be a generalized pseudometric and $\div$ an arbitrary divergence operator. Then there exists a Levi-Civita connection $D$ with $\div_D=\div$.
    \end{cor}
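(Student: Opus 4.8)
The plan is to combine the two preceding lemmas: the existence of \emph{some} Levi-Civita connection for $\gm$, and the existence of the right inverse $\kappa'$ of the trace map $\kappa$. The point is that prescribing the divergence amounts to prescribing the $\kappa$-trace of the affine correction term, and that Levi-Civita connections form an affine space over $\ker\tau\cap(E\otimes(\w2 V_+\oplus\w2 V_-))$, on which $\kappa$ admits the section $\kappa'$.

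First I would invoke the previous corollary to fix an arbitrary Levi-Civita connection $D_0$ compatible with $\gm$. Its divergence $\div_{D_0}=\on{Tr}D_0$ is an element of $\ms Div$, so the difference $\div-\div_{D_0}$ lies in the model space $\Gamma(E^*)$, which we identify with $\Gamma(E)$ via the pairing; call the resulting section $v$. The elementary identity underlying everything is that for any $A\in\Gamma(E\otimes\w2 E)$ — the affine direction for Courant connections — one has $\div_{D_0+A}u-\div_{D_0}u=A\indices{^\beta_\beta^\gamma}u_\gamma=\langle\kappa(A),u\rangle$, i.e.\ under $E\cong E^*$ the change of divergence is precisely $\kappa(A)$; this follows immediately from the definitions of $\on{Tr}$ and of $\kappa$ together with the sign convention \eqref{f:connectionconvention}.

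With this in hand I would set $A:=\kappa'(v)$ and $D:=D_0+A$. Since $\kappa'$ takes values in $\ker\tau\cap(E\otimes(\w2 V_+\oplus\w2 V_-))$, the correction $A$ lies in the affine-space direction \eqref{eq:lcaffine}, so it preserves both $\gm$-compatibility and vanishing of torsion; hence $D$ is again Levi-Civita. On the other hand $\div_D=\div_{D_0}+\kappa(\kappa'(v))=\div_{D_0}+v=\div$ by the identity $\kappa\circ\kappa'=\Id$ of the preceding lemma. This produces the desired connection.

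There is no serious obstacle here; the only point requiring a moment's care is the bookkeeping identifying $\div_{D+A}-\div_D$ with $\kappa(A)$ under $E\cong E^*$, together with the remark — already recorded as $\kappa\circ\tau'=0$, though not strictly needed for existence — that correcting within $\ker\tau$ does not interfere with prescribing the $\kappa$-trace. As noted after the previous lemma, the hypothesis $n_\pm\neq 1$ is used precisely to guarantee the existence of $\kappa'$, and so enters this argument as well.
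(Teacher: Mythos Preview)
Your proof is correct and essentially identical to the paper's: start from any Levi-Civita connection $D_0$, set $D:=D_0+\kappa'(\div-\div_{D_0})$, and check $\div_D=\div_{D_0}+\kappa\kappa'(\div-\div_{D_0})=\div$. The paper's proof is a one-liner; your additional remarks about why $\div_{D_0+A}-\div_{D_0}=\kappa(A)$ and the role of $n_\pm\neq 1$ are accurate elaborations but not substantively different.
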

    \begin{proof}
      Starting from any Levi-Civita connection $D_0$, we construct the Levi-Civita connection $D:=D_0+\kappa'(\div-\div_{D_0})$ which has $\div_{D}=\div_{D_0}+\kappa\kappa'(\div-\div_{D_0})=\div$.
    \end{proof}
    
\begin{defn} \label{d:LC} The space of Levi-Civita connections with a fixed divergence $\div$ will be denoted \[LC(\gm,\div).\]  This is an 
affine space over sections of 
    \[\on{ker}\tau\cap \on{ker}\kappa\cap (E\otimes(\w2V_+\oplus\w2V_-)),\]
    i.e.\ this space consists of tensors $A\in E\otimes E\otimes E$ satisfying
    \begin{equation*}
        A_{\alpha\beta\gamma}=-A_{\alpha\gamma\beta},\quad A_{\alpha a\hat b}=0,\quad A_{[\alpha\beta\gamma]}=0,\quad A\indices{^a_{ab}}=0,\quad A\indices{^{\hat a}_{\hat a\hat b}}=0.
    \end{equation*}
\end{defn}

We end this subsection with a proposition giving explicit formulas for the Lie derivative of a generalized (pseudo)metric both in terms of the bracket, and in terms of a choice of Levi-Civita connection, which will be crucial to what follows.

    \begin{prop}\label{prop:killing}
        If $\gm\in\ms M$ then for any $u\in\Gamma(E)$ and $a_\pm\in\Gamma(V_\pm)$ we have  \[(\mc L_u\gm)a_\pm=\pm2[u,a_\pm]_\mp\]
        and so in particular $(\mc L_u\gm) V_\pm\subset V_\mp$. If furthermore $D$ is Levi-Civita then
        \[(\mc L_u\gm)_{\alpha\beta}=2(D_{[\alpha}u_{\gamma]}\gm\indices{^\gamma_\beta}+D_{[\beta}u_{\gamma]}\gm\indices{_\alpha^\gamma}),\qquad (\mc L_u\gm)_{a\hat a}=(\mc L_u\gm)_{\hat aa}=-4D_{[a}u_{\hat a]}.\]
    \end{prop}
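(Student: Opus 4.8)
The plan is to establish the two bracket formulas first, then derive the connection formulas from them using the Levi-Civita property. For the first claim, recall from Definition \ref{d:Mtangent} that $\mc L_u \gm$, being the derivative along a family of generalized pseudometrics (the pullbacks $\Phi_s^* \gm$), lies in $T_\gm \ms M$ and hence anticommutes with $\gm$; thus $(\mc L_u \gm) V_\pm \subset V_\mp$ automatically, and it suffices to compute $(\mc L_u \gm) a_\pm$ modulo $V_\pm$, i.e.\ its $V_\mp$ component. Since $\gm a_\pm = \pm a_\pm$, the Leibniz rule for the generalized Lie derivative gives $\mc L_u(\gm a_\pm) = (\mc L_u \gm) a_\pm + \gm (\mc L_u a_\pm)$, so $(\mc L_u \gm) a_\pm = \pm \mc L_u a_\pm - \gm \mc L_u a_\pm = \pm [u, a_\pm] - \gm [u, a_\pm]$. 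Taking the $V_\mp$ component and using $\gm|_{V_\mp} = \mp \Id$, the two terms add: $[u,a_\pm]_\mp \mp (\mp [u,a_\pm]_\mp) = \pm 2 [u, a_\pm]_\mp$. (The $V_\pm$ component vanishes as expected: $\pm [u,a_\pm]_\pm - (\pm [u,a_\pm]_\pm) = 0$.) This gives the first formula.

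For the connection formulas, I would substitute a Levi-Civita connection $D$ in place of the bracket. Write $a = a_\pm \in \Gamma(V_\pm)$ and expand $[u, a]$ using the torsion-free identity \eqref{eq:torsion}: since $T_D = 0$, we have $[u,a] = D_u a - D_a u + \langle Du, a\rangle$. Now project onto $V_\mp$. Because $D$ is compatible with $\gm$, the term $D_u a$ lies in $\Gamma(V_\pm)$ and so drops out of the $V_\mp$ component; what survives is $(-D_a u + \langle Du, a\rangle)_\mp$. One then rewrites this in components. Lowering indices with the Courant pairing $\eta$ and using the defining relations of $LC(\gm, \div)$ — antisymmetry $A_{\alpha\beta\gamma} = -A_{\alpha\gamma\beta}$, the vanishing of mixed components, and $D\gm = 0$ — the expression $(\mc L_u \gm)_{\alpha\beta} = \pm 2 (\text{appropriate projection})$ collapses to the antisymmetrized combination $2(D_{[\alpha} u_{\gamma]} \gm\indices{^\gamma_\beta} + D_{[\beta} u_{\gamma]} \gm\indices{_\alpha^\gamma})$; the mixed-component specialization $(\mc L_u \gm)_{a\hat a} = -4 D_{[a} u_{\hat a]}$ then follows by restricting $\alpha = a \in V_+$, $\beta = \hat a \in V_-$, where $\gm$ acts as $+1$ and $-1$ respectively, combining the two terms.

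The main obstacle I anticipate is purely bookkeeping: correctly tracking the index-lowering conventions (raising/lowering is done with $\eta$, never with $\gm$, per Remark \ref{r:strictpositive}) and the factor $\pm$ from the eigenvalue of $\gm$ on $V_\pm$, so that the coefficients $2$ and $-4$ come out right. The conceptual content — using $D\gm = 0$ to kill the $D_u a$ term and using $T_D = 0$ to replace the bracket — is immediate; the risk is a sign or factor-of-two error when antisymmetrizing. A useful consistency check is that the trace of $(\mc L_u \gm)_{\alpha\beta}$ against $\gm^{\alpha\beta}$ should vanish (since $\mc L_u \gm$ anticommutes with $\gm$), which constrains the admissible form of the answer and validates the antisymmetrized expression obtained.
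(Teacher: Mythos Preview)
Your proposal is correct and follows essentially the same route as the paper. For the first formula the paper's computation is identical to yours, written compactly as $(\mc L_u\gm)a_\pm=(\pm\Id-\gm)\mc L_u a_\pm=\pm 2p_\mp[u,a_\pm]$. For the second, the paper uses the torsion-free identity in the operator form $\mc L_u=D_u-Du+(Du)^t$ and applies it directly to the tensor $\gm$ (with $D_u\gm=0$), obtaining the general $\alpha\beta$-formula in one line; your variant --- substituting the torsion-free bracket into the already-proved first formula and projecting --- is the same argument run through the mixed components, and the general $\alpha\beta$-expression then follows once you note that $2(D_{[\alpha}u_{\gamma]}\gm\indices{^\gamma_\beta}+D_{[\beta}u_{\gamma]}\gm\indices{_\alpha^\gamma})$ vanishes on the pure $(a,b)$ and $(\hat a,\hat b)$ blocks.
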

    \begin{proof}
        For the first claim we calculate
        \[(\mc L_u\gm)a_\pm=\mc L_u(\gm a_\pm)-\gm(\mc L_u a_\pm)=(\pm \Id-\gm)\mc L_u a_\pm=\pm 2p_\mp [u,a_\pm].\]
        For the second, since $D$ is torsion-free, we have
        \[\mc L_u=D_u-Du+(Du)^t.\]
        We then obtain
        \[(\mc L_u\gm)\indices{^\alpha_\beta}=-(D_\gamma u^\alpha)\gm\indices{^\gamma_\beta}+(D_\beta u^\gamma)\gm\indices{^\alpha_\gamma}+(D^\alpha u_\gamma)\gm\indices{^\gamma_\beta}-(D^\gamma u_\beta)\gm\indices{^\alpha_\gamma}.\qedhere\]
    \end{proof}

\subsection{Differential operators}

    \begin{defn} \label{d:Laplace}
      Let $D$ be a Levi-Civita connection for a generalized pseudometric $\gm\in\ms M$. Then we define the \emph{Laplace operator} $\Delta$ by 
      \[\Delta:=\gm^{\alpha\beta}D_\alpha D_\beta\colon \Gamma(E^{\otimes k})\to\Gamma(E^{\otimes k}).\]
    Note that on functions we have
    \[\Delta f=D_\alpha(\gm^{\alpha\beta} D_\beta f)=\div_D(\gm\differential f),\]
    and so the Laplace operator on functions is determined only by the generalized pseudometric and associated divergence operator.  When further the divergence arises from a half-density $\gs$, we denote the operator by $\gD_{\gm, \gs}$.
    \end{defn}

\begin{prop} \label{p:Laplacianelliptic}
    Given $\gm$ a generalized metric and $D$ any Levi-Civita connection, the operator $\Delta$ is elliptic.
\end{prop}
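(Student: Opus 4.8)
The plan is to show that the symbol of $\Delta = \gm^{\alpha\beta}D_\alpha D_\beta$ acting on $\Gamma(E^{\otimes k})$ is, up to a positive scalar, the identity on the fibers of $E^{\otimes k}$, which suffices for ellipticity. Since $\Delta$ is a second-order operator, only the top-order part of $D_\alpha D_\beta$ contributes to the symbol, and the connection terms (Christoffel-type corrections, which are $C^\infty(M)$-linear zeroth-order pieces) drop out. Hence for a covector $\xi \in T^*_x M$, the principal symbol is $\sigma_\xi(\Delta) = \gm^{\alpha\beta}\,(\rho^*\xi)_\alpha (\rho^*\xi)_\beta \cdot \Id_{E^{\otimes k}} = \langle \gm\, \rho^T\xi, \rho^T\xi\rangle\,\Id$, using that $D_\alpha$ acts on functions through the anchor (as in Lemma \ref{lem:torsion_on_functions}) and that $\differential = \rho^T \circ d$. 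So I would first record this symbol computation carefully, being explicit that the $E^{\otimes k}$-valued nature only contributes a tensoring with the identity.

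The heart of the matter is then to show $\langle \gm\, \rho^T\xi, \rho^T\xi\rangle > 0$ for every nonzero $\xi \in T^*_xM$. This is where the generalized metric hypothesis (as opposed to pseudometric) is essential. Write $\rho^T\xi = w_+ + w_-$ with $w_\pm \in (V_\pm)_x$; then $\langle \gm\,\rho^T\xi, \rho^T\xi\rangle = \langle w_+, w_+\rangle - \langle w_-, w_-\rangle$. By the definition of a generalized metric, $\langle\cdot,\cdot\rangle|_{V_+}$ is positive definite and $\rho|_{V_+}\colon V_+ \to TM$ is an isomorphism. I claim $\rho^T\xi$ lies entirely in $V_+$, i.e.\ $w_- = 0$: indeed, since $V_+$ and $V_-$ are orthogonal and $\langle\cdot,\cdot\rangle$ is non-degenerate on each, $V_-$ is the annihilator of $\rho(V_+) = TM$ under the pairing induced identification, so for any $v_- \in V_-$ we have $\langle \rho^T\xi, v_-\rangle = \xi(\rho(v_-))$; but one must instead argue directly: $\rho\circ\rho^T = 0$ from \eqref{eq:chain}, so $\rho^T\xi \in \ker\rho$, and $\ker\rho \cap V_+ = 0$ since $\rho|_{V_+}$ is injective, which forces... this needs care. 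The cleaner route: since $\langle w_-, \cdot\rangle$ pairs trivially with $\rho^T(T^*M)$ precisely when $\rho(w_-) = 0$, and $w_- \in V_-$ automatically has $\rho(w_-)$ arbitrary in $TM$; so instead I compute $\langle \rho^T\xi, v_+\rangle = \xi(\rho(v_+))$ for $v_+ \in V_+$, showing the $V_+$-component $w_+$ of $\rho^T\xi$ is the element of $V_+$ dual (via $\langle\cdot,\cdot\rangle|_{V_+}$) to the functional $v_+ \mapsto \xi(\rho v_+)$, which is nonzero when $\xi \neq 0$ since $\rho|_{V_+}$ is onto. Thus $w_+ \neq 0$, so $\langle w_+, w_+\rangle > 0$; and $\langle w_-, w_-\rangle < 0$ unless $w_- = 0$ (since $-\langle\cdot,\cdot\rangle|_{V_-}$ is positive definite, as $V_-$ is the orthogonal complement of the positive-definite $V_+$ in a pairing of the right signature). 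Hence $\langle\gm\,\rho^T\xi,\rho^T\xi\rangle = \langle w_+,w_+\rangle - \langle w_-, w_-\rangle \geq \langle w_+, w_+\rangle > 0$.

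The main obstacle, then, is purely linear-algebraic: correctly disentangling how $\rho^T$ interacts with the eigenbundle decomposition $E = V_+ \oplus V_-$ and confirming the sign. The key facts to assemble are (i) $\rho\circ\rho^T = 0$, (ii) $\rho|_{V_+}$ is an isomorphism onto $TM$ and $\langle\cdot,\cdot\rangle|_{V_+} > 0$, and (iii) $V_\pm$ are $\langle\cdot,\cdot\rangle$-orthogonal with $\langle\cdot,\cdot\rangle$ nondegenerate, forcing $\langle\cdot,\cdot\rangle|_{V_-}$ to be negative definite. From (i) and (ii), $\rho^T\xi \in \ker\rho$ meeting $V_+$ trivially does \emph{not} by itself kill $w_+$, so the functional-duality argument above is the one I would actually write: for any $v_+\in V_+$, $\langle\rho^T\xi, v_+\rangle = \langle \xi, \rho v_+\rangle$, which is nonzero for suitable $v_+$ whenever $\xi\neq 0$, giving $\langle w_+, v_+\rangle \neq 0$ for some $v_+$ hence $w_+\neq 0$. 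Once positivity of the symbol is established, ellipticity is immediate, and I would note in passing that strict positivity of $\gm$ is \emph{not} needed here — only the genuine (definite-on-$V_+$) generalized metric condition — which is consistent with the theorem being stated for generalized metrics rather than merely strictly positive ones.
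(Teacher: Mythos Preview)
Your symbol computation $\sigma_\xi(\Delta)=\langle\gm\,\rho^T\xi,\rho^T\xi\rangle\cdot\Id=(\langle w_+,w_+\rangle-\langle w_-,w_-\rangle)\cdot\Id$ is correct, as is your argument that $w_+\neq 0$ when $\xi\neq 0$. The gap is in your treatment of $w_-$. You assert that $\langle\cdot,\cdot\rangle|_{V_-}$ is negative definite ``as $V_-$ is the orthogonal complement of the positive-definite $V_+$ in a pairing of the right signature.'' But nothing in Definition~\ref{d:genmetric} forces the signature to match in this way: the pairing on $E$ may have signature $(p,q)$ with $p>\operatorname{rank}V_+$, so $V_-$ can carry positive directions. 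This is precisely the content of Remark~\ref{r:strictpositive}: $\langle\gm\cdot,\cdot\rangle>0$ is the extra \emph{strict positivity} hypothesis, not a consequence of the generalized metric axioms. So as written your argument establishes ellipticity only under strict positivity, contradicting your closing remark.

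The fix uses your fact (i), which you listed but then abandoned. From $\rho\circ\rho^T=0$ one has $\langle\rho^T\xi,\rho^T\xi\rangle=\xi(\rho\rho^T\xi)=0$, i.e.\ $\langle w_+,w_+\rangle+\langle w_-,w_-\rangle=0$, so the symbol equals $2\langle w_+,w_+\rangle>0$ with no hypothesis on $V_-$ at all. This is exactly the paper's mechanism, stated slightly differently: the paper observes that $D^\alpha D_\alpha$ is a first-order operator (its symbol $\langle\rho^T\xi,\rho^T\xi\rangle$ vanishes by $\rho\rho^T=0$), writes $\Delta=2D^aD_a-D^\alpha D_\alpha$, and concludes that the symbol of $\Delta$ is twice that of $D^aD_a$, namely $2\langle w_+,w_+\rangle$.
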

\begin{proof}
    We first observe that the differential operator $D^\alpha D_\alpha$ on $\Gamma(E^{\otimes k})$ is of first order.  Note that it suffices to show this on functions, i.e.\ for $k=0$.  Indeed, taking arbitrary $\sigma\in \ms H^*$, setting $e:=\div_D-\div_\sigma$, and using $\rho\circ \differential=0$ we get
    \[D^\alpha D_\alpha f=\div_D(\mc Df)=\langle e,\differential f\rangle,\]
    giving the claim.  Given this, we have $\Delta=D^aD_a-D^{\hat a}D_{\hat a}=2D^aD_a-D^\alpha D_\alpha$, so the symbol of $\Delta$ is twice that of $D^aD_a$, which is itself positive by the definition of generalized metric.
\end{proof}

In the case of the operator $\gD_{\gm,\gs}$, we obtain an explicit formula, showing that it equals the weighted Laplace operator associated to $g$ and $\gs$.
    
        \begin{lemma} \label{l:weightedlaplacian}
          For any $\gm\in\ms M$, $f\in C^\infty(M)$, $\sigma\in\ms H^*$ and $D\in LC(\gm,\div_\sigma)$ we have \[\rho(\gm\differential f)= \N f,\qquad \Delta_{\gm,\gs} f=2\sigma^{-1}L_{\N f}\sigma=\Delta_g f-2g^{-1}(df,d\varphi),\]
          where $\N$ is the Levi-Civita connection of $g$.
        \end{lemma}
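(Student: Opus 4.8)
The plan is to verify the three asserted identities by unwinding the definitions, using that $D$ is torsion-free and compatible with $\gm$, and that its divergence is $\div_\sigma$. First I would establish $\rho(\gm\differential f) = \N f$. By definition $\hat\rho\colon TM \to E$ identifies $TM$ with $V_+$, and for $u \in \Gamma(E)$ we have $\rho(u_+) = \rho(u)$ since $\rho|_{V_-}$ need not vanish but the decomposition $E = V_+ \oplus V_-$ has $V_- \subset \ker\rho$ precisely when the algebroid is transitive and $V_+ \to TM$ is an isomorphism — more carefully, $\rho\circ\hat\rho = \Id_{TM}$ and the induced metric satisfies $g(X,Y) = \tfrac12\langle\hat\rho X,\hat\rho Y\rangle$. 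Writing $\differential f = \rho^T df$, one checks $\gm\differential f$ is characterized by $\langle \gm\differential f, v\rangle = \langle \differential f, \gm v\rangle = \rho(\gm v)f = (df)(\rho(v_+ - v_-))$. Decomposing $\gm\differential f$ into $V_\pm$ components and using the isomorphism $\rho|_{V_+}$ together with the normalization factor $\tfrac12$ in the definition of $g$, I expect to find $\gm\differential f \in \Gamma(V_+)$ and $\rho(\gm\differential f) = g^{-1}(df,\cdot) = \N f$; this is the short computation and I would carry it out in components using the frame conventions $e_a$ for $V_+$.

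Next, for $\Delta_{\gm,\gs} f = 2\sigma^{-1}L_{\N f}\sigma$: by Definition \ref{d:Laplace} we have $\Delta_{\gm,\gs}f = \div_D(\gm\differential f)$, and since $D \in LC(\gm, \div_\sigma)$ this divergence equals $\div_\sigma(\gm\differential f)$. By Example following Definition \ref{d:divg}, $\div_\sigma u = 2\sigma^{-1}L_{\rho(u)}\sigma$, so applying this to $u = \gm\differential f$ and using the first identity $\rho(\gm\differential f) = \N f$ gives $\div_\sigma(\gm\differential f) = 2\sigma^{-1}L_{\N f}\sigma$ immediately. This step is essentially formal once the first identity is in hand — the only subtlety is confirming that $\div_D$ agrees with $\div_\sigma$ on the specific section $\gm\differential f$, which is exactly the defining property of $LC(\gm,\div_\sigma)$.

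Finally, for the identification with the weighted Laplacian: write $\sigma = e^{-\varphi}\sigma_g$ as in Definition \ref{d:inducedmetricdilaton}. Then $2\sigma^{-1}L_{\N f}\sigma = 2 e^{\varphi}L_{\N f}(e^{-\varphi}\sigma_g) = 2 L_{\N f}(\log\sigma_g) \cdot 1 \cdot 2 \ldots$ — more precisely, using the Leibniz rule, $2\sigma^{-1}L_{\N f}\sigma = 2\sigma_g^{-1}L_{\N f}\sigma_g - 2 L_{\N f}\varphi$. Now $\sigma_g$ is the Riemannian half-density, so $\sigma_g^2$ is the volume form and $2\sigma_g^{-1}L_{\N f}\sigma_g = \sigma_g^{-2}L_{\N f}\sigma_g^2 = \divg_g(\N f) = \Delta_g f$, the Riemannian Laplacian (Bochner sign convention). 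Meanwhile $2L_{\N f}\varphi = 2\, df(\N\varphi) = 2 g^{-1}(df, d\varphi)$. Combining yields $\Delta_{\gm,\gs}f = \Delta_g f - 2g^{-1}(df,d\varphi)$, as claimed.

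I expect the main obstacle to be the first identity $\rho(\gm\differential f) = \N f$: it requires carefully tracking the various identifications — $E \cong E^*$ via the pairing, the map $\hat\rho$ and its relation to $\rho|_{V_+}^{-1}$, and especially the normalization factor $\tfrac12$ in $g(X,Y) = \tfrac12\langle\hat\rho X,\hat\rho Y\rangle$, which has to conspire correctly with the factor arising from $\gm$ acting as $\pm\Id$ on $V_\pm$. Everything after that is bookkeeping with the divergence formula and the definition of the dilaton. I would do the first identity in a local frame adapted to the splitting $E = V_+ \oplus V_-$, where $\gm = \mathrm{diag}(\Id_{V_+}, -\Id_{V_-})$, which makes the computation transparent.
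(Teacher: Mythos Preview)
Your arguments for the second and third identities are correct and match the paper's approach: once $\rho(\gm\differential f)=\nabla f$ is established, $\Delta_{\gm,\sigma}f=\div_\sigma(\gm\differential f)=2\sigma^{-1}L_{\nabla f}\sigma$ is immediate, and expanding $\sigma=e^{-\varphi}\sigma_g$ via Leibniz gives the weighted Laplacian.

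The gap is in the first identity. Two of your expectations are false. First, $V_-\subset\ker\rho$ does \emph{not} follow from transitivity plus $\rho|_{V_+}$ being an isomorphism; in the exact case $V_-=\mathrm{graph}(-g)$ and $\rho|_{V_-}$ is itself an isomorphism onto $TM$. Second, $\gm\differential f$ does not lie in $V_+$: again in the exact case (with $E=TM\oplus T^*M$, $V_\pm=\mathrm{graph}(\pm g)$) one computes $\differential f=(0,df)$, $(\differential f)_\pm=(\pm\tfrac12\nabla f,\tfrac12 df)$, and hence $\gm\differential f=(\nabla f,0)\in TM$, which is in neither $V_+$ nor $V_-$.

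The missing ingredient is the identity $\rho\circ\differential=0$ (a general consequence of the Courant axioms, see the discussion after Definition~\ref{def:courant}). This gives $\rho((\differential f)_+)=-\rho((\differential f)_-)$, so $\rho(\gm\differential f)=\rho((\differential f)_+-(\differential f)_-)=2\rho((\differential f)_+)$. The paper then simply tests against an arbitrary $x\in TM$:
\[
g(\rho\gm\differential f,x)=2g(\rho(\differential f)_+,x)=\langle(\differential f)_+,\hat\rho x\rangle=\langle\differential f,\hat\rho x\rangle=\rho(\hat\rho x)f=xf,
\]
using $\hat\rho\rho|_{V_+}=\Id_{V_+}$, orthogonality of $V_\pm$, and the normalization $g(X,Y)=\tfrac12\langle\hat\rho X,\hat\rho Y\rangle$. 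Your pairing computation $\langle\gm\differential f,v\rangle=(df)(\rho(v_+-v_-))$ is correct and could be made to work by restricting to $v=\hat\rho x\in V_+$, but without $\rho\circ\differential=0$ you will not be able to pass from information about pairings to information about $\rho(\gm\differential f)$ itself.
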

        \begin{proof}
          Using $\rho\circ \differential=0$, for any $x\in TM$ we can write
          \[g(\rho\gm \differential f,x)=2g(\rho(\differential f)_+,x)=\langle(\differential f)_+,\hat \rho x\rangle=\langle \differential f,\hat \rho x\rangle=\rho(\hat \rho x)f=xf=\langle df,x\rangle.\]
          Similarly
          \[D_\alpha (\gm^{\alpha\beta}D_\beta f)=\div_\sigma \gm\differential f=(\div_{\sigma_g}-2\langle \differential\varphi,\slot\rangle)(\gm\differential f)=\Delta_g f-2\langle \N f,d\varphi\rangle,\]
          as claimed.
        \end{proof}

\section{Generalized Riemann and Ricci tensors} \label{s:curvature}

Building on the previous section, we discuss curvature tensors associated to a generalized metric, divergence operator, and choice of Levi-Civita connection. The core notions were introduced and investigated first in \cite{Siegel} which, in the language of the present text, corresponds to the setup of exact Courant algebroids with divergences coming from half-densities. This has been later generalized to general Courant algebroids as well as the double-field theoretic context in \cite{CSCW,GF14,GF19,Jurco:2016emw,SeveraValach1,Hohm:2011si}.  We emphasize here the key point that while the Riemann curvature tensor depends on a choice of Levi-Civita connection, for a generalized metric and choice of divergence operator there is a uniquely associated Ricci curvature (\cite{GF19}, cf.\ also \cite{Siegel,CSCW}).  Turning to the scalar curvature, we note that in prior literature a definition is given by appealing to associated Dirac operators (cf.\ \cite{CSCW,GRFbook}) or explicit frames \cite{SeveraValach2}.  We give here a direct definition in terms of the generalized curvature tensor. This has a precursor in \cite{Siegel} in the exact case with half-densities.  We show in Lemma \ref{lem:scalexplicit} that this definition again only depends on the generalized metric and divergence operator.  Using these concrete definitions, we build on \cite{SeveraValach2} and compute two key formulas showing how the Ricci and scalar curvatures change under a change in divergence operator (Propositions \ref{p:Riccidivergencechange}, \ref{p:scalardivchange}).
These play a key role in the proofs of the monotonicity formulas for generalized Ricci flow.

\subsection{Riemann tensor}
    \begin{defn}\label{def:grm}
        Let $E\to M$ be a Courant algebroid and $D$ a torsion-free Courant algebroid connection. Define the \emph{Riemann tensor} as
        \[\grm_D\colon \Gamma(E)\otimes\Gamma(E)\otimes\Gamma(E)\otimes\Gamma(E)\to C^\infty(M)\]
        \[\grm_D(w,z,x,y):=\tfrac12w^\delta y^\beta(x^\alpha[D_\alpha,D_\beta]z_\delta+z^\alpha[D_\alpha,D_\delta]x_\beta-(D_\alpha x_\beta) (D^\alpha z_\delta)).\]
    \end{defn}
    \begin{prop}
        $\grm_D$ is $C^\infty(M)$-multilinear, and hence gives a tensor $\grm_D\in\Gamma(E^{\otimes 4})$.
    \end{prop}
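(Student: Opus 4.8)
The plan is to verify $C^\infty(M)$-multilinearity in each of the four slots. Because $\grm_D$ is manifestly $\mathbb R$-multilinear and built from the connection $D$ using the pairing to raise and lower indices, it suffices to check that multiplying any one of $w,z,x,y$ by a function $f\in C^\infty(M)$ pulls the factor of $f$ out, i.e.\ that the ``derivative terms'' produced by the Leibniz rule cancel. I would treat the slots in the order that exploits the symmetries of the defining expression: the $w$ and $y$ slots are the easiest, since $w$ and $y$ enter the formula only through the bare components $w^\delta$, $y^\beta$ with no covariant derivatives falling on them, so $C^\infty(M)$-linearity there is immediate. The content is in the $z$ and $x$ slots.

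For the $x$ slot, replace $x$ by $fx$. The three terms in the bracket become (i) $fx^\alpha[D_\alpha,D_\beta]z_\delta$, which is already $f$ times the original; (ii) $z^\alpha[D_\alpha,D_\delta](fx_\beta)$, where the commutator of second-order operators acting on the product generates, besides $f\,z^\alpha[D_\alpha,D_\delta]x_\beta$, cross terms of the schematic form $(D_\alpha f)(D_\delta x_\beta)$, $(D_\delta f)(D_\alpha x_\beta)$, plus a term $x_\beta\,z^\alpha[D_\alpha,D_\delta]f$; and (iii) $-(D_\alpha(fx_\beta))(D^\alpha z_\delta) = -f(D_\alpha x_\beta)(D^\alpha z_\delta) - (D_\alpha f)x_\beta(D^\alpha z_\delta)$. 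The key input is Lemma \ref{lem:torsion_on_functions}: since $D$ is torsion-free, $[D_\alpha,D_\delta]f = \rho(\langle De_\alpha,e_\delta\rangle)f$, and the remark following that lemma records that $z^\alpha\,\rho(\langle De_\alpha,e_\delta\rangle)$ is tensorial; one then checks this surviving term, contracted against $w^\delta$, cancels against the leftover first-derivative cross terms. The symmetric-in-structure $z$ slot is handled identically using the same lemma, now with $x^\alpha[D_\alpha,D_\beta](fz_\delta)$ and the $-(D_\alpha x_\beta)(D^\alpha(fz_\delta))$ term; the prefactor $\tfrac12$ and the placement of indices are arranged precisely so that everything matches.

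The main obstacle is purely bookkeeping: keeping straight which index is contracted where after the Leibniz expansions, and confirming that the first-order derivative-of-$f$ terms coming from the $-(D x)(Dz)$ piece exactly match those produced by the two commutator terms $[D_\alpha,D_\beta]$ and $[D_\alpha,D_\delta]$ acting on the respective products. There is no conceptual difficulty — the torsion-freeness of $D$ (entering via Lemma \ref{lem:torsion_on_functions}) is the one structural fact needed, and the combinatorial coefficient $\tfrac12$ together with the antisymmetrized index positions in Definition \ref{def:grm} is what makes the cancellation work. Once multilinearity in all four slots is established, tensoriality is automatic and $\grm_D\in\Gamma(E^{\otimes 4})$ (using the identification $E\cong E^*$ via the pairing), completing the proof.
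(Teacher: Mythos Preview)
Your overall strategy matches the paper's: the $w$ and $y$ slots are immediate, and tensoriality in $x$ is checked using Lemma \ref{lem:torsion_on_functions}. However, your expansion of term (ii) contains an error. The commutator $[D_\alpha,D_\delta]$ is a \emph{derivation} (the first-order cross terms from $D_\alpha D_\delta(fx_\beta)$ and $D_\delta D_\alpha(fx_\beta)$ cancel upon subtraction), so
\[
z^\alpha[D_\alpha,D_\delta](fx_\beta)=f\,z^\alpha[D_\alpha,D_\delta]x_\beta + x_\beta\,z^\alpha[D_\alpha,D_\delta]f,
\]
with no terms of the form $(D_\alpha f)(D_\delta x_\beta)$ or $(D_\delta f)(D_\alpha x_\beta)$. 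The only non-$f$-linear contributions are therefore $\tfrac12 x_\beta\,z^\alpha[D_\alpha,D_\delta]f=\tfrac12 x_\beta\,z^\alpha\rho(\langle De_\alpha,e_\delta\rangle)f$ from (ii), via the lemma, and $-\tfrac12 x_\beta(D_\alpha f)(D^\alpha z_\delta)=-\tfrac12 x_\beta(D^\alpha z_\delta)\rho(e_\alpha)f$ from (iii). These cancel because $z^\alpha\rho(\langle De_\alpha,e_\delta\rangle)=\rho(\langle Dz,e_\delta\rangle-\differential z_\delta)=(D^\alpha z_\delta)\rho(e_\alpha)$, using $\rho\circ\differential=0$. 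Your proposed cancellation scheme, pairing phantom cross terms against pieces of (iii), would not close.

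For the $z$ slot, the paper does not repeat the computation but simply invokes the manifest symmetry of the defining expression under $x\leftrightarrow z$, $y\leftrightarrow w$.
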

    \begin{proof}
        The expression is clearly tensorial in $w$ and $y$ slots.
        Since $[D_\alpha,D_\beta]$ is a derivation, using Lemma \ref{lem:torsion_on_functions} we obtain
        \[\grm_D(e_\delta,z,fx,e_\beta)-f\grm_D(e_\delta,z,x,e_\beta)=\tfrac12z^\alpha x_\beta\rho(\langle De_\alpha,e_\delta\rangle)f-\tfrac12x_\beta (D^\alpha z_\delta)\rho(e_\alpha)f=0,\]
        due to $\rho(z^\alpha \langle D e_\alpha,e_\delta\rangle)=\rho(\langle Dz,e_\delta\rangle-\differential z_\delta)=(D^\alpha z_\delta)\rho(e_\alpha)+0$. Linearity in the $z$-slot follows from the symmetry of the expression when exchanging $x\leftrightarrow z$ and $y\leftrightarrow w$.
    \end{proof}
    \begin{prop}
      We have $\grm_{\alpha\beta\gamma\delta}=\grm_{[\alpha\beta]\gamma\delta}=\grm_{\alpha\beta[\gamma\delta]}=\grm_{\delta\gamma\beta\alpha}$.
    \end{prop}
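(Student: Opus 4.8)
The three identities reduce to two independent facts, and the plan is to handle them in the order: first the ``pair symmetry'' $\grm_{\alpha\beta\gamma\delta}=\grm_{\delta\gamma\beta\alpha}$, then the antisymmetry $\grm_{\alpha\beta\gamma\delta}=\grm_{[\alpha\beta]\gamma\delta}$ in the first two slots, and finally deduce $\grm_{\alpha\beta\gamma\delta}=\grm_{\alpha\beta[\gamma\delta]}$ formally from the first two.

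For the pair symmetry I would simply write out $\grm_D(w,z,x,y)$ and $\grm_D(y,x,z,w)$ from Definition~\ref{def:grm} and relabel the two summed indices $\beta\leftrightarrow\delta$: the terms $x^\alpha[D_\alpha,D_\beta]z_\delta$ and $z^\alpha[D_\alpha,D_\delta]x_\beta$ then interchange, and $-(D_\alpha x_\beta)(D^\alpha z_\delta)$ becomes $-(D_\alpha z_\delta)(D^\alpha x_\beta)$, which agrees with the original term after relabelling the contracted derivative index and using that $\langle\cdot,\cdot\rangle$ is symmetric. So $\grm_D(w,z,x,y)=\grm_D(y,x,z,w)$ identically, which is exactly $\grm_{\alpha\beta\gamma\delta}=\grm_{\delta\gamma\beta\alpha}$; this step uses only compatibility of $D$ with the pairing (it is the same symmetry already used in the proof that $\grm_D$ is tensorial).

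For the antisymmetry in the first two slots I would compute $\grm_D(w,z,x,y)+\grm_D(z,w,x,y)$ and show it vanishes. The middle term acquires the coefficient $w^\delta z^\alpha+z^\delta w^\alpha$, symmetric in $\alpha\leftrightarrow\delta$, hence is killed against the antisymmetry of $[D_\alpha,D_\delta]$. For the remaining two terms, using that $[D_\alpha,D_\beta]$ and $D_\alpha$ are derivations, the $w$- and $z$-contributions assemble into $x^\alpha y^\beta[D_\alpha,D_\beta]\langle w,z\rangle$ and $-y^\beta(D_\alpha x_\beta)\,D^\alpha\langle w,z\rangle$, i.e.\ into covariant derivatives of the \emph{function} $\langle w,z\rangle$. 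This is where torsion-freeness enters: Lemma~\ref{lem:torsion_on_functions} gives $[D_\alpha,D_\beta]\langle w,z\rangle=\rho(\langle De_\alpha,e_\beta\rangle)\langle w,z\rangle$, and combining this with $\rho\circ\differential=0$ and the identity $\rho(\langle Du,v\rangle)=u^\alpha v^\beta\rho(\langle De_\alpha,e_\beta\rangle)$ from the remark following that lemma, both contributions collapse to $\pm\,\rho(\langle Dx,y\rangle)\langle w,z\rangle$ and cancel. Hence $\grm_D(w,z,x,y)=-\grm_D(z,w,x,y)$, which is $\grm_{\alpha\beta\gamma\delta}=\grm_{[\alpha\beta]\gamma\delta}$.

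Finally, applying the pair symmetry to $\grm_{\alpha\beta\delta\gamma}$ gives $\grm_{\gamma\delta\beta\alpha}$; the first-pair antisymmetry gives $-\grm_{\delta\gamma\beta\alpha}$; and the pair symmetry once more gives $-\grm_{\alpha\beta\gamma\delta}$. Thus $\grm_{\alpha\beta\delta\gamma}=-\grm_{\alpha\beta\gamma\delta}$, which is the remaining antisymmetry (alternatively it follows by the computation above verbatim, exchanging the roles of $\{w,z\}$ and $\{x,y\}$ and obtaining a cancellation of $\pm\,\rho(\langle Dz,w\rangle)\langle x,y\rangle$). The only genuinely non-routine point is the first-pair antisymmetry, and within it the key observation is to reorganize the two terms that do not vanish by index symmetry as total covariant derivatives of $\langle w,z\rangle$ before invoking the torsion-free identity; the rest is index bookkeeping.
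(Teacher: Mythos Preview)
Your proof is correct and follows essentially the same route as the paper. The only cosmetic differences are the order (the paper proves the first-pair antisymmetry before the pair symmetry) and that the paper sets $w=z$ and shows $\grm(w,w,x,e_\beta)=0$ rather than polarizing via $\grm(w,z,x,y)+\grm(z,w,x,y)$; the key step---assembling the surviving terms into covariant derivatives of the function $\langle w,z\rangle$ and cancelling them via Lemma~\ref{lem:torsion_on_functions} together with $\rho\circ\differential=0$---is identical.
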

    \begin{proof}
      The first equality follows from Lemma \ref{lem:torsion_on_functions}:
      \begin{align*}
        \grm(w,w,x,e_\beta)&=\tfrac12 w^\delta x^\alpha[D_\alpha,D_\beta]w_\delta-\tfrac12 w^\delta (D_\alpha x_\beta)(D^\alpha w_\delta)\\
        &=\tfrac14 x^\alpha[D_\alpha,D_\beta](w^\delta w_\delta)-\tfrac14 (D_\alpha x_\beta)D^\alpha(w^\delta w_\delta)\\
        &=\tfrac14\rho(\langle D x,e_\beta\rangle)w^2-\tfrac14\rho((D_\alpha x_\beta)e^\alpha)w^2=0.
      \end{align*}
      The symmetry $\grm_{\alpha\beta\gamma\delta}=\grm_{\delta\gamma\beta\alpha}$ follows directly from the definition. Together this gives
      \[\grm_{\alpha\beta\gamma\delta}=\grm_{\delta\gamma\beta\alpha}=-\grm_{\gamma\delta\beta\alpha}=-\grm_{\alpha\beta\delta\gamma}.\qedhere\]
    \end{proof}

\subsection{Ricci tensor}

    To our knowledge Siegel first introduced and studied this object in \cite{Siegel}.  His approach was purely local corresponds to the exact case with divergence determined by a half-density.  Much later the concept of generalized Ricci tensor was set in the global framework of exact Courant algebroids in \cite{CSCW}, giving a substantial simplification of type II supergravity. An extension to the transitive case was then studied in \cite{GF14}. In \cite{GF19} the notion was finally extended to the general Courant algebroid case, and it was noted that the structure depends only on the generalized metric and the divergence of the connection. Very shortly after the latter, \cite{Jurco:2016emw} appeared, where various other aspects of the general setup were investigated.  In \cite{SeveraValach1} another road was taken and a more restricted notion (depending on the generalized metric and an ordinary vector bundle connection) of a generalized Ricci tensor was defined and used to prove the compatibility of general Poisson--Lie T-duality \cite{Klimcik:1995ux} with 1-loop renormalisation group flow. This was later extended in \cite{SeveraValach2} to the case of equivariant Poisson--Lie T-duality \cite{Klimcik:1996np}, where the Ricci tensor was used in the form derived below in Proposition \ref{prop:grc_explicit}.

    For the sake of maximal clarity and in order to capture the most general situation, we will now define the Ricci tensor in two steps --- as a contraction of the Riemann tensor, followed by a projection.
    
    \begin{defn}[\cite{Siegel,CSCW}]
      Let $E$ be a Courant algebroid and $D$ a torsion-free Courant algebroid connection. Define first the \emph{full Ricci tensor} $\fgrc\in\Gamma(\on{End}(E))$ by 
        \begin{equation*}
          \langle v,\fgrc(u)\rangle:=\grm(e^\alpha,v,e_\alpha,u),\qquad \fgrc\indices{^\gamma_\beta}=\grm\indices{^\alpha^\gamma_{\alpha\beta}}.
        \end{equation*}
      The \emph{Ricci tensor} $\grc\in\Gamma(\on{End}(E))$ is then defined as
      \[\grc:=\gm[\gm,\fgrc]=\fgrc-\gm\fgrc\gm,\qquad \grc\indices{^\alpha_\delta}=\fgrc\indices{^\alpha_\delta}-\gm\indices{^\alpha_\beta}\fgrc\indices{^\beta_\gamma}\gm\indices{^\gamma_\delta}.\]
      Explicitly,
      \[\grc_{a\hat a}=\grc_{\hat a a}=2\fgrc_{a\hat a}=2\fgrc_{\hat aa},\qquad \grc_{ac}=\grc_{\hat a\hat c}=0.\]
      Invariantly, $\grc$ is (twice) the projection of $\fgrc$ onto $(V_+\otimes V_-)\oplus (V_-\otimes V_+)$.  Note that both $\fgrc_{\alpha\beta}$ and $\grc_{\alpha\beta}$ are symmetric. 
    \end{defn}

    The next proposition links the present approach with that of \cite{SeveraValach2}, and extends the corresponding formula from \cite{Siegel}.
    \begin{prop}\label{prop:grc_explicit} 
      For any $D\in LC(\gm,\div)$ the Ricci tensor satisfies
      \begin{align*}
        \langle v_-,\grc(u_+)\rangle&=v_-^{\hat a}[D_a,D_{\hat a}]u_+^a+u_+^{a}[D_{\hat a},D_{a}]v_-^{\hat a}\\
        &=\div\gm[v_-,u_+]-\rho(v_-)\div u_+-\rho(u_+)\div v_--2\on{Tr}_{V_+}[[\cdot, v_-]_-,u_+]_+
      \end{align*}
      If $\div$ is compatible with $\gm$ then the expression simplifies to
      \[\tfrac12\langle v_-,\grc(u_+)\rangle=v_-^{\hat a}[D_a,D_{\hat a}]u_+^a=\div[v_-,u_+]_+-\rho(v_-)\div u_+-\on{Tr}_{V_+}[[\cdot, v_-]_-,u_+]_+.\]
      In particular, $\grc$ does not depend on the choice of representative $D\in LC(\gm, \div)$. 
    \end{prop}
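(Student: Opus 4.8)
The plan is to evaluate the definition of $\grc$ in a frame adapted to the splitting $E=V_+\oplus V_-$, to use that every $D\in LC(\gm,\div)$ preserves each subbundle $V_\pm$, and then to convert all covariant derivatives that appear into Courant brackets and the divergence $\div$, using Proposition~\ref{prop:lcbracket} together with the identity $\div_D=\div$.

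First I would reduce to the full Ricci tensor. Since $\gm u_+=u_+$, $\gm v_-=-v_-$, $\gm^T=\gm$ and $\grc=\fgrc-\gm\fgrc\gm$, one gets $\langle v_-,\grc(u_+)\rangle=2\langle v_-,\fgrc(u_+)\rangle=2\,\grm_D(e^\alpha,v_-,e_\alpha,u_+)$, summed over a frame of $E$. Inserting the formula for $\grm_D$ from Definition~\ref{def:grm} and splitting the contracted frame sum $\sum_\alpha e^\alpha\otimes e_\alpha$ into its $V_+$- and $V_-$-parts, one is left with terms of three kinds: $\langle v_-,[D_\bullet,D_\bullet]u_+\rangle$, $\langle u_+,[D_\bullet,D_\bullet]v_-\rangle$, and products of the form $(D_\bullet x)(D^\bullet z)$. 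Because $D$ preserves $\Gamma(V_\pm)$, the section $D_\alpha u_+$ lies in $\Gamma(V_+)$ and $D_\alpha v_-$ in $\Gamma(V_-)$, so most index combinations vanish by the orthogonality of $V_+$ and $V_-$; the curvature symmetries $\grm_{\alpha\beta\gamma\delta}=\grm_{[\alpha\beta]\gamma\delta}=\grm_{\alpha\beta[\gamma\delta]}=\grm_{\delta\gamma\beta\alpha}$ dispose of the remaining ones, and what survives is exactly the first displayed identity $\langle v_-,\grc(u_+)\rangle=v_-^{\hat a}[D_a,D_{\hat a}]u_+^a+u_+^a[D_{\hat a},D_a]v_-^{\hat a}$.

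Next I would rewrite each of the two contracted commutators in $D$-free form. The off-diagonal covariant derivatives are brackets by Proposition~\ref{prop:lcbracket}: $D_{e_{\hat a}}u_+=[e_{\hat a},u_+]_+$ and $D_{e_a}v_-=[e_a,v_-]_-$; and a frame-contracted outer derivative is a divergence, since for $X\in\Gamma(V_+)$ one has $D_\alpha X^\alpha=D_a X^a=\div_D X=\div X$ (and similarly on $V_-$). The ``$D$ of $D$'' products then become the traces $\on{Tr}_{V_+}[[\cdot,v_-]_-,u_+]_+$, using Proposition~\ref{prop:lcbracket} twice so that $[[a_+,v_-]_-,u_+]_+=D_{D_{a_+}v_-}u_+$, while the remaining pieces become $\div$ applied to brackets. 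Combining them with the help of $[u_+,v_-]=-[v_-,u_+]$ (from axiom~(4) of Definition~\ref{def:courant} and $\langle u_+,v_-\rangle=0$), the identity $\gm[v_-,u_+]=[v_-,u_+]_+-[v_-,u_+]_-$, and the symmetry $\on{Tr}_{V_-}[[\cdot,u_+]_+,v_-]_-=\on{Tr}_{V_+}[[\cdot,v_-]_-,u_+]_+$ (which follows by rewriting both traces, via Proposition~\ref{prop:lcbracket}, as the same contraction of $Du_+$ against $Dv_-$), one arrives at the second displayed formula. Since its right-hand side involves $D$ only through $\div_D=\div$, this already proves that $\grc$ is independent of the choice of representative in $LC(\gm,\div)$.

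Finally, for the compatible case: if $\div$ is compatible with $\gm$ then $Z_{\div}$ is generalized Killing, so $\mc L_{Z_{\div}}$ preserves $\Gamma(V_+)$ and hence $\langle\mc L_{Z_{\div}}u_+,v_-\rangle=0$; by Proposition~\ref{prop:divergence_automorphism_action} this reads $\div[u_+,v_-]=\rho(u_+)\div v_--\rho(v_-)\div u_+$. Substituting this relation into the second displayed formula and simplifying yields $\tfrac12\langle v_-,\grc(u_+)\rangle=\div[v_-,u_+]_+-\rho(v_-)\div u_+-\on{Tr}_{V_+}[[\cdot,v_-]_-,u_+]_+$; and inserting the same relation back into the first displayed identity shows its two contracted commutators coincide, giving $\tfrac12\langle v_-,\grc(u_+)\rangle=v_-^{\hat a}[D_a,D_{\hat a}]u_+^a$. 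I expect the main difficulty to be the bookkeeping in the first two steps: correctly tracking the connection-correction terms hidden in the notation $[D_\alpha,D_\beta]$ and in the product $-(D_\alpha x_\beta)(D^\alpha z_\delta)$ of Definition~\ref{def:grm}, and matching them, with the right signs and factors of $2$, against the $\on{Tr}_{V_+}$ term; the rest is bilinear algebra with the projections $p_\pm$.
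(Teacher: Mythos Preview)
Your proposal is correct and follows essentially the same route as the paper: reduce $\langle v_-,\grc(u_+)\rangle$ to the two contracted commutators using the definition of $\grm_D$ and $D$-compatibility with $V_\pm$, then unpack each commutator via Proposition~\ref{prop:lcbracket} and $\div_D=\div$ to get the $D$-free formula, and finally invoke Proposition~\ref{prop:divergence_automorphism_action} for the compatible case. The paper's proof is slightly terser in the first step and phrases the trace symmetry you mention as ``cyclicity of the trace,'' but the argument is the same.
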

    \begin{proof}
    First,
    $\langle v_-,\grc(u_+)\rangle =-2\grm(e^\alpha,u_+,v_-,e_\alpha)=v_-^{\hat a}[D_a,D_{\hat a}]u_+^a+u_+^{a}[D_{\hat a},D_{a}]v_-^{\hat a}$.
      Since $\div u_+=D_\alpha u_+^\alpha=D_a u_+^a$, we have
        \begin{align*}
            v_-^{\hat a}[D_a,D_{\hat a}]u_+^a&=D_a(v_-^{\hat a}D_{\hat a}u_+^a)-(D_av_-^{\hat a})(D_{\hat a}u_+^a)-v_-^{\hat a}D_{\hat a}D_au^a_+\\
            &=D_a[v_-,u_+]^a-[e_a,v_-]^{\hat a}(D_{\hat a}u_+^a)-\rho(v_-)\div u_+\\
            &=\div[v_-,u_+]-(D_{[e_a,v_-]_-}u_+)^a-\rho(v_-)\div u_+\\
            &=\div[v_-,u_+]_+-[[e_a,v_-]_-,u_+]_+^a-\rho(v_-)\div u_+.
        \end{align*}
      Similarly,
      \begin{align*}
        u_+^a[D_{\hat a},D_a]v_-^{\hat a}&=\div[u_+,v_-]_--[[e_{\hat a},u_+]_+,v_-]_-^{\hat a}-\rho(u_+)\div v_-\\
        &=-\div[v_-,u_+]_--[[e_a,v_-]_-,u_+]_+^a-\rho(u_+)\div v_-,
      \end{align*}
      due to the cyclicity of the trace. Finally, using Proposition \ref{prop:divergence_automorphism_action}, the difference yields
      \[v_-^{\hat a}[D_a,D_{\hat a}]u_+^a-u_+^a[D_{\hat a},D_a]v_-^{\hat a}=\div[v_-,u_+]-\rho(v_-)\div u_++\rho(u_+)\div v_-=\langle \mc L_{\div} v_-,u_+\rangle,\]
      which vanishes if $\div$ is compatible with $\gm$.
    \end{proof}
    \begin{nota}
        Based on the Proposition and for typographical convenience, whenever we need to display the particular dependence of the Ricci tensor, we shall denote it either by $\grc_{\gm,\div}$ or $\grc(\gm,\div)$. In the case when $\div=\div_\sigma$, we will simply use $\grc_{\gm,\sigma}$ or $\grc(\gm,\sigma)$.
    \end{nota}
    
    \begin{prop} \label{p:Riccidivergencechange}
      $\grc_{\gm,\div+\langle e,\cdot\rangle}-\grc_{\gm,\div}=-\tfrac12\mc L_{\gm \!e}\gm$.
    \end{prop}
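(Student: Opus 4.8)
The plan is to compute both sides using the explicit formula for the Ricci tensor from Proposition~\ref{prop:grc_explicit}, comparing the two divergences $\div$ and $\div' := \div + \langle e, \cdot\rangle$. Since the Ricci tensor is block off-diagonal (only its $V_+ \otimes V_-$ components are nonzero), it suffices to evaluate $\langle v_-, \grc(u_+)\rangle$ for both divergences and take the difference. The first line of Proposition~\ref{prop:grc_explicit} expresses this in terms of $[D_a, D_{\hat a}]$ acting on components, which depends on the Levi-Civita connection, so instead I would use the second line, which is written purely in terms of the divergence operator and the Courant bracket: $\langle v_-,\grc(u_+)\rangle = \div\,\gm[v_-,u_+] - \rho(v_-)\div u_+ - \rho(u_+)\div v_- - 2\on{Tr}_{V_+}[[\cdot, v_-]_-, u_+]_+$. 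The last trace term is independent of the divergence, so it cancels in the difference.

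The key computation is then to expand $\div'\,\gm[v_-,u_+] - \rho(v_-)\div' u_+ - \rho(u_+)\div' v_-$ minus the same with $\div$. Writing $\div' - \div = \langle e, \cdot\rangle$, the difference becomes $\langle e, \gm[v_-,u_+]\rangle - \rho(v_-)\langle e, u_+\rangle - \rho(u_+)\langle e, v_-\rangle$. I would rewrite $\langle e, \gm[v_-,u_+]\rangle = \langle \gm e, [v_-, u_+]\rangle$ using symmetry of $\gm$, and then recognize the whole expression as $-\langle \mc L_{\gm e} \gm$ applied appropriately$\rangle$: indeed Proposition~\ref{prop:divergence_automorphism_action} gives $\langle \mc L_{Z_{\div}} u, v\rangle = \div[u,v] - \rho(u)\div v + \rho(v)\div u$ for the divergence $\widetilde{\div}(\cdot) := \langle \gm e, \cdot\rangle$ (which has $\widetilde{\div} - \widetilde{\div} = \langle \gm e, \cdot\rangle$ as its defining section, up to the $\sigma$-normalization that drops out of $Z$). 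Matching indices, the difference in $\langle v_-, \grc(u_+)\rangle$ equals $\langle \mc L_{Z_{\langle \gm e, \cdot\rangle}} v_-, u_+\rangle$ with a sign, which by Proposition~\ref{prop:killing} (first claim, $(\mc L_w \gm)a_\pm = \pm 2[w, a_\pm]_\mp$, so $(\mc L_w \gm)_{a\hat a}$ pairs $V_+$ against $V_-$) is precisely $-\tfrac12 (\mc L_{\gm e}\gm)(u_+, v_-)$ after tracking the factor of $2$ between the $\grc$ and $\fgrc$ normalizations.

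The main obstacle I anticipate is bookkeeping of normalization constants and the precise sense in which $\langle e, \cdot\rangle$, as an element of $\Gamma(E^*) \cong \Gamma(E)$, gives rise to the vector field $Z$ appearing in Proposition~\ref{prop:divergence_automorphism_action}: that proposition is stated for $Z_{\div}$ associated to a full divergence operator, whereas here I want the infinitesimal automorphism generated by the \emph{section} $e$ (or $\gm e$) directly via Proposition~\ref{prop:sections_automorphisms}. These agree because $Z_{\div} = Z_{\div - \div_\sigma}$ and $\div' - \div = \langle e, \cdot\rangle$ corresponds exactly to shifting the section by $e$; one must check the identification $E \cong E^*$ is applied consistently. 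Once that is pinned down, together with the index conventions for $(\mc L_u\gm)_{\alpha\beta}$ from Proposition~\ref{prop:killing}, the identity $\grc_{\gm, \div + \langle e, \cdot\rangle} - \grc_{\gm, \div} = -\tfrac12 \mc L_{\gm e}\gm$ follows by matching the off-diagonal blocks on both sides, the diagonal blocks being zero on each side by construction.
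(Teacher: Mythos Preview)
Your proposal is correct and follows essentially the same route as the paper. You use the divergence-only formula from Proposition~\ref{prop:grc_explicit} to isolate the difference as $\langle e,\gm[v_-,u_+]\rangle-\rho(v_-)\langle e,u_+\rangle-\rho(u_+)\langle e,v_-\rangle$, then rewrite in terms of $\gm e$ and identify the result with $-\tfrac12\langle v_-,(\mc L_{\gm e}\gm)u_+\rangle$ via Proposition~\ref{prop:killing}; this is exactly what the paper does.

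Two small remarks. First, the step you flag as an obstacle is actually simpler than invoking Proposition~\ref{prop:divergence_automorphism_action}: since $\gm u_+ = u_+$ and $\gm v_- = -v_-$, one has $\langle e,u_+\rangle = \langle \gm e,u_+\rangle$ and $\langle e,v_-\rangle = -\langle \gm e,v_-\rangle$, which immediately converts the expression into
\[
\langle \gm e,[v_-,u_+]\rangle-\rho(v_-)\langle \gm e,u_+\rangle+\rho(u_+)\langle \gm e,v_-\rangle.
\]
The paper then reduces this to $\langle[\gm e,v_-],u_+\rangle$ by a direct application of Courant axioms (3) and (4), rather than passing through $Z_{\div}$; your route via Proposition~\ref{prop:divergence_automorphism_action} amounts to the same identity, so there is no real bookkeeping hazard. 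Second, the paper closes by noting that both sides are symmetric with vanishing $V_+\otimes V_+$ and $V_-\otimes V_-$ parts, which is your final observation as well.
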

    \begin{proof}
        From Propositions \ref{prop:killing} and \ref{prop:grc_explicit} we have
        \begin{align*}
            \langle v_-,(\text{LHS}) u_+\rangle&=\langle e,\gm[v_-,u_+]\rangle-\rho(v_-)\langle e,u_+\rangle-\rho(u_+)\langle e,v_-\rangle\\
            &=\langle \gm \!e,[v_-,u_+]\rangle-\rho(v_-)\langle \gm\!e,u_+\rangle+\rho(u_+)\langle \gm \! e,v_-\rangle\\
            &=-\langle [v_-,\gm \!e],u_+\rangle+\rho(u_+)\langle \gm\!e,v_-\rangle=\langle [\gm\!e,v_-],u_+\rangle=-\langle v_-,[\gm\!e,u_+]\rangle\\
            &=-\tfrac12\langle v_-,(\mc L_{\gm \!e}\gm)u_+\rangle.
        \end{align*}
        The result then follows from the fact that both the LHS in the proposition and $\mc L_{\gm \!e}\gm$ are symmetric and have vanishing $V_+\otimes V_+$ and $V_-\otimes V_-$ parts.
    \end{proof}
    
    \begin{ex}
        Let $E$ be an exact Courant algebroid with a generalized metric $\gm$, corresponding to a pair $(g,H)$. Let $\div$ be the divergence w.r.t.\ the half-density given by $g$. Then
        \begin{align}\label{eq:fgrc_comp}
            \langle u_+,\fgrc(v_-)\rangle=\rc^+(\rho(u_+),\rho(v_-)),
        \end{align}
        where $\rc^+$ is the Ricci tensor for the connection $\nabla^+$ (see Example \ref{ex:bismut}). Explicitly, if $\rc$ denotes the classic Ricci tensor of $g$, we have
        \[\rc^+_{ij}=\rc_{ij}-\tfrac14H_{ikl}H_j{}^{kl}-\tfrac12 (d^*_g H)_{ij}.\]
        Similarly
        \begin{align}\label{eq:grc_comp}
            \rho\circ \grc=\rc^+\circ\rho\colon V_-\to TM,
        \end{align}
        where again the endomorphism $\rc^+$ is defined by $g(x,\rc^+(y)):=\rc^+(x,y)$. Note that when passing from \eqref{eq:fgrc_comp} and \eqref{eq:grc_comp}, the relative factor of 1/2 between the relevant components of $\grc$ and $\fgrc$ is compensated by the factor of 2 between the metric on $TM$ and $V_+$ (see Definition \ref{d:inducedmetricdilaton}).

        More generally, in the case of a nontrivial dilaton, i.e.\ for $\sigma=e^{-\varphi}\sigma_g$, we have
        \[\rho\circ \grc=(\rc^++\,i_{\nabla \varphi}H+ L_{\nabla \varphi}g)\circ \rho\colon V_-\to TM,\]
        where $\nabla$ is the Levi-Civita connection for $g$. Similarly, denoting the transpose by ${}^T$, one derives
        \begin{equation}\label{eq:rhoricciplus}
            \rho\circ \grc=-(\rc^++\,i_{\nabla \varphi}H+ L_{\nabla \varphi}g)^T\circ \rho\colon V_+\to TM.
        \end{equation}
    \end{ex}

\subsection{Scalar curvature}

    \begin{defn}
      Let $E$ be a Courant algebroid with a generalized pseudometric $\gm$ and a Levi-Civita connection $D$ with divergence $\div$. We define the \emph{scalar curvature} as
      \[\gr:=\on{Tr}(\gm\fgrc)=\gm\indices{^\beta_\alpha}\grm\indices{^{\gamma \alpha}_{\gamma\beta}}=\grm\indices{^{ab}_{ab}}-\grm\indices{^{\hat a\hat b}_{\hat a\hat b}},\]
      the final equality following since by the definition of $\grm$ one has
      \[\grm\indices{^{a\hat a}_{a\hat a}}=0.\]
    \end{defn}

    \begin{defn}
      We say that a frame $\{e_\alpha\}=\{e_a,e_{\hat a}\}$ adapted to the subbundle decomposition $E=V_+\oplus V_-$ is \emph{admissible} if $\langle e_\alpha,e_\beta\rangle$ are constant functions. For such a frame it follows that the \emph{structure coefficients}
    \[c_{\alpha\beta\gamma}:=\langle [e_\alpha,e_\beta],e_\gamma\rangle\]
    are then completely antisymmetric.
    \end{defn}

    The next lemma again provides a link to \cite{SeveraValach2}, and extends the corresponding formula from \cite{Siegel}.
    \begin{lemma}\label{lem:scalexplicit}
      In an admissible frame we have
      \begin{align*}
      \gr&=-(\div e_\alpha)(\div\gm \!e^\alpha)-2\rho(e_\alpha)\div\gm \!e^\alpha+\tfrac14\gm\indices{^\alpha_\delta}c_{\alpha\beta\gamma}c^{\delta\beta\gamma}-\tfrac1{12}\gm\indices{^\alpha_\delta}\gm\indices{^\beta_\epsilon}\gm\indices{^\gamma_\zeta}c_{\alpha\beta\gamma}c^{\delta\epsilon\zeta}.
      \end{align*}
      In particular $\gr$ only depends on the connection via its divergence.
    \end{lemma}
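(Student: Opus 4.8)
The plan is to compute the scalar curvature directly in an admissible frame, using the explicit form of the Levi-Civita connection whose structure is controlled by the torsion-freeness and the compatibility with $\gm$. First I would fix an admissible frame $\{e_\alpha\}$, so that $\langle e_\alpha,e_\beta\rangle$ is constant and the structure coefficients $c_{\alpha\beta\gamma}=\langle[e_\alpha,e_\beta],e_\gamma\rangle$ are totally antisymmetric. I would then write a general Levi-Civita connection $D$ in this frame via its connection coefficients $D_{\alpha\beta\gamma}:=\langle e_\gamma, D_{e_\alpha}e_\beta\rangle$. Metric compatibility forces antisymmetry in the last two indices, $D_{\alpha\beta\gamma}=-D_{\alpha\gamma\beta}$, and the vanishing-torsion condition \eqref{eq:torsion}, combined with $T_D\in\w3E^*$ (so that the totally antisymmetric part is fixed), pins down $D_{\alpha\beta\gamma}$ up to the space $\ker\tau\cap(E\otimes(\w2V_+\oplus\w2V_-))$; the divergence then fixes the trace part via $\kappa'$. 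Concretely, torsion-freeness gives $D_{\alpha\beta\gamma}-D_{\beta\alpha\gamma}-c_{\alpha\beta\gamma}+(\text{antisymmetrization terms})=0$, and together with the last-two-index antisymmetry one solves for $D$ in terms of $c$, the divergence $\div$, and the projectors $p_\pm$.

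Next I would substitute this expression for $D$ into the definition $\gr=\grm\indices{^{ab}_{ab}}-\grm\indices{^{\hat a\hat b}_{\hat a\hat b}}$. Using Definition \ref{def:grm}, each term $\grm(e^\alpha,e_\beta,e_\alpha,e_\beta)$ expands into commutators $[D_\alpha,D_\beta]$ acting on frame components plus a quadratic term $(D_\alpha x_\beta)(D^\alpha z_\delta)$; since we work in an admissible frame the commutator $[D_{e_\alpha},D_{e_\beta}]$ on frame sections is purely algebraic in the $D_{\alpha\beta\gamma}$ and the $c$'s (the $\rho$-derivative pieces only hit the genuinely non-constant part, which here organizes into the divergence terms). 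Collecting terms, the pieces linear in the curvature of the auxiliary connection and the pieces purely quadratic in the $D$'s must reassemble. The terms involving $\div$ will produce $-(\div e_\alpha)(\div\gm e^\alpha)$ and $-2\rho(e_\alpha)\div\gm e^\alpha$ (the latter from the inhomogeneous $\rho(u)f$-type term in the connection–divergence relation), and the remaining purely algebraic terms in $c$ will organize, via the $\w3E^*$-antisymmetry and careful index bookkeeping with $\gm\indices{^\alpha_\delta}$ inserted to account for the $V_+$-minus-$V_-$ split, into $\tfrac14\gm\indices{^\alpha_\delta}c_{\alpha\beta\gamma}c^{\delta\beta\gamma}-\tfrac1{12}\gm\indices{^\alpha_\delta}\gm\indices{^\beta_\epsilon}\gm\indices{^\gamma_\zeta}c_{\alpha\beta\gamma}c^{\delta\epsilon\zeta}$. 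The coefficients $\tfrac14$ and $-\tfrac1{12}$ are the signature of getting the combinatorics of the antisymmetrizations right, and that is exactly where I would slow down and be careful.

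Once the formula is established, the final sentence "$\gr$ depends on the connection only via its divergence" is immediate: the right-hand side is expressed entirely in terms of $\gm$, the structure coefficients $c$ (which depend only on the Courant algebroid and the frame, not on $D$), and $\div=\div_D$. Since $\gr$ is itself frame-independent by its invariant definition $\gr=\on{Tr}(\gm\fgrc)$, and the RHS now has no $D$-dependence beyond $\div_D$, changing $D$ within $LC(\gm,\div)$ cannot change $\gr$; this also reproves part of the content of Proposition \ref{prop:grc_explicit} at the level of the scalar.

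The main obstacle I anticipate is purely organizational: the intermediate expansion of $\grm\indices{^{ab}_{ab}}-\grm\indices{^{\hat a\hat b}_{\hat a\hat b}}$ produces many terms quadratic in $D_{\alpha\beta\gamma}$, and one must repeatedly use (i) the total antisymmetry of $c$ and of the $\w3E^*$-part of any torsion, (ii) the last-two-index antisymmetry of $D$, and (iii) the constraint $A_{\alpha a\hat b}=0$ for the affine fiber, together with cyclic-sum identities, to cancel the connection-dependent (non-divergence) pieces and to collapse the rest into the two $c\cdot c$ invariants with the stated rational coefficients. A useful sanity check along the way is to specialize to the exact Courant algebroid with a flat splitting ($c=H$, the twisting 3-form viewed in a parallel frame) and to the dilaton-weighted divergence, where the formula should reduce to the known expression $\gr = R_g - \tfrac1{12}|H|^2 + 4\Delta_g\varphi - 4|\nabla\varphi|^2$ of the string effective action; matching this case fixes any ambiguity in the bookkeeping.
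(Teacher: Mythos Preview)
Your proposal is correct and follows essentially the same approach as the paper: fix an admissible frame, exploit the constraints on the Christoffel symbols coming from torsion-freeness, metric compatibility, and Proposition~\ref{prop:lcbracket} (namely $\Gamma_{[\alpha\beta\gamma]}=-\tfrac13 c_{\alpha\beta\gamma}$, $\Gamma_{\alpha a\hat a}=0$, $\Gamma_{\hat abc}=-c_{\hat abc}$, $\Gamma_{a\hat b\hat c}=-c_{a\hat b\hat c}$), then compute $\grm\indices{^{ab}_{ab}}$ directly and obtain $\grm\indices{^{\hat a\hat b}_{\hat a\hat b}}$ by the $V_+\leftrightarrow V_-$ swap. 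The one concrete identity you should isolate early, since it is exactly what collapses the connection-dependent quadratic terms and produces the $\tfrac16$ coefficient before recombination, is $\Gamma_{abc}\Gamma^{cba}+\tfrac12\Gamma_{cab}\Gamma^{cba}=\tfrac32\Gamma_{[abc]}\Gamma^{cba}=-\tfrac16 c_{abc}c^{abc}$.
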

    \begin{proof}
      Fix an admissible frame, and denote the Christoffel symbols \[\Gamma\indices{_\alpha^\gamma_\beta}:=(D_{e_\alpha}e_\beta)^\gamma,\] which satisfy $\Gamma\indices{_{\alpha\beta\gamma}}=-\Gamma\indices{_{\alpha\gamma\beta}}$.
      The vanishing of torsion, compatibility with $\gm$, and Proposition \ref{prop:lcbracket} give
      \[\Gamma_{[\alpha\beta\gamma]}=-\tfrac13c_{\alpha\beta\gamma},\qquad \Gamma_{\alpha a\hat a}=\Gamma_{\alpha \hat aa}=0,\qquad \Gamma_{\hat a bc}=-c_{\hat a  bc},\quad \Gamma_{a \hat b\hat c}=-c_{a\hat b\hat c},\]
      respectively. In particular
      \[\Gamma\indices{_a^a_b}=-\Gamma\indices{^a_{ba}}-\Gamma\indices{_{ba}^a}-c\indices{_a^a_b}=-\Gamma\indices{^a_{ba}}=\div e_b\]
      and \[\Gamma_{abc}\Gamma^{cba}+\tfrac12\Gamma_{cab}\Gamma^{cba}=\tfrac32\Gamma_{[abc]}\Gamma^{cba}=-\tfrac32\Gamma_{[abc]}\Gamma^{[abc]}=-\tfrac16c_{abc}c^{abc}.\]
      We then directly calculate
        \begin{align*}
            \grm\indices{^{ab}_{ab}}&=\tfrac12\eta^{\delta a}\delta^\beta_b((e_a)^\alpha[D_\alpha,D_\beta](e^b)_\delta+(e^b)^\alpha[D_\alpha,D_\delta](e_a)_\beta-(D_\alpha(e_a)_\beta)(D^\alpha(e^b)_\delta))\\
            &=[D_a,D_b](e^b)^a-\tfrac12\Gamma_{\alpha ab}\Gamma^{\alpha ba}=D_aD_b(e^b)^a-D_bD_a(e^b)^a-\tfrac12\Gamma_{c ab}\Gamma^{c ba}-\tfrac12\Gamma_{\hat c ab}\Gamma^{\hat c ba}\\
            &=(\div(D_{e_b}e^b)-(D_{D_{e_a}e_b}e^b)^a)-\rho(e_b)\div(e^b)-\tfrac12\Gamma_{c ab}\Gamma^{c ba}-\tfrac12\Gamma_{\hat c ab}\Gamma^{\hat c ba}\\
            &=\div(\Gamma\indices{_b_a^b}e^a)-\Gamma_{abc}\Gamma^{cba}-\rho(e_a)\div(e^a)-\tfrac12\Gamma_{c ab}\Gamma^{c ba}+\tfrac12\Gamma_{\hat c ab}\Gamma^{\hat c ab}\\
            &=-(\div e^a)(\div e_a)-2\rho(e_a)\div(e^a)+\tfrac16c_{abc}c^{abc}+\tfrac12c_{\hat cab}c^{\hat cab}.
        \end{align*}
        Now, $\grm\indices{^{\hat a\hat b}_{\hat a\hat b}}$ is obtained by simply exchanging $V_+$ and $V_-$, and so
        \[\grm\indices{^{\hat a\hat b}_{\hat a\hat b}}=-(\div e^{\hat a})(\div e_{\hat a})-2\rho(e_{\hat a})\div(e^{\hat a})+\tfrac16c_{\hat a\hat b\hat c}c^{\hat a\hat b\hat c}+\tfrac12c_{c\hat a\hat b}c^{c\hat a\hat b}.\]
      The statement of the Lemma then follows from
      \[\tfrac16c_{abc}c^{abc}+\tfrac12c_{\hat abc}c^{\hat abc}-\tfrac12c_{a\hat b\hat c}c^{a\hat b\hat c}-\tfrac16c_{\hat a\hat b\hat c}c^{\hat a\hat b\hat c}=\tfrac14\gm\indices{^\alpha_\delta}c_{\alpha\beta\gamma}c^{\delta\beta\gamma}-\tfrac1{12}\gm\indices{^\alpha_\delta}\gm\indices{^\beta_\epsilon}\gm\indices{^\gamma_\zeta}c_{\alpha\beta\gamma}c^{\delta\epsilon\zeta}.\qedhere\]
    \end{proof}
    \begin{nota}
        Similarly to the Ricci tensor, when we need to display the concrete dependencies, we shall write $
        \gr_{\gm,\div}$ or $\gr(\gm,\div)$, and in the case $
        \div=\div_\sigma$ we will use $\gr_{\gm,\sigma}$ or $\gr(\gm,\sigma)$.
    \end{nota}    
    \begin{prop} \label{p:scalardivchange}
        $\gr_{\gm,\div+\langle e,\slot\rangle}-\gr_{\gm,\div}=-|e|_{\gm}^2-2\div\gm \!e$.
    \end{prop}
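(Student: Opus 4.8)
The plan is to read the result off from the explicit formula for the scalar curvature established in Lemma~\ref{lem:scalexplicit}. I would fix an admissible frame $\{e_\alpha\}=\{e_a,e_{\hat a}\}$ adapted to $E=V_+\oplus V_-$; since the claimed identity is pointwise we may work locally, where such frames exist, and since a Levi-Civita connection realising any prescribed divergence exists, Lemma~\ref{lem:scalexplicit} applies both to $\div$ and to $\div':=\div+\langle e,\slot\rangle$. Of the four terms in that formula, only the first two,
\[-(\div e_\alpha)(\div\gm e^\alpha)-2\rho(e_\alpha)\,\div\gm e^\alpha,\]
involve the divergence; the two terms built from the structure coefficients $c_{\alpha\beta\gamma}$ and from $\gm\indices{^\alpha_\beta}$ are divergence-independent and so drop out of $\gr_{\gm,\div'}-\gr_{\gm,\div}$.

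Next I would expand these two terms using the Leibniz property $\div'u=\div u+\langle e,u\rangle$ of a divergence operator together with $\gm^T=\gm$, which give $\div'e_\alpha=\div e_\alpha+e_\alpha$ and $\div'(\gm e^\alpha)=\div(\gm e^\alpha)+(\gm e)^\alpha$ for the frame components $e_\alpha=\langle e,e_\alpha\rangle$, $(\gm e)^\alpha=\langle \gm e,e^\alpha\rangle$. Substituting, cancelling the term quadratic in $\div$, and using $e_\alpha(\gm e)^\alpha=\langle\gm e,e\rangle=|e|_\gm^2$ leaves
\[\gr_{\gm,\div'}-\gr_{\gm,\div}=-|e|_\gm^2-(\div e_\alpha)(\gm e)^\alpha-e_\alpha\,\div(\gm e^\alpha)-2\rho(e_\alpha)\big[(\gm e)^\alpha\big],\]
so that the $-|e|_\gm^2$ of the statement has appeared and it remains to see that the last three terms sum to $-2\,\div(\gm e)$.

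This last point is the only step requiring care, and it is where the choice of an \emph{admissible frame adapted} to the eigenbundle splitting is used: in such a frame $\langle e_\alpha,e_\beta\rangle$ and $\gm\indices{^\alpha_\beta}$ are constant, so splitting the sums over the $V_+$- and $V_-$-indices and using $\gm e^a=e^a$, $\gm e^{\hat a}=-e^{\hat a}$ together with the matching relations $(\gm e)^a=e^a$, $(\gm e)^{\hat a}=-e^{\hat a}$ yields the identity $e_\alpha\,\div(\gm e^\alpha)=(\gm e)^\alpha\,\div e_\alpha$. Substituting this and then applying the Leibniz rule once more in the form $\div(\gm e)=\div\big((\gm e)^\alpha e_\alpha\big)=(\gm e)^\alpha\div e_\alpha+\rho(e_\alpha)\big[(\gm e)^\alpha\big]$ collapses the last three terms to $-2\,\div(\gm e)$, which finishes the proof. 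I expect the only (mild) obstacle to be the bookkeeping of raised versus lowered frame indices in deriving $e_\alpha\,\div(\gm e^\alpha)=(\gm e)^\alpha\,\div e_\alpha$; there is no analytic content, and strict positivity is not needed — $|e|_\gm^2$ here denotes merely the fibrewise pairing $\langle\gm\slot,\slot\rangle$ of Remark~\ref{r:strictpositive}, which may be indefinite.
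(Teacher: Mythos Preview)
Your proposal is correct and is essentially the paper's own proof: both subtract the two divergence-dependent terms of Lemma~\ref{lem:scalexplicit}, use that in an adapted admissible frame $\gm$ acts by $\pm 1$ on frame elements (so $\gm$ has constant components) to swap $e_\alpha\,\div(\gm e^\alpha)=(\gm e)^\alpha\,\div e_\alpha$, and then apply the Leibniz rule for $\div$ to collect the remaining terms into $-2\,\div(\gm e)$. The paper simply compresses these steps into two lines, but the content and the key identities are identical.
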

    \begin{proof}
        Since in an admissible frame $\gm$ is constant, we have
        \begin{align*}
            \gr_{\gm,\div+\langle e,\slot\rangle}-\gr_{\gm,\div}=&\ -\langle e,e_\alpha\rangle\langle e,\gm \!e^\alpha\rangle-2\langle e,e_\alpha\rangle \div\gm \!e^\alpha-2\rho(\gm\!e_\alpha)\langle e,e^\alpha\rangle\\
            =&\ -\langle \gm\! e,e\rangle-2\div\gm\!e,
        \end{align*}
        as claimed.
    \end{proof}

    \begin{ex}[\cite{SeveraValach2}]
      Fix a generalized metric $\gm$ on an exact Courant algebroid, corresponding to a pair $(g,H)$, and $\varphi\in C^\infty(M)$. Let $\sigma_g$ be the metric half-density and take $\sigma:=e^{-\varphi}\sigma_g$. Then
      \begin{equation}\label{eq:curvexplicit}
          \gr_{\gm,\sigma}=R-\tfrac1{12}\brs{H}^2_g-4e^\varphi\Delta_ge^{-\varphi}.
      \end{equation}
    \end{ex}

    \begin{rmk}\label{rk:laplace}
        Let $\slashed D$ be the generating Dirac operator \cite{AXu, Severa} associated to the Courant algebroid. Recall that $\slashed D^2$ is simply multiplication by a certain function on $M$, which only depends on the Courant algebroid itself. Notably, in the exact case $\slashed D^2=0$.  This operator determines the scalar curvature of the ``maximal'' pseudometric $\mc G=\Id$ following \cite{SeveraValach2}.
        Let $\div\in\ms Div$, $\sigma\in\ms H^*$, and set $e:=\div-\div_{\sigma}$. Then
        \[\gr_{\Id,\div}=-8\slashed D^2-\langle e,e\rangle-2\div_{\sigma}e.\]
        Furthermore, following \cite{SeveraValach2}, any generalized pseudometric induces an associated second-order differential operator $\check\Delta_{\gm}$ on $\ms H$, determined uniquely by the condition
    \[\sigma^{-1}\check\Delta_{\gm}\sigma=-\tfrac12\gr_{\gm,\sigma},\qquad \forall \sigma\in\ms H^*.\]
    It is self-adjoint and relates to the generating Dirac operator by $\check\Delta_{\text{id}}=4\slashed D^2$. (Note that $\check\Delta$ is shifted by $-4\slashed D^2$ w.r.t.\ the operator from \cite{SeveraValach2}.) It relates to $\Delta$ by
    \[\sigma^{-1}\check\Delta_{\gm}(f\sigma)=2\Delta_{\gm,\sigma}f-\tfrac12f\gr_{\gm,\sigma}.\]
\end{rmk}

\section{Variational formulas} \label{s:variations}

In this section we compute variational formulas for the various curvature quantities introduced in \S \ref{s:curvature}.  
We first fix notation that we will use throughout the paper.  Let $\gm_s$, $\divg_s$, $\gs_s$, $\varphi_s$, $D_s$ denote families of generalized pseudometrics, divergence operators, half-densities, dilatons, and connections, and set
\begin{gather} \label{f:variations}
    \frac{\del}{\del s} \gm = \chi, \quad \frac{\del}{\del s} \divg = \varepsilon, \quad \dels \gs = \nu, \quad \dels \varphi = \psi, \quad \frac{\del}{\del s} D = A.
\end{gather}
The computations to follow conceal a certain subtlety: the linearization of classical curvature quantities are typically expressed as second-order differential operators using the Levi-Civita connection.  In light of the nonuniqueness of Levi-Civita connections, some care is required.  Our first step in Proposition \ref{prop:defconnection} is to show that for a one-parameter family of pairs $(\gm_s, \divg_s)$ and a choice of Levi-Civita connection for $(\gm_0, \divg_0)$, we obtain a canonical one-parameter family of Levi-Civita connections along the path $(\gm_s, \divg_s)$ which agrees with the initial choice.  Given this family, we proceed to compute variational formulas for the full curvature tensor, and the Ricci and scalar curvatures.

\subsection{Canonical Levi-Civita connection along families}

  To begin, recall that in Subsections \ref{subsec:cacon} and \ref{subsec:lc} we introduced the maps
  \[\tau\colon E\otimes \w2E\to\w3E,\qquad \kappa\colon E\otimes E\otimes E\to E,\]
  given by $-3$ times antisymmetrisation, and contraction on the first two factors, respectively, as well as the maps
  \[\tau'\colon \w3\!\!E\to E\otimes (\w2V_+\oplus\w2V_-),\qquad \kappa'\colon E\to E\otimes(\w2V_+\oplus\w2V_-),\]
  satisfying \[\tau\circ\tau'=\Id,\qquad \kappa\circ\kappa'=\Id,\qquad \kappa\circ\tau'=0,\qquad \tau\circ\kappa'=0.\]
    
    \begin{prop}\label{prop:defconnection}
       Fix $(\gm_s, \divg_s)$ a one-parameter family of generalized pseudometrics and divergences and $D_0\in LC(\gm_0,\div_0)$.  Then the solution $D_s$ of the ordinary differential equation
       \[\frac{\del}{\del s} D=\tfrac14(1-\tau'\circ \tau-\kappa'\circ \kappa)\left(D\left[\gm,\chi\right]\right)+\kappa'(\varepsilon)\]
       with initial data $D_0$ satisfies $D_s \in LC(\gm_s, \divg_s)$ for all $s$.
    \end{prop}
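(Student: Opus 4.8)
The plan is to check that the ODE preserves each of the three conditions defining $LC(\gm_s,\div_s)$: vanishing torsion $T_{D_s}=0$, metric compatibility $D_s\gm_s=0$, and the divergence normalization $\div_{D_s}=\div_s$. All three hold at $s=0$ by the hypothesis $D_0\in LC(\gm_0,\div_0)$, so it suffices to show that the three defect quantities $t(s):=T_{D_s}$, $m(s):=D_s\gm_s$ and $e(s):=\div_{D_s}-\div_s\in\Gamma(E)$ each satisfy a homogeneous linear ODE in $s$, after which uniqueness of solutions forces them to vanish identically. Write $A:=\partial_s D$ for the velocity of the solution and $B:=D[\gm,\chi]$, so that $A=\tfrac14(1-\tau'\tau-\kappa'\kappa)B+\kappa'(\varepsilon)$; since $\gm$ and $\chi$ are both $\langle\cdot,\cdot\rangle$-symmetric the commutator $[\gm,\chi]$ is antisymmetric, hence $B\in\Gamma(E^*\otimes\w2E^*)$ lies in the domain of $\tau$ and $\kappa$, and $A\in\Gamma(E^*\otimes\w2E)$ is a genuine difference tensor, so the equation is well posed on the affine space of Courant connections. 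Throughout I will use the relations $\tau\tau'=\kappa\kappa'=\Id$, $\tau\kappa'=\kappa\tau'=0$ from \S\ref{subsec:lc}.

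The torsion and divergence conditions turn out to be preserved with no hypothesis on $D_s$. Since $T_{D_s}-T_{D_{s_0}}=\tau(D_s-D_{s_0})$ and $\tau$ is linear, $t'(s)=\tau(A)$; and $\tau(1-\tau'\tau-\kappa'\kappa)=\tau-\tau-0=0$ together with $\tau\kappa'=0$ gives $t'(s)\equiv0$, so $t\equiv0$. Likewise $\div_{D+A'}=\div_D+\langle\kappa(A'),\cdot\rangle$ yields $e'(s)=\kappa(A)-\varepsilon$; from $\kappa(1-\tau'\tau-\kappa'\kappa)=\kappa-0-\kappa=0$ and $\kappa\kappa'=\Id$ we get $\kappa(A)=\varepsilon$, hence $e'(s)\equiv0$. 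Thus $D_s$ stays torsion-free and of divergence $\div_s$ for all $s$.

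The substantive step is compatibility. Differentiating $m(s)=D_s\gm_s$ gives $m'(s)_\alpha=[A_\alpha,\gm_s]+(D_s\chi)_\alpha$, where $A_\alpha:=A(e_\alpha,\cdot,\cdot)\in\Gamma(\End(E))$. The key observation is that the pieces $\tau'\tau B$, $\kappa'\kappa B$ and $\kappa'(\varepsilon)$ of $A$ all lie in $E\otimes(\w2V_+\oplus\w2V_-)$, so their contributions to $A_\alpha$ are block-diagonal with respect to $E=V_+\oplus V_-$ and hence commute with $\gm_s$; therefore $[A_\alpha,\gm_s]=\tfrac14[B_\alpha,\gm_s]$. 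Using $[\gm,\chi]=2\gm\chi$ (from $\gm\chi=-\chi\gm$) and Leibniz one gets $B_\alpha=2\,m(s)_\alpha\chi+2\,\gm\,(D_s\chi)_\alpha$, while applying $D_{e_\alpha}$ to the identity $\gm\chi+\chi\gm=0$ gives $\gm\,(D_s\chi)_\alpha+(D_s\chi)_\alpha\,\gm=-\big(m(s)_\alpha\chi+\chi\,m(s)_\alpha\big)$. Feeding the latter into the expansion $[\gm\,(D_s\chi)_\alpha,\gm]=\gm(D_s\chi)_\alpha\gm-(D_s\chi)_\alpha$ and collecting terms, all the $(D_s\chi)_\alpha$ contributions cancel and one is left with
\[
m'(s)_\alpha=\tfrac12\,[m(s)_\alpha\chi,\gm_s]-\tfrac12\,\gm_s\big(m(s)_\alpha\chi+\chi\,m(s)_\alpha\big),
\]
which is linear and homogeneous in $m(s)$ with smooth $s$-dependent coefficients. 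Since $m(0)=D_0\gm_0=0$, uniqueness for linear ODEs gives $m\equiv0$, i.e.\ $D_s\gm_s=0$ for all $s$. Combining the three steps yields $D_s\in LC(\gm_s,\div_s)$ for every $s$.

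The main obstacle is precisely this compatibility step. Two things make it go through: first, recognizing that only the bare $\tfrac14 B$ term of $A$ survives in the commutator with $\gm_s$, the $\tau'\tau$, $\kappa'\kappa$ and $\kappa'$ pieces dropping out for type reasons; and second, the short piece of endomorphism algebra, which must use the differentiated constraint $\gm\chi+\chi\gm=0$, needed to reorganize $m'(s)$ into manifestly homogeneous form. It is worth emphasizing that merely verifying $m'(s)=0$ whenever $m(s)=0$ would be insufficient — that alone would permit a defect growing like $s^2$ — which is exactly why the argument is arranged so as to exhibit a genuine homogeneous linear ODE for $m$.
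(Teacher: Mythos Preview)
Your proof is correct and follows the same three-step strategy as the paper: verify that torsion-freeness, prescribed divergence, and metric compatibility are each preserved by the ODE. For the first two your computations are identical to the paper's. For compatibility the paper is more direct: assuming $D\gm=0$ it computes in one stroke
\[
[A_u,\gm]=\tfrac14[D_u[\gm,\chi],\gm]=\tfrac14\,D_u[[\gm,\chi],\gm]=-D_u\chi
\]
via the algebraic identity $[[\gm,\chi],\gm]=-4\chi$, thereby showing $m'=0$ on the zero set of $m$, and then invokes (implicitly) standard ODE uniqueness, which is unproblematic since the right-hand side depends smoothly on $D$. Your explicit derivation of a homogeneous linear ODE for $m$ is a more careful packaging of exactly the same idea; your closing caveat is fair in principle but not a genuine concern here, as the dependence on $m$ is visibly polynomial. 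As a minor simplification: since $\gm^2=\Id$ forces $m_\alpha\gm=-\gm m_\alpha$, the product $m_\alpha\chi$ actually commutes with $\gm$, so your bracket term $\tfrac12[m_\alpha\chi,\gm]$ vanishes and the ODE reduces to $m'_\alpha=-\tfrac12\,\gm(m_\alpha\chi+\chi m_\alpha)$.
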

    \begin{proof}
        For $D_s$ to remain compatible with $\gm_s$ requires that for any $u\in \Gamma(E)$ one has
        \[0= \frac{\del}{\del s} (D_u \gm) = D_u\chi +[A_u,\gm].\]
        The vanishing of torsion and prescribed divergence impose
        \[\tau(A)=0,\qquad \kappa(A)= \varepsilon.\]
        To see that
        \[A=\tfrac14(1-\tau'\circ \tau-\kappa'\circ \kappa)(D[\gm,\chi])+\kappa'(\varepsilon)\]
        satisfies these three conditions, we use the fact that 
        $[\alpha_u, \gm] = 0$ for $\alpha \in \on{im}(\tau')$ or $\alpha \in \on{im}(\kappa')$
        to get
        \begin{align*}[A_u,\gm]&=\tfrac14[D_u[\gm,\chi],\gm]=\tfrac14D_u[[\gm,\chi],\gm]=-D_u\chi,\\
        \tau(A)&=\tfrac14(\tau-\tau\circ\tau'\circ \tau)(D[\gm,\chi])=0, \\
        \kappa(A)&=\tfrac14(\kappa-\kappa\circ\kappa'\circ \kappa)(D[\gm,\chi])+\kappa\circ\kappa'(\varepsilon)=\varepsilon.\qedhere
        \end{align*}
    \end{proof}
    
    \begin{lemma}\label{lem:eplicit} Given $(\gm_s, \divg_s)$ a one-parameter family of generalized pseudometrics and divergences and $D_0\in LC(\gm_0,\div_0)$, let $D_s$ denote the family obtained from Proposition \ref{prop:defconnection}.  Then
        \[(n_+-1)A_{abc}=\eta_{a[b}D^{\hat a}\chi\indices{_{c]\hat a}}+2\eta_{a[b}\varepsilon_{c]},\qquad (n_--1)A_{\hat a\hat b\hat c}=-\eta_{\hat a [\hat b}D^{a}\chi_{\hat c]a}+2\eta_{\hat a[\hat b}\varepsilon_{\hat c]},\]
        \[A_{ab\hat c}=\tfrac12 D_a\chi_{b\hat c},\quad A_{\hat a \hat b c}=-\tfrac12 D_{\hat a}\chi_{\hat bc},\quad A_{\hat a b\hat c}=\tfrac12 D_{\hat a}\chi_{b\hat c},\quad A_{a\hat bc}=-\tfrac12 D_a\chi_{\hat bc},\]
        \[ A_{\hat abc}=-D_{[b}\chi_{c]\hat a},\quad A_{a\hat b\hat c}=D_{[\hat b}\chi_{\hat c]a}.\]
    \end{lemma}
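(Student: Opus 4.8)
The plan is to take the explicit formula for the canonical connection variation from Proposition \ref{prop:defconnection}, namely
\[
A=\tfrac14(1-\tau'\circ \tau-\kappa'\circ \kappa)\left(D[\gm,\chi]\right)+\kappa'(\varepsilon),
\]
and simply compute each frame component by plugging in the explicit component formulas for $\tau'$, $\kappa'$, and the action of $\kappa$ and $\tau$ (all recorded in Subsections \ref{subsec:cacon} and \ref{subsec:lc}). First I would record the ingredients in an admissible adapted frame: since $\chi \in T_{\gm}\ms M$ satisfies $\chi_{ab}=\chi_{\hat a\hat b}=0$ and $\chi_{a\hat b}=\chi_{\hat b a}$ (Definition \ref{d:Mtangent}), the tensor $[\gm,\chi]$ has components $([\gm,\chi])\indices{^\alpha_\beta}=\gm\indices{^\alpha_\gamma}\chi\indices{^\gamma_\beta}-\chi\indices{^\alpha_\gamma}\gm\indices{^\gamma_\beta}$, which vanishes on $V_+\otimes V_+$ and $V_-\otimes V_-$ and equals $\pm 2\chi$ on the mixed blocks; concretely $[\gm,\chi]_{a\hat b}=-[\gm,\chi]_{\hat b a}=2\chi_{a\hat b}$ up to sign conventions I would pin down. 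Hence $D[\gm,\chi]$ is the tensor with components like $D_\alpha\chi_{b\hat c}$ (times $\pm 2$) in the mixed slots and zero in the pure slots.

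Next I would peel off the three terms. The term $\kappa'(\varepsilon)$ contributes only to the $A_{abc}$ and $A_{\hat a\hat b\hat c}$ components, via the formula $(\kappa'u)_{abc}=\tfrac{2}{n_+-1}\eta_{a[b}u_{c]}$ and $(\kappa'u)_{\hat a\hat b\hat c}=\tfrac{2}{n_--1}\eta_{\hat a[\hat b}u_{\hat c]}$ from the lemma preceding Definition \ref{d:LC}, which already explains the $2\eta_{a[b}\varepsilon_{c]}$ and $2\eta_{\hat a[\hat b}\varepsilon_{\hat c]}$ terms in the claimed formulas. For the remaining $\tfrac14(1-\tau'\tau-\kappa'\kappa)(D[\gm,\chi])$ piece, I would work block by block: on the ``mixed'' components $A_{ab\hat c}$, $A_{\hat a\hat b c}$, $A_{\hat a b\hat c}$, $A_{a\hat b c}$ the operators $\tau'\tau$ and $\kappa'\kappa$ act trivially (their images land in the pure-pure-mixed pattern appropriate to $\on{im}\tau'$ and $\on{im}\kappa'$, which is disjoint from these blocks once one uses $\kappa\circ\tau'=0$ and the antisymmetry), so $A = \tfrac14 D[\gm,\chi]$ there, giving $A_{ab\hat c}=\tfrac12 D_a\chi_{b\hat c}$ etc., matching the claim after inserting the factor of $2$ from $[\gm,\chi]$. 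For the components $A_{\hat a b c}$ and $A_{a\hat b\hat c}$, which live in $V_-\otimes\w2 V_+$ and $V_+\otimes\w2 V_-$ respectively, the antisymmetrisation map $\tau$ (i.e.\ $-3$ times $[\cdot]$) acts nontrivially — here $\tau'\tau$ is the projection that antisymmetrises over all three indices — and one checks $(1-\tau'\tau)(D[\gm,\chi])$ on these slots produces $-D_{[b}\chi_{c]\hat a}$ (times the appropriate $2/1$ factors), which is the claimed $A_{\hat a bc}=-D_{[b}\chi_{c]\hat a}$; similarly for $A_{a\hat b\hat c}$. Finally, for $A_{abc}$ (and $A_{\hat a\hat b\hat c}$) the projection $1-\tau'\tau-\kappa'\kappa$ is the trace-free totally-``mixed-symmetric'' part; one computes $\kappa'\kappa(D[\gm,\chi])$ explicitly on these slots, which is where the $\eta_{a[b}D^{\hat a}\chi_{c]\hat a}$ term and the factor $(n_+-1)^{-1}$ come from, and collecting terms gives the first displayed equation.

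The main obstacle, as usual with such component computations in generalized geometry, is bookkeeping: keeping the hatted/unhatted index conventions straight, tracking the factor of $2$ between $[\gm,\chi]$ and $\chi$ in each mixed block, and correctly evaluating $\kappa'\kappa$ and $\tau'\tau$ on each of the six index-type blocks (using $\kappa\circ\tau'=0$, $\tau\circ\kappa'=0$, $\tau\tau'=\Id$, $\kappa\kappa'=\Id$ to kill cross terms). There is also a minor subtlety in verifying that on the ``easy'' mixed blocks the correction operators genuinely annihilate $D[\gm,\chi]$ rather than merely the full tensor $A$, but this follows from the image descriptions of $\tau'$ and $\kappa'$ (both land in $E\otimes(\w2 V_+\oplus\w2 V_-)$ with the specific trace and antisymmetry constraints of $LC(\gm,\div)$, cf.\ Definition \ref{d:LC}). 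No genuinely new idea is needed beyond Proposition \ref{prop:defconnection} and the explicit formulas already assembled; it is a direct though somewhat lengthy verification, which is why only the result is displayed.
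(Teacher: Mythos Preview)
Your proposal is correct and follows essentially the same route as the paper's proof: both start from the formula $A=\tfrac14(1-\tau'\tau-\kappa'\kappa)(D[\gm,\chi])+\kappa'(\varepsilon)$ of Proposition \ref{prop:defconnection}, record that $(D[\gm,\chi])_{\alpha\beta\gamma}=2\gm_{\beta\delta}D_\alpha\chi\indices{^\delta_\gamma}$ has vanishing $\alpha bc$ and $\alpha\hat b\hat c$ components, and then read off each block using that the images of $\tau'$ and $\kappa'$ lie in $E\otimes(\w2V_+\oplus\w2V_-)$ (so their outputs vanish on the genuinely mixed blocks), that $\kappa'$ also vanishes on the $\hat abc$ and $a\hat b\hat c$ blocks, and that $\tau'\tau$ vanishes on the $abc$ and $\hat a\hat b\hat c$ blocks because $(D[\gm,\chi])_{[abc]}=0$. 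The paper simply presents the three representative computations $A_{ab\hat c}$, $A_{\hat abc}$, $A_{abc}$ and declares the rest analogous; your write-up is the same argument spelled out more discursively.
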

    \begin{proof}
        First, we have $(D[\gm,\chi])_{\alpha\beta\gamma}=2\gm_{\beta\delta}D_\alpha\chi\indices{^\delta_\gamma}$, which has vanishing $\alpha ab$ and $\alpha \hat a\hat b$ components. Then we calculate
        \begin{align*}
            A_{ab\hat c}&=\tfrac14(D[\gm,\chi])_{ab\hat c}=\tfrac12D_a\chi_{b\hat c},\\
            A_{\hat abc}&=-\tfrac14(\tau'\tau D[\gm,\chi])_{\hat abc}=\tfrac14(-3)\tfrac13(D[\gm,\chi]_{c\hat ab}-D[\gm,\chi]_{b\hat ac})=\tfrac12(D_b\chi_{c\hat a}-D_c\chi_{b\hat a}),\\
            A_{abc}&=-\tfrac14(\kappa'\kappa D[\gm,\chi])_{abc}+(\kappa'\varepsilon)_{abc}=\tfrac1{n_+-1}(\eta_{a[b}D^{\hat a}\chi\indices{_{c]\hat a}}+2\eta_{a[b}\varepsilon_{c]}).
        \end{align*}
        The remaining components follow analogously.
    \end{proof}

\subsection{Variation of curvature tensors}
    The following variation of the Riemann tensor extends the exact-case result of \cite{Hohm:2011si}, derived in the double-field theoretic context.
  \begin{prop}
    Let $D_s$ be a family of torsion-free connections. Then
    \[\dels \grm_{\delta\gamma\alpha\beta}=D_{[\alpha}A_{\beta]\delta\gamma}+D_{[\delta}A_{\gamma]\alpha\beta}.\]
  \end{prop}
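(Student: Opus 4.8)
The plan is to linearize the defining formula
\[
\grm_D(w,z,x,y)=\tfrac12w^\delta y^\beta\bigl(x^\alpha[D_\alpha,D_\beta]z_\delta+z^\alpha[D_\alpha,D_\delta]x_\beta-(D_\alpha x_\beta)(D^\alpha z_\delta)\bigr)
\]
directly, working at a fixed point and differentiating in $s$. Since we only need the value of the tensor at each $s$, it is harmless to choose, at the point in question, a frame that is $D_s$-parallel at that point for the fixed value of $s$ at which we differentiate; equivalently, one argues componentwise using the derivation property of $[D_\alpha,D_\beta]$ and $\partial/\partial s$ noted in the Remark after \eqref{eq:compare_connection_notations}. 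The key input is that $\dels$ commutes past index contractions and is a derivation, and that $\dels D = A$ means $\dels(D_\alpha t)^{\ldots} = (D_\alpha(\dels t))^{\ldots} + (A_\alpha t)^{\ldots}$ in the appropriate schematic sense, where $A_\alpha$ acts on each tensor slot.

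First I would record the variation of a single commutator acting on a section. Writing $[D_\alpha,D_\beta] = D_\alpha D_\beta - D_\beta D_\alpha - D_{D_{e_\alpha}e_\beta} + D_{D_{e_\beta}e_\alpha}$ (cf.\ the proof of Lemma \ref{lem:torsion_on_functions}), one linearizes term by term; the zeroth-order (in $A$) contributions reassemble into $[D_\alpha,D_\beta]$ acting on the varied section and cancel in the end, while the first-order contributions collect into covariant derivatives of $A$. Concretely I expect
\[
\dels\bigl(x^\alpha[D_\alpha,D_\beta]z_\delta\bigr) = x^\alpha[D_\alpha,D_\beta](\dels z)_\delta + x^\alpha\bigl(D_\alpha(A_\beta z)_\delta - D_\beta(A_\alpha z)_\delta\bigr) - x^\alpha(A_{D_{e_\alpha}e_\beta}z)_\delta + x^\alpha(A_{D_{e_\beta}e_\alpha}z)_\delta + \text{(torsion-free simplifications)},
\]
and one uses torsion-freeness (in the form of Lemma \ref{lem:torsion_on_functions} and $T_{D_s}=0$ for all $s$) to control the connection-on-connection terms $D A$; here $(A_\beta z)_\delta = A_{\beta\gamma\delta}z^\gamma$ in our index conventions \eqref{f:connectionconvention}.

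Next I would assemble the three pieces of $\grm_{\delta\gamma\alpha\beta}$ (taking $w=e_\delta$, $z=e_\gamma$, $x=e_\alpha$, $y=e_\beta$ so that $\dels z=\dels w=\dels x=\dels y=0$, and hence every term of the form $[D,D](\dels\cdot)$ drops out). The first two commutator terms contribute expressions of the shape $D_\alpha A_{\beta\gamma\delta} - D_\beta A_{\alpha\gamma\delta}$ and $D_\delta A_{\gamma\alpha\beta} - D_\gamma A_{\delta\alpha\beta}$ respectively — i.e.\ $2D_{[\alpha}A_{\beta]\delta\gamma} + 2D_{[\delta}A_{\gamma]\alpha\beta}$ up to the overall factor $\tfrac12$ — while the third term $\dels\bigl(-(D_\alpha x_\beta)(D^\alpha z_\delta)\bigr)$ produces $-(A_{\alpha\cdot\beta})(D^\alpha z_\delta) - (D_\alpha x_\beta)(A^{\alpha}{}_{\cdot\delta})$-type terms that must cancel against the leftover non-antisymmetrized pieces from the commutator variations (the terms where $A$ sits on the "wrong" index or where a $D_{D_{e}e}$ reorganization leaves a residue). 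The symmetry $\grm_{\delta\gamma\alpha\beta}=\grm_{\alpha\beta\delta\gamma}$ of the original tensor is a useful bookkeeping check that the final answer $D_{[\alpha}A_{\beta]\delta\gamma}+D_{[\delta}A_{\gamma]\alpha\beta}$ is manifestly invariant under $(\delta\gamma)\leftrightarrow(\alpha\beta)$.

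The main obstacle I anticipate is precisely this cancellation of the non-antisymmetric residue: several terms involving $A$ contracted with Christoffel-type data $D_{e}e$, and the two "extra" summands from the last term of $\grm$, all have to conspire to vanish, and getting the index placements and signs right in our conventions (where indices are raised/lowered by the Courant pairing $\langle\cdot,\cdot\rangle$, not by $\gm$) is the delicate part. A clean way to organize this is to note that the whole computation is $C^\infty(M)$-multilinear in the four slots (by the Propositions establishing tensoriality of $\grm$, applied at each $s$), so one may freely pass to a local admissible frame or a normal-type frame for $D_s$ at the point, killing $D_{e_\alpha}e_\beta$ at that point and reducing everything to the three clean terms $D_\alpha D_\beta$, $D_\delta D_\gamma$ (with their swaps) plus the last term; the residue then collapses immediately. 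I would present the proof in this reduced form, remarking that the reduction is justified by tensoriality.
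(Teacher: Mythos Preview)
Your overall strategy is the same as the paper's: linearize the defining expression for $\grm_D$ directly in $s$. What is missing is the actual mechanism of the cancellation you flag as ``the main obstacle''. The paper does not use any normal-frame trick; it computes
\[
\tdels(D_\alpha D_\beta u^\gamma)=u^\delta D_\alpha A\indices{_\beta^\gamma_\delta}+A\indices{_\alpha_\beta^\delta}D_\delta u^\gamma+2A\indices{_{(\alpha|}^\gamma_\delta}D_{|\beta)}u^\delta
\]
and then substitutes into the symmetric rewriting
\[
\grm(e_\delta,z,x,e_\beta)=x^\alpha D_{[\alpha}D_{\beta]}z_\delta-\tfrac14(D_\gamma x_\beta)(D^\gamma z_\delta)+(\beta\leftrightarrow\delta\ \&\ z\leftrightarrow x).
\]
After the obvious reshuffling, the entire residue organizes into
\[
x^\alpha(D^\gamma z_\delta)\bigl(A_{[\alpha\beta]\gamma}-\tfrac12 A_{\gamma\beta\alpha}\bigr)=\tfrac32\,x^\alpha(D^\gamma z_\delta)\,A_{[\alpha\beta\gamma]},
\]
plus the $(\beta\leftrightarrow\delta,\ z\leftrightarrow x)$ partner. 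This vanishes precisely because $D_s$ is torsion-free for all $s$, so $\tau(A)=0$, i.e.\ $A_{[\alpha\beta\gamma]}=0$. You gesture at torsion-freeness but never isolate this consequence, and your phrasing (``to control the connection-on-connection terms $DA$'') points at the wrong place.

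Two further remarks. First, in the paper's conventions $D_\alpha D_\beta$ already denotes the tensorial second covariant derivative (see \eqref{eq:compare_connection_notations}), so $[D_\alpha,D_\beta]=D_\alpha D_\beta-D_\beta D_\alpha$; your expansion with extra $D_{D_{e_\alpha}e_\beta}$ terms double-counts. Second, the normal-frame shortcut does not help: even if you arranged $D_{e_\alpha}e_\beta=0$ at a point for the fixed $s$, differentiating in $s$ produces $A_{e_\alpha}e_\beta$, and all the residue terms involve $A$ rather than Christoffels of $D$, so nothing ``collapses immediately''. (Moreover, the existence of such a frame for a Courant algebroid connection is not established in the paper and is not obvious when $\rho$ has a kernel.) The clean route is the one above: compute $\tdels(D_\alpha D_\beta u^\gamma)$ once, substitute, and invoke $A_{[\alpha\beta\gamma]}=0$.
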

  \begin{proof}
    We start by calculating
    \begin{align*}
      \dels (D_\alpha D_\beta u^\gamma)=&\ D_\alpha (\tdels D)_\beta u^\gamma+ (\tdels D)_\alpha D_\beta u^\gamma\\
=&\ D_\alpha(A\indices{_\beta^\gamma_\delta}u^\delta)+A\indices{_\alpha_\beta^\delta}D_\delta u^\gamma+A\indices{_\alpha^\gamma_\delta}D_\beta u^\delta\\
      =&\ u^\delta D_\alpha A\indices{_\beta^\gamma_\delta}+A\indices{_\alpha_\beta^\delta}D_\delta u^\gamma+2A\indices{_{(\alpha|}^\gamma_\delta}D_{|\beta)} u^\delta.
    \end{align*}
    Now we can write the generalized Riemann tensor as
    \[\grm(e_\delta,z,x,e_\beta)=x^\alpha D_{[\alpha}D_{\beta]}z_\delta-\tfrac14(D_\gamma x_\beta)(D^\gamma z_\delta)+(\beta\leftrightarrow \delta \; \& \; z \leftrightarrow x),\]
    and so
    \begin{align*}
      \dels{\grm}(e_\delta,z,x,e_\beta)&=x^\alpha \tdels (D_{[\alpha}D_{\beta]})z_\delta-\tfrac12((\tdels D)_\gamma x_\beta)(D^\gamma z_\delta)+(\beta\leftrightarrow \delta \; \& \; z \leftrightarrow x)\\
      &=x^\alpha z^\gamma D_{[\alpha} A\indices{_{\beta]}_\delta_\gamma}+x^\alpha A\indices{_{[\alpha}_{\beta]}^\gamma}D_\gamma z_\delta-\tfrac12 A\indices{_\gamma_\beta^\alpha}x_\alpha D^\gamma z_\delta+(\beta\leftrightarrow \delta \; \& \; z \leftrightarrow x)\\
      &=x^\alpha z^\gamma D_{[\alpha} A\indices{_{\beta]}_\delta_\gamma}+x^\alpha (D^\gamma z_\delta)(A_{[\alpha\beta]\gamma}-\tfrac12 A_{\gamma\beta\alpha})+(\beta\leftrightarrow \delta \; \& \; z \leftrightarrow x)\\
      &=x^\alpha z^\gamma D_{[\alpha} A\indices{_{\beta]}_\delta_\gamma}+\tfrac32 x^\alpha (D^\gamma z_\delta)A_{[\alpha\beta\gamma]}+(\beta\leftrightarrow \delta \; \& \; z \leftrightarrow x)\\
      &=x^\alpha z^\gamma D_{[\alpha} A\indices{_{\beta]}_\delta_\gamma}+(\beta\leftrightarrow \delta \; \& \; z \leftrightarrow x),
    \end{align*}
    as claimed.
  \end{proof}

  \begin{prop}\label{prop:variation}
Given $(\gm_s, \divg_s)$ a one-parameter family of generalized pseudometrics and divergences, fix $D_s\in LC(\gm_s,\div_s)$.  Then
  \begin{align}\label{eq:grc_var}
    \dels\grc&=-\tfrac12\Delta \chi - \tfrac12 \mc L_{\on{Tr} D \chi+\gm \varepsilon}\gm+\mc A(\chi)+\chi\gm\grc+\gm[\chi,\fgrc],\\
    \dels\gr&=\gm\indices{^\alpha_\gamma} \gm\indices{^\beta_\delta} D_\alpha D_\beta \chi^{\gamma\delta}-2\gm^{\alpha\beta}D_\alpha \varepsilon_\beta+\tfrac12\chi^{\alpha\beta}\grc_{\alpha\beta}.
  \end{align}
    where $(\on{Tr} D \chi)^\beta:=D_\alpha \chi^{\alpha\beta}$ and $\mc A(\chi)$ is the following first order differential operator in $\chi$:
    \[\mc A(\chi)_{\alpha\beta}=\tfrac12(\gm\indices{^\gamma_\beta}[D_\delta,D_\gamma]\chi\indices{_\alpha^\delta}-\gm\indices{^\gamma_\beta}[D_\delta,D_\alpha]\chi\indices{_\gamma^\delta})+(\alpha\leftrightarrow\beta).\]
  \end{prop}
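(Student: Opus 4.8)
The plan is to deduce both identities from the variation formula for $\grm$ just proved, from the algebraic definitions $\grc=\fgrc-\gm\fgrc\gm$ and $\gr=\on{Tr}(\gm\fgrc)$, and from the explicit description in Lemma \ref{lem:eplicit} of the canonical connection variation $A$ attached to $(\gm_s,\divg_s)$ by Proposition \ref{prop:defconnection}. Since $A$ is linear in $(\chi,\varepsilon)$ and the $\grm$-variation is linear in $A$, both $\dels\grc$ and $\dels\gr$ are linear in $(\chi,\varepsilon)$, so I would isolate the $\varepsilon$-contribution and the $\chi$-contribution separately. For the $\varepsilon$-contribution (take $\chi=0$) one uses that $\grc$ and $\gr$ depend only on $(\gm,\div)$: differentiating the identities of Propositions \ref{p:Riccidivergencechange} and \ref{p:scalardivchange} along $\div_s=\div_0+s\langle\varepsilon,\slot\rangle$ at $s=0$, with the quadratic term $|e|_{\gm}^2$ dropping out, produces precisely the summands $-\tfrac12\mc L_{\gm\varepsilon}\gm$ and $-2\gm^{\alpha\beta}D_\alpha\varepsilon_\beta$, which explains why the divergence variation enters the stated formulas only through $\gm\varepsilon$.

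For the $\chi$-contribution take $\varepsilon=0$, so $A=\tfrac14(1-\tau'\circ\tau-\kappa'\circ\kappa)(D[\gm,\chi])$ and in particular $\kappa(A)=0$. First I would contract the Riemann variation: since the Courant pairing is $D$-parallel and independent of $s$, contracting $\dels\grm_{\delta\gamma\alpha\beta}=D_{[\alpha}A_{\beta]\delta\gamma}+D_{[\delta}A_{\gamma]\alpha\beta}$ on the first and third slots and using $A\indices{^\alpha_{\alpha\gamma}}=\kappa(A)_\gamma=0$ gives $\dels\fgrc_{\gamma\beta}=\tfrac12 D^\alpha(A_{\beta\alpha\gamma}+A_{\gamma\alpha\beta})$. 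Differentiating $\grc=\fgrc-\gm\fgrc\gm$ then yields $\dels\grc=(\dels\fgrc-\gm(\dels\fgrc)\gm)-\chi\fgrc\gm-\gm\fgrc\chi$; using $\gm^2=\Id$ and $\gm\chi+\chi\gm=0$ one verifies the purely algebraic identity $-\chi\fgrc\gm-\gm\fgrc\chi=\chi\gm\grc+\gm[\chi,\fgrc]$, which produces the last two terms of \eqref{eq:grc_var}. As $X\mapsto X-\gm X\gm$ is twice the projection onto the $V_+\otimes V_-\oplus V_-\otimes V_+$ part, what remains is to show that $2(\dels\fgrc)_{a\hat b}=D^\alpha(A_{\hat b\alpha a}+A_{a\alpha\hat b})$ equals the $(a\hat b)$-component of $-\tfrac12\Delta\chi-\tfrac12\mc L_{\on{Tr}D\chi}\gm+\mc A(\chi)$.

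This last identification is the step I expect to be the main obstacle. Here I would substitute the component formulas for $A$ from Lemma \ref{lem:eplicit}, split the contracted index $\alpha$ into its $V_+$ and $V_-$ ranges, and expand. Using $\Delta=D^aD_a-D^{\hat a}D_{\hat a}$ (cf.\ the proof of Proposition \ref{p:Laplacianelliptic}), the part where the two derivatives act directly on $\chi$ assembles into $-\tfrac12(\Delta\chi)_{a\hat b}$; the leftover second-order terms are $D^{b'}D_a\chi_{b'\hat b}-D^{\hat b'}D_{\hat b}\chi_{\hat b'a}$, and commuting the two derivatives in each splits off the divergence terms $D_a(D^{b'}\chi_{b'\hat b})-D_{\hat b}(D^{\hat b'}\chi_{\hat b'a})$, which by $(\mc L_u\gm)_{a\hat a}=-4D_{[a}u_{\hat a]}$ (Proposition \ref{prop:killing}) with $u=\on{Tr}D\chi$ equal $-\tfrac12(\mc L_{\on{Tr}D\chi}\gm)_{a\hat b}$, together with the curvature commutators $[D_\delta,D_a]\chi\indices{_{\hat b}^\delta}-[D_\delta,D_{\hat b}]\chi\indices{_a^\delta}$, which one identifies with $\mc A(\chi)_{a\hat b}$ --- and these are genuinely first order in $\chi$, since $[D_\alpha,D_\beta]$ differs from a zeroth-order operator only by the first-order anchor term of Lemma \ref{lem:torsion_on_functions}.

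The scalar identity follows along the same lines, more cheaply: $\dels\gr=\on{Tr}(\chi\fgrc)+\gm^{\gamma\beta}(\dels\fgrc)_{\gamma\beta}$, where $\on{Tr}(\chi\fgrc)=\tfrac12\on{Tr}(\chi\grc)=\tfrac12\chi^{\alpha\beta}\grc_{\alpha\beta}$ because $\chi$ is off-diagonal and $\grc$ is twice the off-diagonal part of $\fgrc$; and substituting $A$ into $\gm^{\gamma\beta}(\dels\fgrc)_{\gamma\beta}=\gm^{\gamma\beta}D^\alpha A_{\beta\alpha\gamma}$, the totally antisymmetric correction $\tau'\tau(D[\gm,\chi])$ drops out against the symmetric contraction with $\gm^{\gamma\beta}$ (a totally antisymmetric $3$-tensor has vanishing trace on any pair of slots), while the trace correction $\kappa'\kappa(D[\gm,\chi])$ combines with the leading term to leave exactly $\gm\indices{^\alpha_\gamma}\gm\indices{^\beta_\delta}D_\alpha D_\beta\chi^{\gamma\delta}$, with no first-order-in-$\chi$ remainder; restoring $\varepsilon$ reinstates the term $-2\gm^{\alpha\beta}D_\alpha\varepsilon_\beta$ found above.
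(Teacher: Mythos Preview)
Your plan is essentially the paper's own proof: both use the canonical family $D_s$ of Proposition \ref{prop:defconnection}, contract the Riemann variation $\dels\grm_{\delta\gamma\alpha\beta}=D_{[\alpha}A_{\beta]\delta\gamma}+D_{[\delta}A_{\gamma]\alpha\beta}$ to get $\dels\fgrc$, differentiate $\grc=\gm[\gm,\fgrc]$, and then identify the pieces $-\tfrac12\Delta\chi$, $-\tfrac12\mc L_{\on{Tr}D\chi}\gm$, $\mc A(\chi)$ exactly as you describe. Your decision to isolate the $\varepsilon$-contribution via Propositions \ref{p:Riccidivergencechange} and \ref{p:scalardivchange} is a clean shortcut the paper does not take (it carries $\varepsilon$ through the component computation instead), but the $\chi$-part is step-for-step the same.

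One point to tighten: in the scalar argument you call $\tau'\tau(D[\gm,\chi])$ ``totally antisymmetric'' and invoke that a totally antisymmetric $3$-tensor has no trace. That tensor is \emph{not} totally antisymmetric --- $\tau'$ lands in $E\otimes(\w2 V_+\oplus\w2 V_-)$, not in $\w3 E$. The vanishing of $\gm^{\gamma\beta}(\tau'\tau(\cdots))_{\beta\alpha\gamma}$ is still true, but for the subtler reason that the only surviving components after the $\gm^{\gamma\beta}$ contraction are the all-$V_+$ and all-$V_-$ ones, and \emph{those} are totally antisymmetric. The paper sidesteps this by working directly with the component traces \eqref{eq:variation_trace}; if you keep your organization you should replace the parenthetical with this two-line check.
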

  
  \begin{proof} Note that as both Ricci and scalar only depend on $\gm$ and $\divg$, we can choose any family of Levi-Civita connections to compute the variation.  Thus for the data $(\gm_s, \divg_s)$ and given $D_0$, let $D_s$ denote the family obtained from Proposition \ref{prop:defconnection}.
  We use the fact that $\chi_{ac}=\chi_{\hat a\hat c}=0$, together with Lemma \ref{lem:eplicit}, and its consequence
    \begin{align}\label{eq:variation_trace}
        A\indices{_a^{ca}}&=-\tfrac12D_{\hat a}\chi^{c\hat a}-\varepsilon^c,\quad A\indices{_{\hat a}^{\hat c\hat a}}=\tfrac12D_a\chi^{\hat ca}-\varepsilon^{\hat c},\quad A\indices{_a^{\hat aa}}=-\tfrac12D_a\chi^{\hat a a},\quad A\indices{_{\hat a}^{a\hat a}}=\tfrac12D_{\hat a}\chi^{a\hat a},
    \end{align}
    to first calculate:
    \begin{align*}
      \tdels\fgrc_{a\hat a}&=\tdels\grm\indices{^\beta_{a\beta \hat a}}=D_{[\beta}A\indices{_{\hat a]}^\beta_a}+D_{[\beta}A\indices{_{a]}^\beta_{\hat a}}=D_{\beta}A\indices{_{(a}^\beta_{\hat a)}}-D_{(a|}A\indices{_{\beta}^\beta_{|\hat a)}}\\
      &=\tfrac12(D_cA\indices{_{a}^c_{\hat a}}+D_{\hat c}A\indices{_{a}^{\hat c}_{\hat a}}+D_cA\indices{_{\hat a}^c_a}+D_{\hat c}A\indices{_{\hat a}^{\hat c}_a})-D_{(a}\varepsilon_{\hat a)}\\
      &=\tfrac12(\tfrac12D_cD_a\chi\indices{^c_{\hat a}}+D^{\hat c}D_{[\hat c}\chi_{\hat a]a}-D^cD_{[c}\chi_{a]\hat a}-\tfrac12D_{\hat c}D_{\hat a}\chi\indices{^{\hat c}_a})-D_{(a}\varepsilon_{\hat a)}\\
      &=\tfrac12(-\tfrac12 D^cD_c\chi_{a\hat a}+\tfrac12 D^{\hat c}D_{\hat c}\chi_{a\hat a}+D_cD_a\chi\indices{^c_{\hat a}}-D_{\hat c}D_{\hat a}\chi\indices{_a^{\hat c}})-D_{(a}\varepsilon_{\hat a)}.
    \end{align*}
    Furthermore observe 
    \begin{align*}
      \tdels\grc&=\tdels (\gm[\gm,\fgrc])= \chi\gm\grc+\gm\chi\fgrc-\gm\fgrc\chi+\gm[\gm,\tdels{\fgrc}]
    \end{align*}
    In components this reads
    \begin{align*}
      \tdels\grc_{a\hat a}&=\chi_{a\hat c}\fgrc\indices{^{\hat c}_{\hat a}}-\fgrc_{ac}\chi\indices{^c_{\hat a}}+2\tdels{\fgrc}_{a\hat a}\\
      \tdels\grc_{ac}&=-\chi_{a\hat a}\grc\indices{^{\hat a}_c}+\chi_{a\hat a}\fgrc\indices{^{\hat a}_c}-\fgrc_{a\hat a}\chi\indices{^{\hat a}_c}=-\chi_{\hat a(a}\grc\indices{_{c)}^{\hat a}}\\
      \tdels\grc_{\hat a \hat c}&=\chi_{\hat aa}\grc\indices{^a_{\hat c}}-\chi_{\hat aa}\fgrc\indices{^a_{\hat c}}+\fgrc_{\hat aa}\chi\indices{^a_{\hat c}}=\chi_{a(\hat a}\grc\indices{_{\hat c)}^a}.
    \end{align*}
    This implies
    \begin{align*}
    \dels \grc_{a\hat a} &=-\tfrac12\gm^{\alpha\beta}D_\alpha D_\beta\chi_{a\hat a}+D_cD_a\chi\indices{^c_{\hat a}}-D_{\hat c}D_{\hat a}\chi\indices{_a^{\hat c}}-2D_{(a}\varepsilon_{\hat a)}+\chi_{a\hat c}\fgrc\indices{^{\hat c}_{\hat a}}-\fgrc_{ac}\chi\indices{^c_{\hat a}},\\
    \dels\grc_{ac}&=-\chi_{\hat a(a}\grc\indices{_{c)}^{\hat a}},\qquad
    \dels\grc_{\hat a \hat c}=\chi_{a(\hat a}\grc\indices{_{\hat c)}^a},
    \end{align*}
    where the hatted/unhatted indices refer to $\gm_s$.  To further simplify we note from the definition of $\mc A(\chi)$ that
    \[\mc A(\chi)_{a\hat a}=[D_c,D_a]\chi\indices{_{\hat a}^c}-[D_{\hat c},D_{\hat a}]\chi\indices{_a^{\hat c}},\qquad \mc A(\chi)_{ac}=\mc A(\chi)_{\hat a\hat c}=0.\]
    Using Proposition \ref{prop:killing}, we then have
    \begin{align*}
        D_cD_a\chi\indices{_{\hat a}^c}-D_{\hat c}D_{\hat a}\chi\indices{_a^{\hat c}}&=D_aD_c\chi\indices{_{\hat a}^c}-D_{\hat a}D_{\hat c}\chi\indices{_a^{\hat c}}+\mc A(\chi)_{a\hat a}\\
        &= 2D_{[a|}D_{\alpha}\chi\indices{_{|\hat a]}^\alpha}+\mc A(\chi)_{a\hat a}=-\tfrac12(\mc L_{\on{Tr} D \chi}\gm)_{a\hat a}+\mc A(\chi)_{a\hat a}\\
        -2D_{(a}\varepsilon_{\hat a)}&=2D_{[a}(\gm \varepsilon)_{\hat a]}=-\tfrac12(\mc L_{\gm\!\varepsilon}\gm)_{a\hat a}.
    \end{align*}
    Finally, the rest follows from
    \begin{align*}
        (\chi\gm\grc)_{ac}=-\chi_{a\hat a}\grc\indices{^{\hat a}_c},\qquad (\chi\gm\grc)_{\hat a \hat c}=\chi_{\hat aa}\grc\indices{^a_{\hat c}},\qquad (\chi\gm\grc)_{a\hat a}=(\chi\gm\grc)_{\hat aa}=0,
    \end{align*}
    and
    \begin{align*}
        (\gm[\chi,\fgrc])_{a\hat a}=(\gm[\chi,\fgrc])_{\hat aa}&=\chi_{a\hat c}\fgrc\indices{^{\hat c}_{\hat a}}-\fgrc_{ac}\chi\indices{^c_{\hat a}},\\
        (\gm[\chi,\fgrc])_{ac}&=\chi_{a\hat a}\fgrc\indices{^{\hat a}_c}-\fgrc_{a\hat a}\chi\indices{^{\hat a}_c}=\tfrac12(\chi_{a\hat a}\grc\indices{^{\hat a}_c}-\grc_{a\hat a}\chi\indices{^{\hat a}_c}),\\
        (\gm[\chi,\fgrc])_{\hat a\hat c}&=-\chi_{\hat aa}\fgrc\indices{^a_{\hat c}}+\fgrc_{\hat aa}\chi\indices{^a_{\hat c}}=\tfrac12(-\chi_{\hat aa}\grc\indices{^a_{\hat c}}+\grc_{\hat aa}\chi\indices{^a_{\hat c}}).
    \end{align*}
    Similarly, for the scalar curvature we have
        \begin{align*}
      \tdels\gr&=\tdels (\gm^{\alpha\beta}\grm\indices{^\gamma_{\alpha\gamma\beta}})=\chi^{\alpha\beta}\fgrc_{\alpha\beta}+\gm\indices{_\alpha^\beta}(D_{[\gamma}A\indices{_{\beta]}^{\gamma\alpha}}+D^{[\gamma}A\indices{^{\alpha]}_{\gamma\beta}})\\
      &=\tfrac12\chi^{\alpha\beta}\grc_{\alpha\beta}+\gm^{\alpha\beta}(D_\gamma A\indices{_{\alpha}^\gamma_\beta}-D_{\alpha}A\indices{_{\gamma}^\gamma_\beta})\\
      &=\tfrac12\chi^{\alpha\beta}\grc_{\alpha\beta}+D_c A\indices{_a^{ca}}+D_{\hat a} A\indices{_a^{\hat aa}}-D_a A\indices{_{\hat a}^{a\hat a}}-D_{\hat c} A\indices{_{\hat a}^{\hat c\hat a}}-\gm^{\alpha\beta}D_{(\alpha}\varepsilon_{\beta)}\\
      &=\tfrac12\chi^{\alpha\beta}\grc_{\alpha\beta}+(-\tfrac12D_cD_{\hat a}\chi^{c\hat a}-D_c\varepsilon^c)-\tfrac12D_{\hat a}D_a\chi^{\hat a a}-\tfrac12D_aD_{\hat a}\chi^{\hat aa}\\
      &\ \qquad -(\tfrac12D_{\hat c}D_a\chi^{\hat ca}-D_{\hat c}\varepsilon^{\hat c})-\gm^{\alpha\beta}D_{\alpha}\varepsilon_{\beta}\\
      &=\tfrac12\chi^{\alpha\beta}\grc_{\alpha\beta}-D_{\hat a}D_a\chi^{\hat a a}-D_aD_{\hat a}\chi^{a\hat a}-2\gm^{\alpha\beta}D_{\alpha}\varepsilon_{\beta},
    \end{align*}
    as claimed.
\end{proof}

\section{Einstein-Hilbert functional}

With the definition of scalar curvature from the previous section, we can now define the generalized Einstein-Hilbert action.  We give two definitions, one depending only on a generalized metric and choice of half-density $\gs$, where the relevant divergence operator is that induced by $\gs$, and one which allows for an arbitrary divergence operator.  We then compute the Euler-Lagrange equations for this action, building on the results of \cite{SeveraValach2}. The variation is expressed in terms of a particular choice of mixed-signature $L^2$ inner product on $\ms M \times \ms H^*$.  As explained in Remark \ref{r:innerproduct}, this choice is justified on physical grounds, and furthermore leads to a number of remarkable analytic properties as detailed in the sequel.  We end this section by deriving a Bianchi identity for generalized Ricci and scalar curvature.

    \begin{defn}
        Let $E$ be a Courant algebroid over a compact base $M$. The \emph{Einstein--Hilbert functional} is a function $\eh\colon \ms M\times\ms H^* \to \mathbb R$ defined by
        \[\eh(\gm,\sigma)=\int_M \gr_{\gm, {\gs}} \gs^2.\] 
    \end{defn}
    \begin{rmk} \label{r:EHLaplace}
        Using the operator defined in Remark \ref{rk:laplace}, the Einstein--Hilbert functional takes the particularly elegant and symmetric form \[\eh(\gm,\sigma)=-2\int_M\sigma \check\Delta_{\gm}\sigma.\]
    \end{rmk}

\subsection{Euler-Lagrange equations}

Here we use the variational formulas of \S \ref{s:variations} to compute the Euler-Lagrange equations for $\eh$.  We will furthermore interpret this answer in terms of a specific choice of mixed-signature inner product on the space $\ms M \times \ms H^*$ of generalized metrics and half-densities. 

\begin{defn} \label{d:L2metric} Let $E$ be a Courant algebroid over a compact base $M$.  Define a non-degenerate pairing $(\slot,\slot)$ on $\ms M\times\ms H^*$ by
    \[(\chi,\chi)_{\gm,\sigma}=\tfrac18\int_M|\chi|^2_{\gm}\,\sigma^2=-\tfrac18\int_M\langle \chi,\chi\rangle\sigma^2,\qquad (\nu,\nu)_{\gm,\sigma}=-2\int_M\nu^2.\]
\end{defn}

    \begin{rmk} \label{r:innerproduct}
        Note that the signs are chosen so that in the strictly positive case the pairing on the variation of generalized metrics is positive definite. The choice of constants in this norm correspond physically to a particular choice of the renormalisation scheme --- namely dimensional regularisation --- in the calculation of the renormalisation group flow (see \cite{Polchinski}) of the string nonlinear sigma model. 
        As we will detail below, this choice yields many key analytic properties as well, in particular that the evolution of the dilaton under the associated gradient flow (generalized Ricci flow) contains no second order derivative terms in the metric (see Lemma \ref{l:dillinearization}).
    \end{rmk}

    \begin{prop} \label{p:EHvariation}
      For a one-parameter family $(\gm_s, \sigma_s)$ we have
      \begin{align*}
        \dels \eh (\gm, \sigma) = \int_M \tfrac12\chi^{\alpha\beta} \grc_{\alpha\beta}\sigma^2+2\gr\sigma\nu=-4(\chi,\grc)_{\gm,\sigma}-(\nu,\gr\sigma)_{\gm,\sigma}.
      \end{align*}
    \end{prop}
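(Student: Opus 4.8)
The plan is to differentiate $\eh(\gm_s,\gs_s)=\int_M\gr_{\gm_s,\gs_s}\,\gs_s^2$ by the product rule, feed in the variational formula for $\gr$ from Proposition \ref{prop:variation}, and then clear away the derivative terms by an integration by parts adapted to the half-density $\gs$.

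First I would write
\[\dels\eh=\int_M(\dels\gr_{\gm_s,\gs_s})\,\gs^2+\int_M\gr\,\dels(\gs^2),\]
and use $\dels(\gs^2)=2\gs\nu$ to identify the second term as $2\int_M\gr\gs\nu$. For the first term, observe that the divergence appearing in $\eh$ is $\div_{\gs_s}$, so the relevant family of divergences in Proposition \ref{prop:variation} is $\div_s=\div_{\gs_s}$, with $\varepsilon=\dels\div_{\gs_s}\in\Gamma(E^*)$; a pleasant point of the argument is that the explicit form of this $\varepsilon$ will never be needed. Substituting
\[\dels\gr=\gm\indices{^\alpha_\gamma}\gm\indices{^\beta_\delta}D_\alpha D_\beta\chi^{\gamma\delta}-2\gm^{\alpha\beta}D_\alpha\varepsilon_\beta+\tfrac12\chi^{\alpha\beta}\grc_{\alpha\beta}\]
for a chosen $D=D_s\in LC(\gm_s,\div_{\gs_s})$ leaves three terms to integrate against $\gs^2$.

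The key step is to show that the first two terms are total $D$-divergences and so integrate to zero. Since $D$ is compatible with $\gm$ we have $D\gm=0$, and since $D$ also preserves $\langle\slot,\slot\rangle$ it annihilates $\gm$ with raised indices; hence, with $W^\alpha:=\gm\indices{^\alpha_\gamma}\gm\indices{^\beta_\delta}D_\beta\chi^{\gamma\delta}$, a Leibniz computation gives
\[\gm\indices{^\alpha_\gamma}\gm\indices{^\beta_\delta}D_\alpha D_\beta\chi^{\gamma\delta}=D_\alpha W^\alpha=\div_D W,\qquad -2\gm^{\alpha\beta}D_\alpha\varepsilon_\beta=-2\,\div_D(\gm\varepsilon).\]
Because $D\in LC(\gm,\div_\gs)$ we have $\div_D=\div_\gs$, and for any section $X$ of $E$ the divergence theorem on the compact manifold $M$ yields
\[\int_M(\div_\gs X)\,\gs^2=\int_M\mc L_X\gs^2=\int_M L_{\rho(X)}\gs^2=\int_M d\bigl(i_{\rho(X)}\gs^2\bigr)=0,\]
using the definition of $\div_\gs$, Cartan's formula, and that $\gs^2$ is a density. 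This is precisely where it matters that the divergence is half-density induced, and it is also why the unknown $\varepsilon$ drops out with no need to compute $\dels\div_{\gs_s}$. Hence $\int_M(\dels\gr)\,\gs^2=\tfrac12\int_M\chi^{\alpha\beta}\grc_{\alpha\beta}\,\gs^2$, establishing the first displayed equality.

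For the second equality I would unwind Definition \ref{d:L2metric}. The Ricci tensor $\grc$ is symmetric with vanishing $V_+\otimes V_+$ and $V_-\otimes V_-$ components, hence anticommutes with $\gm$ and lies in $T_\gm\ms M$; polarizing the identity $|\chi|^2_\gm=-\langle\chi,\chi\rangle$ valid on $T_\gm\ms M$ gives $(\chi,\grc)_{\gm,\gs}=-\tfrac18\int_M\langle\chi,\grc\rangle\,\gs^2=-\tfrac18\int_M\chi^{\alpha\beta}\grc_{\alpha\beta}\,\gs^2$ (indices raised and lowered with the Courant pairing), so $-4(\chi,\grc)_{\gm,\gs}=\tfrac12\int_M\chi^{\alpha\beta}\grc_{\alpha\beta}\,\gs^2$. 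Likewise $\gr\gs\in\ms H$, and the second pairing in Definition \ref{d:L2metric} gives $-(\nu,\gr\gs)_{\gm,\gs}=2\int_M\gr\gs\nu$. Adding the two contributions yields the asserted formula. I expect the only genuine subtlety to be the bookkeeping in the integration-by-parts step — in particular pulling the metric factors through $D$ and handling the implicitly $\gs$-dependent term $\varepsilon$ — while everything else is formal.
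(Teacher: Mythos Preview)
Your proof is correct and follows essentially the same route as the paper's: apply the scalar curvature variation from Proposition \ref{prop:variation}, then kill the derivative terms via $\int_M(\div_\gs e)\,\gs^2=\int_M L_{\rho(e)}\gs^2=0$, which is exactly the single fact the paper invokes. The only cosmetic point is that your appeal to Cartan's formula treats $\gs^2$ as a top form rather than a density, but the vanishing of $\int_M L_X\gs^2$ on a compact manifold is standard regardless.
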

    \begin{proof} 
      It follows immediately from Proposition \ref{prop:variation} and the fact that \[\int_M (\div_{\sigma} e) \sigma^2 = \int_M L_{\rho(e)}\sigma^2=0.\qedhere\]
    \end{proof}

\begin{rmk} Note that critical points of $\eh$ have vanishing Ricci and scalar curvatures.  However, as we detail in \S \ref{s:gradient}, the correct notion of a fixed point for generalized Ricci flow is that of a soliton, which will have vanishing Ricci curvature and constant, possibly nonzero, scalar curvature, and this is captured by a generalization of Perelman's $\gl$-functional.  For emphasis, we note that there are indeed even compact examples of generalized Ricci solitons with nonzero scalar curvature \cite{Streetssolitons, SU1}.
\end{rmk}

\begin{rmk}
  In the case of an exact Courant algebroid with trivial $H$-field, Proposition \ref{p:EHvariation} recovers Perelman's variational formula for the $\mathcal F$-functional \cite{Perelman1,OSW}.  In particular, here we set $\gs = e^{-\varphi} \gs_g$, and then $\gr_{\gm, \gs} = R + 4 \gD \varphi - 4 \brs{\N \varphi}^2 =: R^\varphi$.  Thus
\begin{align*}
    \eh (\gm, \gs) = \int_M \left( R + 4 \gD \varphi - 4 \brs{\N \varphi}^2 \right) e^{-2 \varphi} dV_g = \mathcal F(g, \varphi).
\end{align*}
Setting $\tdels{g} = h$ and $\tdels{\varphi} = \psi$ one has 
\begin{align*}
     \frac{d}{ds} \mathcal F( g_s,\varphi_s)  =&\ \int_M \left[ \IP{h, - \Rc - 2 \N^2 \varphi } + (\tfrac{1}{2} \tr_g h - 2 \psi) R^\varphi \right] e^{-2 \varphi} dV_g.
\end{align*}
It follows from further elementary calculations that
$\tdels{\gs}_g = \tfrac{1}{4} (\tr_g h) \gs_g$, so that
\begin{align*}
    \tdels{\gs} = \tfrac{1}{2} \left( \tfrac{1}{2} \tr_g h - 2 \psi \right) \gs.
\end{align*}
Thus we observe that the positive gradient flow for $\mathcal F$, noting the mixed sign inner product and weight on the half-density portion, is
\begin{align*}
    \dt g &= -2 (\Rc + 2 \N^2 \varphi), \qquad \dt \gs = - \tfrac{1}{2} R^\varphi \gs.
\end{align*}
\end{rmk}

\subsection{Generalized Bianchi identity}

Now using the invariance of the Einstein-Hilbert action under the Courant automorphism group we derive a general Bianchi identity for pairs $(\gm, \gs)$ as a consequence of this invariance.

\begin{prop}\label{prop:second_contracted_bianchi}
  Take $\gm\in\ms M$, $\sigma\in\ms H^*$, and $D\in LC(\gm,\div_\sigma)$. Then
    \[\gm\indices{^\alpha_\beta}D_\alpha\grc\indices{^\beta_\gamma}=\tfrac12D_\gamma\gr.\]
\end{prop}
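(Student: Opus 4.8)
The plan is to obtain the identity from invariance of the Einstein--Hilbert functional under the Courant automorphism group, in direct analogy with the classical derivation of $\div(\Rc-\tfrac12 Rg)=0$ from diffeomorphism invariance of $\int R\,dV$. Since the scalar curvature $\gr_{\gm,\gs}$, the $\gs$-induced divergence, and the half-density $\gs$ are all equivariant with respect to $\Aut(E)$, one has $\eh(\Phi^*\gm,\bar\Phi^{\,*}\gs)=\eh(\gm,\gs)$ for every $\Phi\in\Aut(E)$ covering a diffeomorphism $\bar\Phi$. Differentiating along a suitable one-parameter family of automorphisms and applying the variational formula of Proposition \ref{p:EHvariation} produces, after one integration by parts, a scalar identity that holds when contracted with an arbitrary section of $E$; the pointwise statement then follows since sections span each fibre.

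Concretely, fix $u\in\Gamma(E)$ and let $Z_u\in\mf{aut}(E)$ be the infinitesimal automorphism of Proposition \ref{prop:sections_automorphisms}, with flow $\Phi_s$. Then $\dds\Phi_s^*\gm=\mc L_u\gm$ and $\dds\bar\Phi_s^{\,*}\gs=L_{\rho(u)}\gs$, so invariance of $\eh$ combined with Proposition \ref{p:EHvariation} gives
\[0=\int_M\Bigl(\tfrac12(\mc L_u\gm)^{\alpha\beta}\grc_{\alpha\beta}+2\gr\,\gs\,L_{\rho(u)}\gs\Bigr)\gs^2.\]
Now fix $D\in LC(\gm,\div_\gs)$. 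For the first term, substitute the formula for $\mc L_u\gm$ from Proposition \ref{prop:killing}; since $\grc$ is symmetric and anticommutes with $\gm$ (equivalently $\grc_{ac}=\grc_{\hat a\hat c}=0$), the tensor $\grc\gm$ is skew, which collapses the antisymmetrisations and reduces $\tfrac12(\mc L_u\gm)^{\alpha\beta}\grc_{\alpha\beta}$ to a multiple of $(D_\alpha u_\gamma)\gm\indices{^\alpha_\beta}\grc^{\beta\gamma}$. Integrating against $\gs^2$, using $D\gm=0$ and the fact that $\int_M(\div_D V)\gs^2=\int_M L_{\rho(V)}\gs^2=0$ for every $V\in\Gamma(E)$ --- valid precisely because $\div_D=\div_\gs$ --- moves a derivative onto $\grc$ and produces $2\int_M u^\gamma\,\gm\indices{^\alpha_\beta}D_\alpha\grc\indices{^\beta_\gamma}\,\gs^2$. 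For the second term, write $2\gr\,\gs\,L_{\rho(u)}\gs=\gr\,L_{\rho(u)}(\gs^2)$ and integrate by parts (using $\int_M L_{\rho(u)}\mu=0$ for a density $\mu$ on compact $M$) to obtain $-\int_M(\rho(u)\gr)\,\gs^2=-\int_M u^\gamma D_\gamma\gr\,\gs^2$. Adding the two, $\int_M u^\gamma\bigl(2\gm\indices{^\alpha_\beta}D_\alpha\grc\indices{^\beta_\gamma}-D_\gamma\gr\bigr)\gs^2=0$ for all $u\in\Gamma(E)$, whence $\gm\indices{^\alpha_\beta}D_\alpha\grc\indices{^\beta_\gamma}=\tfrac12 D_\gamma\gr$.

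I expect the main difficulty to be organizational rather than conceptual. The one genuinely load-bearing structural input is that $\grc$ anticommutes with $\gm$, which is what turns $\grc\gm$ into a $2$-form and lets the $D_{[\alpha}u_{\gamma]}$ of Proposition \ref{prop:killing} be replaced by $D_\alpha u_\gamma$; the rest is bookkeeping of raising/lowering with $\eta$ and of the constants. One should also be careful that $\eh$ depends only on the divergence induced by $\gs$, so the automorphism variation carries no independent variation of the divergence operator and Proposition \ref{p:EHvariation} applies without change, and that the divergence theorem $\int_M(\div_D V)\gs^2=0$ must be invoked with the specific $D\in LC(\gm,\div_\gs)$ appearing in the statement. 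A direct alternative --- commuting covariant derivatives inside the definitions of $\grc$ and $\gr$ --- is available but substantially messier, so the variational route is preferable.
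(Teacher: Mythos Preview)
Your proposal is correct and follows essentially the same route as the paper's own proof: differentiate $\eh$ along the flow of $Z_u$, insert the formula for $\mc L_u\gm$ from Proposition \ref{prop:killing}, exploit the anticommutation $\gm\grc=-\grc\gm$ to drop the antisymmetrisation, and integrate by parts using $\int_M(\div_\gs V)\,\gs^2=0$. One small slip: in your displayed variational identity you have placed the term $2\gr\,\gs\,L_{\rho(u)}\gs$ inside the bracket multiplied by $\gs^2$, whereas by Proposition \ref{p:EHvariation} this term is already a density and should not carry the extra $\gs^2$; your subsequent manipulation treats it correctly, so this is purely a typographical issue.
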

\begin{proof}
    Fix a section $u\in\Gamma(E)$ and use it to generate a one-parameter family of Courant automorphisms $\Phi_s$.  Pulling back $\gm$ and $\gs$ by this family gives a one-parameter family $(\gm_s,\gs_s)$ satisfying
    \[\frac{\del}{\del s}\gm=\mc L_u\gm,\qquad \frac{\del}{\del s}\sigma=\mc L_u\sigma=L_{\rho(u)}\sigma.\]
    Since $\eh$ is invariant under the action of Courant automorphisms, using Proposition \ref{prop:killing} we have
    \begin{align*}
        0&=\frac{d}{ds} \eh(\gm_s, \gs_s) =\int_M \tfrac12(\mc L_u\gm)^{\alpha\beta}\grc_{\alpha\beta}\sigma^2+2\gr\sigma L_{\rho(u)}\sigma\\
        &=\int_M 2(D^{[\alpha}u^{\gamma]})\gm\indices{_\gamma^\beta}\grc_{\alpha\beta}\sigma^2+\gr L_{\rho(u)}\sigma^2=\int_M 2(D^{\alpha}u^{\gamma})\gm\indices{_\gamma^\beta}\grc_{\alpha\beta}\sigma^2-(L_{\rho(u)}\gr)\sigma^2.
    \end{align*}
    Using $\int (D_\beta v^\beta)\sigma^2=\int (\div_{\sigma} v)\sigma^2=0$ and $L_{\rho(u)}\gr=D_u\gr$ we then have
    \[0=\int_Mu^\gamma(-2\gm\indices{_\gamma^\beta}D^\alpha\grc_{\alpha\beta}-D_\gamma \gr)\sigma^2.\]
    Using $\gm\grc=-\grc\gm$ and the fact that the above holds for any $u$, the proposition follows.
\end{proof}

\section{Generalized Ricci flow}
\subsection{Definition as gradient flow of the Einstein--Hilbert functional}

\begin{defn}
        The \emph{generalized Ricci flow} is the gradient flow of $\tfrac12\on{grad}(\eh)$ on $\ms M\times \ms H^*$, where the gradient is taken using the metric from Definition \ref{d:L2metric}, i.e.
            \[\dt{\gm}=-2\grc_{\gm,\gs},\qquad \dt \sigma=-\tfrac{1}{2}\gr_{\gm,\gs} \sigma.\]
\end{defn}

\begin{rmk} We note that this equation is well-defined even if $\gm_t$ are generalized pseudometrics, and this point of view was explored in \cite{SeveraValach2}.  The fundamental analytic results to follow all require that $\gm$ is a generalized metric, which renders the generalized Ricci flow degenerate parabolic.
\end{rmk}

\begin{rmk} \label{r:grfwdivg}
    It is natural to consider an even more general equation defined using a general family of divergence operators.  In particular one can consider the generalized Ricci flow system
    \begin{align*} \label{f:GRFwdivg}
        \dt \gm &= -2 \grc_{\gm,\divg}, \qquad \dt \divg = - \differential \gr_{\gm,\divg}.
    \end{align*}
    All of the analytic theory we develop below applies for this system by adopting the ansatz (cf.\ Lemma \ref{lem:vary_divergence} below)
    \begin{align*}
        \dt \gm =&\ -2 \grc_{\gm,\divg}, \qquad \dt \gs = - \tfrac{1}{2} \gr_{\gm,\divg} \gs, \qquad \divg_t = \divg_0 + 2 \differential \log\frac{\gs}{\gs_0}.
    \end{align*}
\end{rmk}

\begin{rmk} \label{r:halfdensitygauge} For generalized metrics on transitive Courant algebroids there is always an induced classical Riemannian metric (Definition \ref{d:inducedmetricdilaton}), and thus an induced Riemannian half-density $\gs_g$, with associated divergence operator.  Through the use of Proposition \ref{p:Riccidivergencechange}, one sees that it is possible to gauge fix a solution to generalized Ricci flow as we have defined it to achieve a family of generalized metrics satisfying
\begin{align*}
    \dt \gm &= -2 \grc_{\gm, {\gs_g}}.
\end{align*}
There is an obvious appeal to this, as it seems we have removed one function from our system, and in the case of exact Courant algebroids with trivial $H$ field this is then the classic Ricci flow equation.  However, the analytic arguments to follow would be much more opaque from this point of view.  Furthermore, the flow of half-densities is strictly necessary to obtain the scalar curvature monotonicity to follow, and furthermore this flow very cleanly captures important relationships between the scalar curvature monotonicity, Perelman's Harnack estimates, and gauge invariance of the system.
\end{rmk}

\subsection{Induced flow of Riemannian metric and dilaton}
        Recall that for any $\gm\in\ms M^+$, the map $\hat\rho$ was defined as the inverse of $\rho|_{V_+}\colon V_+\to TM$ followed by the inclusion $V_+\to E$.
        \begin{lemma} \label{l:rhohatvariation}
            Given $\gm_s$ a family of generalized metrics, one has
            \[\dels{\hat\rho}=\tfrac12(1-\hat\rho\rho)\chi\hat \rho.\]
        \end{lemma}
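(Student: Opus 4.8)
The plan is to differentiate the two identities that characterize $\hat\rho$ and then solve for $\dels\hat\rho$ by decomposing into $V_+$ and $V_-$ components. Recall that for $\gm\in\ms M^+$ the map $\hat\rho\colon TM\to E$ is pinned down by two facts: it takes values in the $(+1)$-eigenbundle $V_+$ of $\gm$ — equivalently $\gm\hat\rho=\hat\rho$ — and it is a right inverse of the anchor, $\rho\circ\hat\rho=\Id_{TM}$. Since $\rho|_{V_+}$ depends smoothly on $\gm_s$, so does its inverse, so $\hat\rho_s$ is differentiable in $s$; and crucially the anchor $\rho$ is a fixed part of the Courant algebroid structure and does not vary with $s$.

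First I would differentiate $\gm_s\hat\rho_s=\hat\rho_s$ in $s$, which gives $\chi\hat\rho+\gm\,\dels\hat\rho=\dels\hat\rho$, i.e.\ $(\Id-\gm)\dels\hat\rho=\chi\hat\rho$. Writing $p_-=\tfrac12(\Id-\gm)$ this says $p_-\,\dels\hat\rho=\tfrac12\chi\hat\rho$, which determines the $V_-$-component of $\dels\hat\rho$. Note that $\chi\hat\rho$ does indeed lie in $\Gamma(V_-)$: $\hat\rho$ lands in $V_+$ and $\chi$ anticommutes with $\gm$, so $\chi$ maps $\Gamma(V_+)$ into $\Gamma(V_-)$; hence this differentiated identity is consistent rather than overdetermined.

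Next I would differentiate $\rho\circ\hat\rho_s=\Id_{TM}$, which, as $\rho$ is $s$-independent, yields $\rho\,\dels\hat\rho=0$. Splitting $\dels\hat\rho=p_+\dels\hat\rho+p_-\dels\hat\rho$ and using the previous step, $\rho\big(p_+\dels\hat\rho\big)=-\rho\big(p_-\dels\hat\rho\big)=-\tfrac12\,\rho\chi\hat\rho$. Because $\rho|_{V_+}$ is an isomorphism whose inverse (followed by inclusion) is exactly $\hat\rho$, the composite $\hat\rho\circ\rho$ restricts to the identity on $V_+$; applying $\hat\rho$ to the previous equation therefore gives $p_+\dels\hat\rho=-\tfrac12\,\hat\rho\rho\,\chi\hat\rho$. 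Adding the two components produces $\dels\hat\rho=\tfrac12\chi\hat\rho-\tfrac12\hat\rho\rho\,\chi\hat\rho=\tfrac12(\Id-\hat\rho\rho)\chi\hat\rho$, which is the claim.

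I do not expect a serious obstacle; the computation is essentially forced once one decides to use the two defining relations for $\hat\rho$. The only subtleties worth stating explicitly are the two structural observations used above: that $\chi\hat\rho$ is automatically a section of $V_-$, and that $\hat\rho\circ\rho$ acts as the identity on $V_+$ — both immediate from Definitions \ref{d:genmetric} and \ref{d:inducedmetricdilaton}.
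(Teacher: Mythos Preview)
Your proof is correct but takes a different route from the paper's. The paper constructs, for $s$ small, an explicit invertible map $\Phi\colon E\to E$ (built from $\gm_0$, $\gm_s$, and $\hat\rho\rho$) satisfying $\rho\circ\Phi=\rho$ and $\Phi(V_+')=V_+$, deduces the finite-$s$ relation $\hat\rho'=\Phi^{-1}\circ\hat\rho$, and then differentiates at $s=0$. You instead differentiate the two defining identities $\gm\hat\rho=\hat\rho$ and $\rho\hat\rho=\Id_{TM}$ directly and solve the resulting linear system by splitting $\dels\hat\rho$ into its $V_\pm$ components, using that $\hat\rho\rho|_{V_+}=\Id_{V_+}$ to invert on the $V_+$ side. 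Your argument is more elementary and self-contained, avoiding the auxiliary construction; the paper's approach has the mild advantage of producing a finite-$s$ formula for $\hat\rho_s$ in terms of $\hat\rho_0$, though only its derivative is used here.
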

        \begin{proof}
            First, note that for $s$ sufficiently small, $\gm = \gm_0$ and $\gm' = \gm_s$ satisfy that the map $u\mapsto \tfrac12(1+\gm'\!\gm)u$ is invertible and maps $V_+$ to $V_+'$. Using this we get that the invertible map
            \[\Phi\colon E\to E,\qquad \Phi(u)=\tfrac12(1+\hat\rho\rho\gm'\!\gm)(\tfrac12(1+\gm'\!\gm))^{-1}\]
            satisfies both $\rho\circ \Phi=\rho$ and $\Phi(V_+')=V_+$. We then get $\hat\rho'=\Phi^{-1}\circ \hat\rho$, and differentiating this in $s$ gives the result.
        \end{proof}
        
        \begin{prop}\label{prop:dilaton_transitive}
            Suppose $(\gm_t,\sigma_t)\in \ms M^+\times\ms H^*$ is a solution of the generalized Ricci flow.  Then the associated family of Riemannian metrics and dilatons $(g_t, \varphi_t)$ satisfy
            \begin{gather} \label{f:reducedflows}
            \begin{split}
            \dt g &= -2(\rho\grc\hat\rho)\cdot g, \\
            \dt\varphi&= \tfrac12\on{Tr}(\rho\grc\hat\rho)+\tfrac12\gr,
            \end{split}
            \end{gather}
            where $(A\cdot g)(X,Y):=-\tfrac12(g(A X,Y)+g(X,AY))$.
        \end{prop}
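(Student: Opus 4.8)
The plan is to differentiate the two defining relations $g(X,Y)=\tfrac12\langle\hat\rho(X),\hat\rho(Y)\rangle$ from Definition \ref{d:inducedmetricdilaton} and $\sigma=e^{-\varphi}\sigma_g$ directly along the flow, feeding in the generalized Ricci flow equations $\dt\gm=-2\grc$, $\dt\sigma=-\tfrac12\gr\sigma$ together with the variation formula for $\hat\rho$ from Lemma \ref{l:rhohatvariation}.

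For the metric equation, since the Courant pairing $\langle\cdot,\cdot\rangle$ and the anchor $\rho$ do not depend on $t$, differentiating gives $\dt g(X,Y)=\tfrac12\big(\langle\dt\hat\rho(X),\hat\rho(Y)\rangle+\langle\hat\rho(X),\dt\hat\rho(Y)\rangle\big)$. Here $\chi:=\dt\gm=-2\grc$ is a genuine element of $T_{\gm}\ms M$ (Definition \ref{d:Mtangent}), since $\grc$ is symmetric and anticommutes with $\gm$, so Lemma \ref{l:rhohatvariation} applies and yields $\dt\hat\rho=-(1-\hat\rho\rho)\grc\hat\rho$. The point to exploit is that $\grc$, as an endomorphism of $E$, exchanges $V_+$ and $V_-$ (because $\grc_{ac}=\grc_{\hat a\hat c}=0$), so $w:=\grc\hat\rho(X)\in\Gamma(V_-)$. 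Pairing $(1-\hat\rho\rho)w$ against $\hat\rho(Y)\in\Gamma(V_+)$, the term $\langle w,\hat\rho(Y)\rangle$ vanishes by orthogonality of the eigenbundle decomposition $E=V_+\oplus V_-$, while $\langle\hat\rho\rho w,\hat\rho(Y)\rangle=2g(\rho w,Y)$ using $\rho\circ\hat\rho=\Id_{TM}$ and the identity $\langle a_+,b_+\rangle=2g(\rho a_+,\rho b_+)$ on $V_+$, which is immediate from the definition of $g$. Hence $\langle\dt\hat\rho(X),\hat\rho(Y)\rangle=2g(\rho\grc\hat\rho(X),Y)$, and symmetrizing in $X,Y$ produces exactly $\dt g=-2(\rho\grc\hat\rho)\cdot g$.

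For the dilaton equation, write $\varphi=-\log\sigma+\log\sigma_g$, so that $\dt\varphi=-\sigma^{-1}\dt\sigma+\sigma_g^{-1}\dt\sigma_g=\tfrac12\gr+\sigma_g^{-1}\dt\sigma_g$ by the flow of $\sigma$. Since $\sigma_g^2$ is the Riemannian volume form of $g$ (Definition \ref{def:densities}), differentiating gives $\dt\sigma_g=\tfrac14(\tr_g\dt g)\,\sigma_g$; substituting the metric equation just obtained, together with $\tr_g\big((\rho\grc\hat\rho)\cdot g\big)=-\on{Tr}(\rho\grc\hat\rho)$, yields $\tr_g\dt g=2\on{Tr}(\rho\grc\hat\rho)$, hence $\sigma_g^{-1}\dt\sigma_g=\tfrac12\on{Tr}(\rho\grc\hat\rho)$. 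Combining the two contributions gives $\dt\varphi=\tfrac12\on{Tr}(\rho\grc\hat\rho)+\tfrac12\gr$.

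I expect the only delicate point to be the bookkeeping with the operator $1-\hat\rho\rho$ appearing in Lemma \ref{l:rhohatvariation}: one has to recognize that it is applied to a section of $V_-$, and that after pairing with $V_+$ the identity part drops out by orthogonality of the eigenbundle splitting while the $\hat\rho\rho$ part is precisely what converts the Courant inner product into $2g$. Everything else is a routine computation.
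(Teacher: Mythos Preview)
Your proof is correct and follows essentially the same route as the paper: both differentiate $g(X,Y)=\tfrac12\langle\hat\rho X,\hat\rho Y\rangle$ using Lemma \ref{l:rhohatvariation} with $\chi=-2\grc$, drop the $\langle w,\hat\rho Y\rangle$ term via the $V_+\perp V_-$ orthogonality, and convert the remaining $\hat\rho\rho$ term to $2g$; the dilaton computation via $\dt\sigma_g=\tfrac14(\tr_g\dt g)\sigma_g$ is likewise identical. The only cosmetic difference is that the paper polarizes (computing $\dt g(X,X)$), while you keep the bilinear form and symmetrize explicitly.
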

        \begin{proof}
            We directly calculate to get
            \begin{align*}
                \tdt g(X,X)&=\langle \hat\rho X,\tdt \hat\rho X\rangle=-\langle \hat\rho X,(1-\hat\rho\rho)\grc\hat\rho X\rangle=\langle \hat\rho X,\hat\rho \rho\grc\hat\rho X\rangle=2g(X,\rho\grc\hat\rho X).
            \end{align*}
            From $(A\cdot g)_{ij}=-A^k{}_{\vphantom{j}(i}g_{j)k}$ we have $g^{ij}(A\cdot g)_{ij}=-A^i{}_i$, and so
            \begin{align*}
                \tdt\varphi&=\sigma_g^{-1}\tdt\sigma_g-\sigma^{-1}\tdt\sigma=\tfrac14g^{ij}\tdt g_{ij}+\tfrac12\gr=\tfrac12\on{Tr}(\rho\grc\hat\rho)+\tfrac12\gr.\qedhere
            \end{align*}
        \end{proof}

\begin{ex} \label{ex:exact-flow}
  Let $E$ be an exact Courant algebroid. Choose an identification $E\cong TM\oplus T^*M$, corresponding to a particular (fixed) $H_0\in\Omega^3_{cl}(M)$. Then generalized metrics are given by graphs of $g+B$, with $g$ a Riemannian metric and $B\in\Omega^2(M)$. In terms of the variables $g,B,\varphi$, and defining $H:=H_0+dB$, the generalized Ricci flow takes the form
  \[\dt(g+B)=-2\rc^+-2 L_{\nabla\varphi}g-2i_{\nabla\varphi}H,\qquad \dt\varphi=\Delta_g \varphi-2(\nabla\varphi)^2+\tfrac1{12}H_{ijk}H^{ijk}.\]
  Decomposing the former, we obtain
  \[\dt g_{ij}=-2\rc_{ij}+\tfrac12H_{ikl}H_j{}^{kl}-4\nabla_i\nabla_j \varphi,\qquad \dt B_{ij}= \nabla^kH_{kij}-2(\nabla^k\varphi)H_{kij}.\]
  These are the usual generalized Ricci flow equations (cf.\ \cite{GRFbook}), together with a solution to the dilaton flow \cite{Streetsscalar}, which in terms of the variables here is $2 \varphi$.
\end{ex}

\begin{ex}
    A further example occurs in \cite{GF14}, where the explicit generalized Ricci flow equations are derived for a class of transitive Courant algebroids obtained by reduction (of equivariant exact Courant algebroids). The resulting equation can be expressed in terms of a metric $g$, a two-form $B$, a dilaton $\varphi$, and a principal $G$-connection $A$ with curvature $F$ as
\begin{align*}
    \dt g_{ij}&=-2\rc_{ij}-4\nabla_i\nabla_j\varphi+\tfrac12H\indices{_i^k^l}H_{jkl}+\tfrac12\on{Tr} F_{ik}F\indices{_j^k},\\
    \dt B_{ij}&=\nabla^kH_{kij}-2(\nabla^k\varphi)H_{kij},\qquad H=H_0+dB\\
    \dt\varphi&=\Delta_g\varphi-2(\nabla\varphi)^2+\tfrac1{12}H_{ijk}H^{ijk}+\tfrac{1}{16}\on{Tr}F_{ij}F^{ij},\\
    \dt A_i&=\nabla^kF_{ki}-2(\nabla^k\varphi)F_{ki}-\tfrac12H_{ijk}F^{jk},
\end{align*}
where $\nabla$ is the combination of the Levi-Civita connection with $A$, and $\on{Tr}(XY)$ is an invariant inner product on $\mf g$. Thus it is a further coupling of the generalized Ricci flow on exact Courant algebroids (\ref{f:exactGRF}) to Yang-Mills flow.
\end{ex}

\subsection{Scalar curvature monotonicity}

\begin{lemma}\label{lem:vary_divergence}
            Given $\sigma_s\in\ms H^*$ such that $\frac{\del}{\del s} \gs = \nu$, one has
            \begin{align*}
\dels \div_{\sigma} = 2\differential(\tfrac{\nu}{\sigma}).
            \end{align*}
        \end{lemma}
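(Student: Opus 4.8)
The plan is to reduce the claim to the elementary transformation law for half-density-induced divergences under a conformal change. Recall from the ``Divergences'' subsection that $\div_\sigma u = 2L_{\rho(u)}\log\sigma$, from which one reads off immediately that $\div_{e^f\sigma} = \div_\sigma + 2\differential f$ for any $f\in C^\infty(M)$ (this is exactly the identity used in the proof of the $\ms Div$-proposition of \cite{SeveraValach2} above). Now, since every $\sigma_s$ lies in $\ms H^*$, the ratio $\sigma_s/\sigma_0$ is a smooth \emph{positive} function on $M$, so $f_s := \log(\sigma_s/\sigma_0)$ is a well-defined smooth function with $\sigma_s = e^{f_s}\sigma_0$. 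Applying the transformation law with the fixed background $\sigma_0$ gives $\div_{\sigma_s} = \div_{\sigma_0} + 2\differential f_s$ for all $s$.

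Differentiating this identity in $s$, the term $\div_{\sigma_0}$ is constant, so $\dels \div_{\sigma_s} = 2\differential(\dels f_s)$, and it only remains to compute $\dels f_s$. Since $\sigma_0$ is fixed, $\dels(\sigma_s/\sigma_0) = (\dels\sigma_s)/\sigma_0 = \nu/\sigma_0$, whence $\dels f_s = \dels\log(\sigma_s/\sigma_0) = (\nu/\sigma_0)/(\sigma_s/\sigma_0) = \nu/\sigma$, where $\nu/\sigma$ is a genuine function since it is the ratio of two half-densities. This yields $\dels\div_{\sigma_s} = 2\differential(\nu/\sigma)$, as claimed.

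Alternatively, one can verify the formula by a direct computation: evaluating on a section $u\in\Gamma(E)$ one has $\div_\sigma u = 2\sigma^{-1}L_{\rho(u)}\sigma$, and differentiating via the Leibniz rule for $s$-derivatives and Lie derivatives of densities gives $\dels(\div_\sigma u) = -2\sigma^{-2}\nu\,L_{\rho(u)}\sigma + 2\sigma^{-1}L_{\rho(u)}\nu = 2L_{\rho(u)}(\nu/\sigma) = 2\rho(u)(\nu/\sigma) = 2\langle \differential(\nu/\sigma), u\rangle$, which is the same statement under the identification $E\cong E^*$.

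There is no real obstacle here; the only points requiring care are bookkeeping ones: first, that $\dels\div_{\sigma_s}$, being the derivative of a curve in the affine space $\ms Div$, is automatically of the form $\langle e,\slot\rangle$ for a section $e$, so the right-hand side $2\differential(\nu/\sigma)$ lands in the correct space; and second, keeping track of the density weights so that $\nu/\sigma$ is correctly recognized as a function (weight $0$) on which $\differential$ can be applied. Both are routine, so the direct-computation route or the conformal-change route can be written out in a couple of lines.
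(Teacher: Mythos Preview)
Your proof is correct and follows essentially the same route as the paper: both use the transformation law $\div_{e^f\sigma}=\div_\sigma+2\differential f$, write $\sigma_s=e^{f_s}\sigma_0$, and differentiate $f_s=\log(\sigma_s/\sigma_0)$ to obtain $2\differential(\nu/\sigma)$. Your alternative direct computation via $\div_\sigma u = 2\sigma^{-1}L_{\rho(u)}\sigma$ is a nice supplementary check not present in the paper.
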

        \begin{proof}
            Since $\div_{e^f\sigma}-\div_{\sigma}=2\differential f$, we compute
            \begin{align*}
            \tdels \div_{\sigma} = \tdels (\div_{\sigma}-\div_{\sigma_0})=2 \tdels (\differential \log\tfrac{\sigma}{\sigma_0}) =2\differential(\tdels \log\tfrac{\sigma}{\sigma_0})=2\differential(\tfrac{\nu}{\sigma}),
            \end{align*}
            as claimed.
        \end{proof}

\begin{thm} \label{t:scalarcurvmon} (cf.\ Theorem \ref{t:scalarevolutionintro})
            Let $(\gm_t, \gs_t)$ be a solution to generalized Ricci flow on a compact manifold.  Then for all smooth existence times $t$ one has
            \[\dt\gr= \gD_{\gm,\gs} \gr+\,\vert\!\grc\!\vert_{\gm}^2.\]
\end{thm}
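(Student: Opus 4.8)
The plan is to differentiate $\gr$ directly using the variational formula of Proposition \ref{prop:variation}, feeding in the evolution equations of generalized Ricci flow, and then to invoke the contracted Bianchi identity of Proposition \ref{prop:second_contracted_bianchi} to rewrite the resulting divergence term as $\gD_{\gm,\gs}\gr$.

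First I would record the two variations entering Proposition \ref{prop:variation}. Along the flow $\chi := \dt\gm = -2\grc_{\gm,\gs}$, while $\nu := \dt\gs = -\tfrac12\gr_{\gm,\gs}\gs$, so by Lemma \ref{lem:vary_divergence} the induced variation of the divergence is $\varepsilon := \dt\divg_{\gs} = 2\differential(\nu/\gs) = -\differential\gr$. Substituting $\chi$ and $\varepsilon$ into the formula $\dels\gr = \gm\indices{^\alpha_\gamma}\gm\indices{^\beta_\delta}D_\alpha D_\beta\chi^{\gamma\delta} - 2\gm^{\alpha\beta}D_\alpha\varepsilon_\beta + \tfrac12\chi^{\alpha\beta}\grc_{\alpha\beta}$ of Proposition \ref{prop:variation} is legitimate, since $\gr$ depends only on $\gm$ and $\divg_{\gs}$ and we may therefore use any family $D_t \in LC(\gm_t,\divg_{\gs_t})$. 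Of the three resulting terms, the middle one is $2\gm^{\alpha\beta}D_\alpha(\differential\gr)_\beta = 2\div_D(\gm\differential\gr) = 2\gD_{\gm,\gs}\gr$ by Definition \ref{d:Laplace}, and the last one is $-\grc^{\alpha\beta}\grc_{\alpha\beta}$; since $\grc$ is symmetric with vanishing $V_+\otimes V_+$ and $V_-\otimes V_-$ parts and $\gm$ acts as $\pm\Id$ on $V_\pm$, this equals $|\grc|^2_{\gm}$, in accordance with the sign conventions of Definition \ref{d:L2metric} and Remark \ref{r:strictpositive}.

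The main point is the first term $-2\,\gm\indices{^\alpha_\gamma}\gm\indices{^\beta_\delta}D_\alpha D_\beta\grc^{\gamma\delta}$. Lowering indices with the (parallel) pairing it equals $-2\,\gm^{\alpha\sigma}\gm^{\beta\tau}D_\alpha D_\beta\grc_{\sigma\tau}$, where $\gm^{\alpha\beta} := \gm\indices{^\alpha_\gamma}\eta^{\gamma\beta}$ is symmetric. To evaluate $\gm^{\alpha\sigma}\gm^{\beta\tau}D_\alpha D_\beta\grc_{\sigma\tau}$ I would apply a $\gm$-contracted covariant derivative to the Bianchi identity of Proposition \ref{prop:second_contracted_bianchi}: writing $w_\gamma := \gm^{\alpha\beta}D_\alpha\grc_{\beta\gamma}$, that identity reads $w_\gamma = \tfrac12 D_\gamma\gr$, and forming $\gm^{\gamma\delta}D_\delta w_\gamma$, the parallelism of $\gm$ together with the symmetry of $\grc$ gives
\[\gm^{\alpha\sigma}\gm^{\beta\tau}D_\alpha D_\beta\grc_{\sigma\tau} = \gm^{\gamma\delta}D_\delta w_\gamma = \tfrac12\,\gm^{\gamma\delta}D_\delta D_\gamma\gr = \tfrac12\,\gD_{\gm,\gs}\gr.\]
Hence the first term equals $-\gD_{\gm,\gs}\gr$, and summing the three contributions yields $\dt\gr = -\gD_{\gm,\gs}\gr + 2\gD_{\gm,\gs}\gr + |\grc|^2_\gm = \gD_{\gm,\gs}\gr + |\grc|^2_\gm$, as claimed.

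The only delicate step is the display above: one must keep $\gm$ and $\eta$ straight when raising and lowering, use $D\gm = 0$ to commute the $\gm$-contractions past $D$, and check that the order of the two derivatives in $\gm^{\gamma\delta}D_\delta D_\gamma\gr$ matches the convention defining $\gD$ in Definition \ref{d:Laplace}. Because $\grc$ and the $\gm$-pairing are symmetric, no curvature commutator terms appear and the identity follows; the remaining manipulations are routine substitutions.
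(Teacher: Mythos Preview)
Your proof is correct and follows essentially the same approach as the paper: substitute $\chi=-2\grc$ and $\varepsilon=-\differential\gr$ into Proposition~\ref{prop:variation}, identify the middle term as $2\gD_{\gm,\gs}\gr$, use $\grc^{\alpha\beta}\grc_{\alpha\beta}=-|\grc|_{\gm}^2$ for the last term, and apply the Bianchi identity of Proposition~\ref{prop:second_contracted_bianchi} to reduce the first term to $-\gD_{\gm,\gs}\gr$. Your write-up spells out the index bookkeeping for the Bianchi step in more detail than the paper, but the argument is the same.
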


\begin{proof} We note that by Lemma \ref{lem:vary_divergence}, one has $\dt \divg_{\gs} = - \differential \gr$.  Thus substituting $\chi=-2\grc$ and $\varepsilon=-\differential\gr$ into Proposition \ref{prop:variation} we obtain
            \[\dt\gr=-2\gm\indices{^\alpha_\gamma} \gm\indices{^\beta_\delta} D_\alpha D_\beta\grc^{\gamma\delta}+2\gm^{\alpha\beta}D_\alpha D_\beta\gr-\grc^{\alpha\beta}\grc_{\alpha\beta}.\]
            The result then follows from Proposition \ref{prop:second_contracted_bianchi} and the fact $\grc^{\alpha\beta}\grc_{\alpha\beta}=-\,\vert\!\grc\!\vert_{\gm}^2$.
\end{proof}

\begin{cor} \label{c:scalarlb} Let $(\gm_t, \gs_t)$ be a solution to generalized Ricci flow on a compact manifold with strictly positive initial data.  Then for all smooth existence times $t > 0$ one has
\begin{align*}
    \inf_{M \times \{t\}} \gr \geq \inf_{M \times \{0\}} \gr.
\end{align*}
\begin{proof} Note that the operator $\gD_{\gm,\gs}$ is strictly elliptic.  Furthermore as the metrics are strictly positive, one has $\brs{\grc}_{\gm}^2 \geq 0$, thus $\gr$ is a supersolution of a strictly parabolic equation.  The result follows from the maximum principle.
\end{proof}
\end{cor}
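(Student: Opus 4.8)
The plan is to obtain the inequality directly from the evolution equation of Theorem~\ref{t:scalarcurvmon} by applying the scalar parabolic maximum principle on the compact manifold $M$, in the same spirit as the proof that scalar curvature lower bounds are preserved under classical Ricci flow.

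The first task is to check that the reaction term $\brs{\grc}^2_{\gm}$ is nonnegative along the flow. By Remark~\ref{r:strictpositive} the fiberwise norm $\brs{\slot}_{\gm}$ on tensor powers of $E$ is induced from the inner product $\langle\gm\slot,\slot\rangle$, which is positive definite precisely when $\gm$ is strictly positive, and then $\brs{\grc}^2_{\gm}\geq 0$. Strict positivity is not lost along the flow: for a fixed Courant algebroid, positive definiteness of $\langle\gm\slot,\slot\rangle$ is equivalent to $\langle\slot,\slot\rangle|_{V_-}$ being negative definite, which constrains only the signature of the ambient pairing relative to $\dim M$ and hence holds for every generalized metric on $E$ as soon as it holds for one. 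Since each $\gm_t$ is a generalized metric (as is already required for Theorem~\ref{t:scalarcurvmon}) and $\gm_0$ is strictly positive by hypothesis, every $\gm_t$ is strictly positive, so $\brs{\grc(\gm_t,\gs_t)}^2_{\gm_t}\geq 0$.

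With this in hand, Theorem~\ref{t:scalarcurvmon} gives $\left(\dt-\gD_{\gm,\gs}\right)\gr=\brs{\grc}^2_{\gm}\geq 0$, so $\gr$ is a supersolution of the linear heat equation associated to $\gD_{\gm,\gs}$. By Lemma~\ref{l:weightedlaplacian}, $\gD_{\gm,\gs}$ equals $\gD_g-2\,g^{-1}(d\slot,d\varphi)$ for the time-dependent induced metric $g_t$ and dilaton $\varphi_t$, which is strictly elliptic and has no zeroth order term, so the differential inequality is strictly parabolic. The weak parabolic maximum principle on $M\times[0,t]$ then shows that $s\mapsto\inf_{M\times\{s\}}\gr$ is nondecreasing on $[0,t]$, which is the assertion. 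The only substantive point is the nonnegativity $\brs{\grc}^2_{\gm}\geq 0$, i.e.\ the use of the strict positivity hypothesis; granting this, the rest is the textbook scalar maximum principle and presents no further obstacle.
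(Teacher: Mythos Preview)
Your argument is correct and follows the same route as the paper's proof: the evolution equation from Theorem~\ref{t:scalarcurvmon}, nonnegativity of $\brs{\grc}^2_{\gm}$ from strict positivity, strict ellipticity of $\gD_{\gm,\gs}$, and the scalar maximum principle. Your observation that strict positivity persists along the flow (because it is a signature condition on the fixed pairing $\langle\slot,\slot\rangle$ once $n_+=\dim M$ is forced by the generalized metric condition) is a useful clarification that the paper leaves implicit.
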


\section{Short-time existence and uniqueness}

In this section we turn to establishing analytic properties of the generalized Ricci flow.  For generalized metrics as in Definition \ref{d:genmetric}, we will see that the generalized Ricci flow is a degenerate parabolic equation, with degeneracy entirely arising from the invariance of the equation under Courant automorphisms.  Through an application of DeTurck's gauge fixing method we will establish the existence.  We emphasize that the results of this section apply to generalized metrics, and do not require strict positivity.

\subsection{Generalized Bianchi operator}

Recall from Proposition \ref{prop:variation} that
the linearization of the operator \[\mathcal \grc \colon \ms M^+ \times \ms H^* \to \End(E)\] has principal symbol
\begin{align*}
    P(L \grc)(\chi, \ge) = - \tfrac{1}{2}\gD \chi - \tfrac12 \mc L_{\on{Tr} D \chi+\gm \varepsilon}\gm.
\end{align*}
Thus the Ricci operator is elliptic modulo the image of the Courant automorphism group.  To further prepare for the proof of short-time existence we define an explicit differential operator which deals with the piece of the linearization tangent to the gauge orbit.

\begin{defn} Given $\gm, \divg$, fix $D \in LC(\gm, \divg)$ and $\til{D}$ an arbitrary Courant algebroid connection.  For this data define the \emph{Bianchi operator} $\BB\in \Gamma(E^*)\cong \Gamma(E)$ by
\begin{align*}
\BB_\gamma:=\gm^{\alpha\beta}A_{\ga \gg \gb},
\end{align*}
where $A = D - \til{D}$, noting (\ref{f:connectionconvention}) for the index ordering of $A$.
\end{defn}

\begin{lemma} \label{l:Wdef}
    For any $u\in\Gamma(E)$ we have
    \[\langle \BB,u\rangle= - \div\gm \!u+\on{Tr}(\gm \tilde Du),\]
    where $\on{Tr}(\gm \tilde Du) = \gm^{\ga \gb} \til{D}_{\ga} u_{\gb}$.
In particular, $\BB$ does not depend on the representative $D\in LC(\gm,\div)$.
\end{lemma}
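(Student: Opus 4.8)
The plan is to unwind the definition of $\BB$ directly from $\BB_\gamma = \gm^{\alpha\beta} A_{\alpha\gamma\beta}$, using the index convention (\ref{f:connectionconvention}) and the antisymmetry of $A = D - \til D$ in its last two slots, and then to recognise the two resulting terms as $-\div\gm u$ and $\on{Tr}(\gm\til D u)$. The calculation is elementary; the only points requiring attention are the placement of indices forced by (\ref{f:connectionconvention}) and the fact that $\til D$ is only assumed to be a Courant algebroid connection, so that compatibility with the pairing $\langle\cdot,\cdot\rangle$ is all that is available (and, as it turns out, all that is needed).

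First I would record that, since both $D$ and $\til D$ preserve $\langle\cdot,\cdot\rangle$, their difference tensor satisfies $\langle A_u v, w\rangle = -\langle v, A_u w\rangle$ for all $u,v,w\in\Gamma(E)$; in the notation (\ref{f:connectionconvention}) this reads $A_{\alpha\beta\gamma} = -A_{\alpha\gamma\beta}$, i.e.\ $A$ is a section of the bundle $E\otimes\w2 E$ over which Courant algebroid connections form an affine space (cf.\ \S\ref{subsec:cacon}). Hence $\BB_\gamma = \gm^{\alpha\beta} A_{\alpha\gamma\beta} = -\gm^{\alpha\beta} A_{\alpha\beta\gamma}$, and contracting with $u^\gamma$ while using $A_{\alpha\beta\gamma} u^\gamma = \langle e_\beta, A_{e_\alpha} u\rangle$ gives $\langle\BB, u\rangle = -\gm^{\alpha\beta}\langle e_\beta,(D_{e_\alpha} - \til D_{e_\alpha})u\rangle = -\gm^{\alpha\beta}(D_\alpha u_\beta - \til D_\alpha u_\beta)$.

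Next I would identify $\gm^{\alpha\beta} D_\alpha u_\beta$ with $\div\gm u$: since $D\in LC(\gm,\div)$ is compatible with $\gm$ (so $D\gm = 0$) and preserves $\langle\cdot,\cdot\rangle$, one computes $\div_D(\gm u) = D_\alpha(\gm^\alpha{}_\beta u^\beta) = \gm^\alpha{}_\beta D_\alpha u^\beta = \gm^{\alpha\beta} D_\alpha u_\beta$, and $\div_D = \div$ by hypothesis. Substituting and recalling $\on{Tr}(\gm\til D u) = \gm^{\alpha\beta}\til D_\alpha u_\beta$ then yields $\langle\BB, u\rangle = -\div\gm u + \on{Tr}(\gm\til D u)$, as claimed. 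For the last assertion, note that the right-hand side of this identity refers only to $\gm$, $\div$, $\til D$ and $u$, with no reference to the chosen representative $D$; since $\langle\cdot,\cdot\rangle$ is fibrewise non-degenerate and the identity holds for every $u\in\Gamma(E)$, it pins down $\BB$ uniquely, so $\BB$ is independent of the choice of $D\in LC(\gm,\div)$. I do not anticipate any genuine obstacle here — the whole argument is bookkeeping — so the one thing to be careful about is matching the antisymmetry of $A$ to the correct pair of slots in (\ref{f:connectionconvention}), and keeping in mind that only pairing-compatibility of $\til D$ is used.
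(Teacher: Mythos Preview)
Your proof is correct and follows essentially the same route as the paper's own argument: unwind the definition $\BB_\gamma u^\gamma = \gm^{\alpha\beta}A_{\alpha\gamma\beta}u^\gamma$, use the antisymmetry $A_{\alpha\gamma\beta}=-A_{\alpha\beta\gamma}$ coming from pairing-compatibility, and then recognise $\gm^{\alpha\beta}D_\alpha u_\beta = D_\alpha(\gm^{\alpha\beta}u_\beta)=\div\gm u$. The paper compresses all of this into a single line, while you spell out the antisymmetry step and the identification $\div_D=\div$, but there is no substantive difference in approach.
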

\begin{proof} We compute
    $\BB_\gamma u^\gamma= - \gm^{\alpha\beta}D_\alpha u_\beta+\gm^{\alpha\beta}\tilde D_\alpha u_\beta= - D_\alpha (\gm^{\alpha\beta}u_\beta)+\gm^{\alpha\beta}\tilde D_\alpha u_\beta$.
\end{proof}
Due to Lemma \ref{l:Wdef} we will thus write
\[\BB(\gm,\div,\tilde D).\]  The key point is that the principal symbol of $\BB$ is precisely the missing piece of the symbol of $\grc$.  In the statement of the lemma below we allow for a general variation $\varepsilon
$, of the divergence operator.   Although the result is zeroth-order in $\varepsilon$, we will treat this as a first order operator as in context we will always have $\varepsilon = \differential \psi$ for some smooth function $\psi$ (cf.\ Lemma \ref{lem:vary_divergence}).

\begin{nota}
    To simplify notation, we will employ the symbol $\equiv$ to denote equality up to terms of lower order in derivatives of $\chi$ and $\varepsilon$ (treating $\varepsilon$ as already containing one derivative, in accordance with the above remark).
\end{nota}

\begin{lemma} \label{l:Wlinearization} 
Given $(\gm, \divg)$, $D\in LC(\gm,\div)$, and a fixed background connection $\tilde D$, we have
\begin{align*}
    P [L {\BB}(\gm,\divg,\tilde D)](\chi, \varepsilon) = - \on{Tr} D \chi - \gm \varepsilon.
\end{align*}
\begin{proof} 
    From formulas \eqref{eq:variation_trace} we compute
    \begin{align*}
        \frac{\del}{\del s} \BB_c &\equiv \gm^{\alpha\beta}A_{\alpha c \beta }=A\indices{^a_{ca}}-A\indices{^{\hat a}_{c \hat a}}=- (\tfrac12D^{\hat a}\chi_{c\hat a}+\varepsilon_c)-\tfrac12D^{\hat a}\chi_{c\hat a}=-D^{\hat a}\chi_{c\hat a}-\varepsilon_c,\\
        \frac{\del}{\del s} \BB_{\hat c} &\equiv \gm^{\alpha\beta}A_{\alpha \hat c \beta}=A\indices{^a_{\hat c a}}-A\indices{^{\hat a}_{\hat c \hat a}}=- \tfrac12D^{a}\chi_{\hat c a}-(\tfrac12D^{a}\chi_{\hat c a}-\varepsilon_{\hat c})=-D^{a}\chi_{\hat c a}+\varepsilon_{\hat c}.\qedhere
    \end{align*}
\end{proof}
\end{lemma}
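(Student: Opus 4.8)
The plan is to linearize $\BB(\gm,\divg,\tilde D)$ by a direct computation, retaining only the terms that carry a derivative of $\chi$ (and, per the $\equiv$ convention fixed just above, treating $\varepsilon$ as already of order one). Write $\BB_\gamma = \gm^{\alpha\beta}A_{\alpha\gamma\beta}$ with $A = D - \tilde D$, $D \in LC(\gm,\divg)$, and $\tilde D$ fixed. Since by Lemma \ref{l:Wdef} the operator $\BB$ does not depend on the representative $D \in LC(\gm,\divg)$, I am free to compute its variation along any smooth family $D_s \in LC(\gm_s,\divg_s)$; the convenient choice is the canonical family attached to $(\gm_s,\divg_s)$ by Proposition \ref{prop:defconnection}, whose $s$-derivative $\tdels D$ is first order in $\chi$ and zeroth order in $\varepsilon$, and whose components and relevant traces are recorded in Lemma \ref{lem:eplicit} and in formulas \eqref{eq:variation_trace}.

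First I would differentiate, obtaining $\tdels \BB_\gamma = (\tdels \gm^{\alpha\beta}) A_{\alpha\gamma\beta} + \gm^{\alpha\beta}(\tdels A)_{\alpha\gamma\beta}$. The first summand carries no derivative of $\chi$, since the difference tensor $A$ is algebraic in the connections, hence it is absorbed into $\equiv$; moreover $\tdels A = \tdels D$ because $\tilde D$ does not depend on $s$. Thus $\tdels \BB_\gamma \equiv \gm^{\alpha\beta}(\tdels D)_{\alpha\gamma\beta}$, and everything is reduced to the explicit family from Proposition \ref{prop:defconnection}.

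Next I would split this $\gm$-contraction into its $V_+$ and $V_-$ blocks, using $\gm^{ab} = \eta^{ab}$, $\gm^{\hat a\hat b} = -\eta^{\hat a\hat b}$, $\gm^{a\hat b} = 0$, which expresses $\tdels\BB_c$ and $\tdels\BB_{\hat c}$ as differences of two traces of $\tdels D$. Substituting the trace formulas \eqref{eq:variation_trace} then yields $\tdels\BB_c \equiv -D^{\hat a}\chi_{c\hat a} - \varepsilon_c$ and $\tdels\BB_{\hat c} \equiv -D^{a}\chi_{\hat c a} + \varepsilon_{\hat c}$. To finish I would reassemble: from $(\on{Tr} D\chi)^\beta = D_\alpha\chi^{\alpha\beta}$ together with $\chi_{ab} = \chi_{\hat a\hat b} = 0$ one gets $(\on{Tr} D\chi)_c = D^{\hat a}\chi_{c\hat a}$ and $(\on{Tr} D\chi)_{\hat c} = D^{a}\chi_{\hat c a}$, while $(\gm\varepsilon)_c = \varepsilon_c$ and $(\gm\varepsilon)_{\hat c} = -\varepsilon_{\hat c}$; the two blocks therefore combine to $\tdels\BB \equiv -\on{Tr} D\chi - \gm\varepsilon$, which is precisely the claimed formula for $P[L\BB(\gm,\divg,\tilde D)](\chi,\varepsilon)$.

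The argument is essentially bookkeeping, and I do not expect a genuine obstacle. The one point that needs care is the sign accounting when $\gm$ is decomposed into its $\pm 1$ eigenbundle blocks, so that the separate contributions $\tdels\BB_c$ and $\tdels\BB_{\hat c}$ really do recombine into $-(\on{Tr} D\chi + \gm\varepsilon)$ rather than a sign-twisted variant. It is also worth noting explicitly at the outset that working with the \emph{canonical} family $D_s$ of Proposition \ref{prop:defconnection}, rather than an arbitrary Levi-Civita family along $(\gm_s,\divg_s)$, is legitimate precisely because of the representative-independence established in Lemma \ref{l:Wdef}.
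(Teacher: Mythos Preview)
Your proposal is correct and follows essentially the same route as the paper's proof: both compute $\tdels\BB_\gamma \equiv \gm^{\alpha\beta}(\tdels D)_{\alpha\gamma\beta}$ using the canonical family of Proposition \ref{prop:defconnection}, split into $V_\pm$ components, and substitute the trace formulas \eqref{eq:variation_trace}. You spell out a few justifications the paper leaves implicit---the representative-independence from Lemma \ref{l:Wdef} legitimizing the choice of family, the discarding of the lower-order $(\tdels\gm^{\alpha\beta})(D-\tilde D)_{\alpha\gamma\beta}$ term, and the reassembly of the block components into $-\on{Tr} D\chi - \gm\varepsilon$---but the underlying computation is identical.
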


\begin{ex}
    Let $E$ be an exact Courant algebroid with a family $\gm_s$ of generalized metrics and a family of dilatons $\varphi_s$. Fix the (unique) identification $E\cong TM\oplus T^*M$ for which $\gm_0$ is the graph of a symmetric form. Pick a frame $E_i$ of $TM$. Denoting the Levi-Civita connection for $g_0$ by $\nabla$, we then have at $s=0$:
    \[\dels \BB_k\equiv - \nabla^i\tdels B_{ik},\qquad \dels \BB^k\equiv \nabla^ig^{jk}\tdels g_{ij}-\tfrac12\nabla^k\on{tr}_g\tdels g +2\nabla^k\tdels\varphi.\]
\end{ex}

\subsection{Dilaton operator}

Our existence theory for generalized Ricci flow works using the equivalent data of a generalized metric and the associated dilaton.  In Proposition \ref{prop:dilaton_transitive} we derived the evolution equation for the dilaton, a delicate operator involving the inverse of the anchor map, and the Ricci and scalar curvatures.  Here we derive the crucial delicate point that the symbol of the linearization of this operator in the generalized metric and dilaton is simply the Laplace operator acting on the linearization of the dilaton.  In particular, there are no second order terms in the linearization of the generalized metric, which renders the gauge-fixed generalized Ricci flow upper-triangular and hence elliptic.

\begin{defn} \label{d:dilatonoperator}
  Define the \emph{dilaton operator} by
  \begin{align*}
    \dil&\colon \ms M^+ \times \ms H^* \to C^{\infty}(M)\\
    \dil_{\gm, \gs} &= \tfrac12\on{Tr}(\rho\grc_{\gm,{\gs}}\hat\rho)+\tfrac12\gr_{\gm,{\gs}}.
  \end{align*}
\end{defn}

\begin{lemma} \label{l:dillinearization} The linearization of the operator $\dil$ has principal symbol
\begin{align*}
    P[L \dil_{\gm,\gs}](\chi, \psi) = \gD \psi.
\end{align*}
\end{lemma}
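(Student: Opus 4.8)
The strategy is to track which terms in the linearization of $\dil_{\gm,\gs} = \tfrac12\on{Tr}(\rho\,\grc\,\hat\rho) + \tfrac12\gr$ carry two derivatives of the data $(\chi,\psi)$, where we recall $\chi = \dels\gm$ and (via Definition \ref{d:inducedmetricdilaton}) the variation of the dilaton is $\psi = \dels\varphi$; by Lemma \ref{lem:vary_divergence} the associated variation of the divergence is $\varepsilon = \dels\div_\sigma = 2\differential(\nu/\sigma)$, and one checks from $\sigma = e^{-\varphi}\sigma_g$ that the highest-order-in-$\chi$ part of $\varepsilon$ is $-\tfrac12 \differential(\tr_g \dels g)$ plus $-2\differential\psi$-type terms. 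First I would treat the term $\tfrac12\dels\on{Tr}(\rho\,\grc\,\hat\rho)$. Since $\dels\hat\rho = \tfrac12(1-\hat\rho\rho)\chi\hat\rho$ by Lemma \ref{l:rhohatvariation} carries no derivatives of $\chi$, the only second-order contribution here comes from $\rho\,(\dels\grc)\,\hat\rho$. Using the variational formula \eqref{eq:grc_var}, the second-order-in-$(\chi,\varepsilon)$ part of $\dels\grc$ is $-\tfrac12\Delta\chi - \tfrac12\mc L_{\on{Tr}D\chi + \gm\varepsilon}\gm$ (the operators $\mc A(\chi)$, $\chi\gm\grc$, $\gm[\chi,\fgrc]$ being lower order). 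Contracting with $\rho$ and $\hat\rho$ and taking the trace over $V_+$, I expect the $-\tfrac12\Delta\chi$ piece to contribute a term like $-\tfrac12\,\tr_{V_+}\!\big(\text{pr}_{V_+}\Delta\chi|_{V_+\otimes V_+}\big)$; but since $\chi$ anticommutes with $\gm$ its $V_+\otimes V_+$ block vanishes (Definition \ref{d:Mtangent}), so $\Delta\chi$ has no $V_+\otimes V_+$ component, and this term drops out. The Lie-derivative piece $-\tfrac12\mc L_{\on{Tr}D\chi + \gm\varepsilon}\gm$ maps $V_+$ to $V_-$ by Proposition \ref{prop:killing}, so it also has vanishing $V_+\otimes V_+$ block and contributes nothing to the trace. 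Hence $\tfrac12\dels\on{Tr}(\rho\,\grc\,\hat\rho) \equiv 0$ modulo lower order, where here "lower order" means at most first order in $\chi$; care is needed because $\varepsilon$ itself contains $\differential\psi$, and I must check that no surviving first-order-in-$\varepsilon$ term actually produces a $\Delta\psi$.

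Next I would handle $\tfrac12\dels\gr$, using the scalar variation formula from Proposition \ref{prop:variation}:
\[
\dels\gr = \gm\indices{^\alpha_\gamma}\gm\indices{^\beta_\delta} D_\alpha D_\beta\chi^{\gamma\delta} - 2\gm^{\alpha\beta}D_\alpha\varepsilon_\beta + \tfrac12\chi^{\alpha\beta}\grc_{\alpha\beta}.
\]
The term $\gm\indices{^\alpha_\gamma}\gm\indices{^\beta_\delta}D_\alpha D_\beta\chi^{\gamma\delta}$ is second order in $\chi$ but, since it is built from $\chi$ and not $\psi$, it contributes nothing to the $\psi$-symbol; it will reappear and must cancel against contributions elsewhere — but note that in the \emph{dilaton} operator this $\chi$-second-order scalar term has no partner to cancel it, which is exactly why the statement claims $P[L\dil] = \Delta\psi$ with \emph{no} $\chi$ term, so I must verify that this term is genuinely absent from $\dil$'s linearization. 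Resolving this is the crux: I claim it cancels against the $-2\gm^{\alpha\beta}D_\alpha\varepsilon_\beta$ term together with the $\Delta\chi$-trace contribution I discarded above being more subtle than stated. More carefully: the whole point of the mixed-signature inner product (Remark \ref{r:innerproduct}) and of the coupled dilaton flow is that in $\dil = \tfrac12\on{Tr}(\rho\grc\hat\rho) + \tfrac12\gr$ the second-order-in-$\chi$ contributions from the two summands cancel. So I would compute the $V_+\otimes V_+$ trace of $\dels\grc$ more honestly: while the $-\tfrac12\Delta\chi$ block in $V_+\otimes V_+$ vanishes identically, the formula for $\dels\grc_{ac}$ from the proof of Proposition \ref{prop:variation} shows $\dels\grc_{ac} = -\chi_{\hat a(a}\grc\indices{_{c)}^{\hat a}}$, which is zeroth order in $\chi$ — so indeed $\tfrac12\on{Tr}(\rho\grc\hat\rho)$ is lower order. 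Then the claim reduces to: the second-order part of $\tfrac12\dels\gr$ equals $\Delta\psi$. Now $\varepsilon_\beta \equiv -2(\differential\psi)_\beta + (\text{terms}\ \sim \differential\tr\chi)$; plugging in, $-\tfrac12\cdot 2\gm^{\alpha\beta}D_\alpha\varepsilon_\beta$ contains $2\gm^{\alpha\beta}D_\alpha(\differential\psi)_\beta = 2\Delta\psi$ by Definition \ref{d:Laplace} (recall $\Delta f = \gm^{\alpha\beta}D_\alpha D_\beta f$ on functions), giving the factor to match, while I must check that the $\differential\tr\chi$ part of $\varepsilon$ together with the $\gm\gm D D\chi$ term of $\dels\gr$ reorganize into something at most first order in $\chi$ — this is precisely the statement that the dimensional-regularisation constants in Definition \ref{d:L2metric} were chosen to kill the second-order-in-$\chi$ piece of the dilaton evolution (Remark \ref{r:innerproduct}).

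The main obstacle, then, is bookkeeping the competition between $\gm\indices{^\alpha_\gamma}\gm\indices{^\beta_\delta}D_\alpha D_\beta\chi^{\gamma\delta}$ (second order in $\chi$, appearing in $\dels\gr$) and the $D\varepsilon$ term with $\varepsilon$'s $\tr\chi$-dependence (also second order in $\chi$), and showing they cancel, leaving only $2\Delta\psi$ from the genuine $\differential\psi$-part of $\varepsilon$, hence $P[L\dil](\chi,\psi) = \tfrac12\cdot 2\Delta\psi = \Delta\psi$. Concretely I would: (i) write $\dels g = -g(\chi\cdot,\cdot)$-type formula from Definition \ref{d:inducedmetricdilaton} to get $\tr_g\dels g$ in terms of $\chi$; (ii) substitute into Lemma \ref{lem:vary_divergence}'s expression for $\varepsilon$; (iii) expand $\dels\gr$ via Proposition \ref{prop:variation}; (iv) collect all terms with two derivatives, observing the $\chi$-quadratic-derivative terms cancel and only $\Delta\psi$ survives; (v) observe $\dels\on{Tr}(\rho\grc\hat\rho)$ is at most first order by the computations above. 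I expect step (iv) — verifying the precise cancellation of the two second-order-in-$\chi$ scalar contributions — to be where all the work lies, and where the specific normalization constants are essential.
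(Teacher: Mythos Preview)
Your proposal contains a genuine gap in the treatment of $\tfrac12\dels\on{Tr}(\rho\,\grc\,\hat\rho)$. You claim this trace picks out only the $V_+\otimes V_+$ block $(\dels\grc)_{ac}$, and since that block is zeroth order in $\chi$ you conclude the whole term is lower order. But the trace is over $TM$, and $\rho|_{V_-}$ is not zero: writing the trace in components,
\[
\on{Tr}(\rho\,\grc\,\hat\rho)=\rho\indices{^i_\alpha}\,\grc\indices{^\alpha_\beta}\,\hat\rho\indices{^\beta_i}
= \rho\indices{^i_b}\,\grc\indices{^b_a}\,\hat\rho\indices{^a_i}
+ \rho\indices{^i_{\hat b}}\,\grc\indices{^{\hat b}_a}\,\hat\rho\indices{^a_i},
\]
so the mixed block $\grc_{\hat b a}$ contributes via $\rho\indices{^i_{\hat b}}\hat\rho\indices{^a_i}$. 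After variation, $(\dels\grc)_{\hat a a}$ carries the full second-order piece $-\tfrac12\Delta\chi - \tfrac12\mc L_{\on{Tr}D\chi+\gm\varepsilon}\gm$, and this does \emph{not} drop out of the trace. The paper's proof makes this explicit: its leading-order expansion begins with $\tfrac12\rho^{i\hat a}\hat\rho\indices{_i^a}\,\dels\grc_{\hat a a}$, precisely the contribution you discard.

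A concrete symptom: your own $\psi$-count from $\tfrac12\dels\gr$ alone gives
\[
\tfrac12\cdot(-2)\gm^{\alpha\beta}D_\alpha\varepsilon_\beta
\;\equiv\; \tfrac12\cdot(-2)\cdot(-2)\,\gm^{\alpha\beta}D_\alpha D_\beta\psi
\;=\;2\Delta\psi,
\]
not $\Delta\psi$. The missing $-\Delta\psi$ comes exactly from the $-2D_{(a}\varepsilon_{\hat a)}$ term inside $\dels\grc_{a\hat a}$, contracted with $\rho^{i\hat a}\hat\rho\indices{_i^a}$. Likewise, the second-order-in-$\chi$ cancellation you correctly anticipate in step (iv) is not internal to $\dels\gr$ at all: it happens between $\tfrac12\dels\gr$ and the mixed-block trace term, using the anchor identities $\rho^{i\hat a}\hat\rho\indices{_i^a}\rho\indices{^j_{\hat a}}=-\rho^{ja}$ and $\rho^{i\hat a}\hat\rho\indices{_i^a}\rho\indices{^j_a}=\rho^{j\hat a}$ (which follow from $\rho\circ\rho^*=0$ and $\rho|_{V_+}\circ\hat\rho=\Id$). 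Your plan (i)--(v) is salvageable, but you must keep the $\rho^{i\hat a}\hat\rho\indices{_i^a}\,\dels\grc_{\hat a a}$ contribution throughout and verify the cancellation across both summands of $\dil$, as the paper does.
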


\begin{proof} We fix a one-parameter family $(\gm_s, \gs_s)$ of metrics and half-densities with an associated family of dilatons $\varphi_s$.
  Using Lemma \ref{lem:vary_divergence} and the proof of Proposition \ref{prop:dilaton_transitive}, the corresponding variation $\varepsilon$ of the divergence is, up to lower order terms in $\chi$ and $\psi$,
  \[\varepsilon_\alpha=2D_\alpha\!\left(\sigma^{-1} \tdels \sigma\right)=-2D_\alpha \psi-\tfrac12D_\alpha(\on{Tr}\rho\chi\hat\rho)\equiv -2\rho(e_\alpha)\psi-\tfrac12\rho^{i\hat a}\hat\rho\indices{_i^a}\rho(e_\alpha)\chi_{a\hat a}.\]
  Since $\rho\circ \rho^*=0$, $\rho\indices{^i_c}\hat\rho\indices{_i^a}=\delta^a_c$, and $\hat\rho\indices{_i^a}\rho\indices{^j_{a}}=\delta^j_i$, we have
  \[\rho^{i\hat a}\hat\rho\indices{_i^a}\rho\indices{^j_{\hat a}}=\rho^{i\alpha}\hat\rho\indices{_i^a}\rho\indices{^j_{\alpha}}-\rho^{ic}\hat\rho\indices{_i^a}\rho\indices{^j_{c}}=-\rho^{ja}, \qquad \rho^{i\hat a}\hat\rho\indices{_i^a}\rho\indices{^j_{a}}=\rho^{j\hat a}.\]
  We can then directly calculate, up to lower order terms in $\chi$ and $\psi$,
  \begin{align*}
    \tdels \dil \equiv&\ \tfrac12\rho^{i\hat a}\hat \rho\indices{_i^a}\tdels \grc_{\hat aa}+\tfrac12 \tdels\gr\\
    \equiv&\ \tfrac12\rho^{i\hat a}\hat \rho\indices{_i^a}(-\tfrac12\gm^{\alpha\beta}\rho(e_\alpha)\rho(e_\beta)\chi_{a\hat a}+\rho(e_c)\rho(e_a)\chi\indices{^c_{\hat a}}-\rho(e_{\hat c})\rho(e_{\hat a})\chi\indices{_a^{\hat c}}\\
    &\quad-\rho(e_a)\varepsilon_{\hat a}-\rho(e_{\hat a})\varepsilon_a)+\tfrac12(-\rho(e_\alpha)\rho(e_\beta)\chi^{\alpha\beta}-2\gm^{\alpha\beta}\rho(e_\alpha)\varepsilon_\beta)\\
    \equiv&\ \tfrac12\rho^{i\hat a}\hat \rho\indices{_i^a}\rho(e_c)\rho(e_a)\chi\indices{^c_{\hat a}}-\tfrac12\rho^{i\hat a}\hat \rho\indices{_i^a}\rho(e_{\hat c})\rho(e_{\hat a})\chi\indices{_a^{\hat c}}+\rho^{i\hat a}\hat \rho\indices{_i^a}\rho(e_a)[\rho(e_{\hat a})\psi+\tfrac14\rho^{i\hat c}\hat\rho\indices{_i^c}\rho(e_{\hat a})\chi_{c\hat c}]\\
    &\quad+\rho^{i\hat a}\hat \rho\indices{_i^a}\rho(e_{\hat a})[\rho(e_{a})\psi+\tfrac14\rho^{i\hat c}\hat\rho\indices{_i^c}\rho(e_{a})\chi_{c\hat c}]-\tfrac12\rho(e_\alpha)\rho(e_\beta)\chi^{\alpha\beta}+2\gm^{\alpha\beta}\rho(e_\alpha)\rho(e_\beta)\psi\\
    &\quad+\tfrac14\rho^{i\hat a}\hat \rho\indices{_i^a}\gm^{\alpha\beta}\rho(e_\alpha)\rho(e_\beta)\chi_{a\hat a}\\
    \equiv&\ \tfrac12\rho(e_c)\rho(e^{\hat a})\chi\indices{^c_{\hat a}}+\tfrac12\rho(e_{\hat c})\rho(e^a)\chi\indices{_a^{\hat c}}-\rho(e_a)[\rho(e^a)\psi+\tfrac14\rho^{i\hat c}\hat\rho\indices{_i^c}\rho(e^a)\chi_{c\hat c}]\\
    &\quad+\rho(e_{\hat a})[\rho(e^{\hat a})\psi+\tfrac14\rho^{i\hat c}\hat\rho\indices{_i^c}\rho(e^{\hat a})\chi_{c\hat c}]-\tfrac12\rho(e_\alpha)\rho(e_\beta)\chi^{\alpha\beta}+2\gm^{\alpha\beta}\rho(e_\alpha)\rho(e_\beta)\psi\\
    &\quad+\tfrac14\rho^{i\hat a}\hat \rho\indices{_i^a}\gm^{\alpha\beta}\rho(e_\alpha)\rho(e_\beta)\chi_{a\hat a}\\
    \equiv&\ \gm^{\alpha\beta}\rho(e_\alpha)\rho(e_\beta)\psi \equiv \gD \psi,
  \end{align*}
  as claimed.
\end{proof}

\subsection{Existence}

We are now ready to give the proof of short-time existence of solutions to generalized Ricci flow.  The main idea is conceptually identical to the well-known DeTurck gauge-fixing argument \cite{DeTurck1}.  In particular, we construct a differential operator given by the generalized Ricci operator plus the generalized Lie derivative with respect to the Bianchi operator $\BB$.  As explained in the previous subsection, there is a further subtlety where we are forced to make a change of variables and understand the equation through the equivalent data of the generalized metric and the dilaton. The gauge-modified operator on this data is strictly elliptic, thus by standard arguments we produce a short-time solution to the gauge-modified flow.  Pulling everything back by the relevant family of Courant automorphisms we obtain the required solution to generalized Ricci flow.

\begin{prop} \label{p:ste} Given $M$ a compact manifold and $E \to M$ a Courant algebroid, fix a generalized metric $\gm$ and a half-density $\gs\in\ms H^*$.  Then there exists $\ge > 0$ and a solution to generalized Ricci flow with initial data $(\gm, \gs)$ on $[0, \ge)$.
\begin{proof} Fix an auxiliary background connection $\til{D}$.   Using this we define an operator
\begin{align*}
    {F} &\colon \ms M^+\times C^{\infty}(M) \to \on{End}(E)\times C^{\infty}(M)\\
    {F} &(\gm, \varphi) := \left( -2 \grc_{\gm, \gs} + \LL_{\mathcal B(\gm, \gs, \til{D})} \gm, \;\mathcal{Dil}_{\gm,\gs} + \LL_{\mathcal B(\gm, \gs, \til{D})} \varphi \right),
\end{align*}
where as usual $\gs = e^{-\varphi} \gs_g$.  By the results of Proposition \ref{prop:variation}, and Lemmas \ref{l:dillinearization} and \ref{l:Wlinearization}, it follows directly that the principal symbol of the linearization of ${F}$ is
\begin{align*}
    P[ L {F}(\gm, \varphi)](\chi, \psi) = \left(\begin{matrix}
        \Delta \chi & \star \\
        0 & \Delta \psi
    \end{matrix} \right).
\end{align*}
Thus the operator $F$ is strictly elliptic by Proposition \ref{p:Laplacianelliptic}.  We pause to emphasize that the linearization of the generalized Ricci tensor does have a generic second order term involving the linearization of the dilaton.  Thus the result of Lemma \ref{l:dillinearization}, showing that there is no second order term in the linearization of the metric, is crucial to obtaining that this system is elliptic with upper-triangular symbol as shown.  It follows from the standard theory of parabolic equations on compact manifolds that for given $\gm, \varphi$, there exists $\ge > 0$ and a solution to
\begin{align} \label{f:deturckflow}
    \dt (\gm, \varphi) = F(\gm, \varphi)
\end{align}
on $[0,\ge)$, yielding an associated solution to gauge-fixed generalized Ricci flow $(\gm_t, \gs_t)$.  Now it follows from standard ODE results that we can solve for the one-parameter family of Courant automorphisms (cf.\ Proposition \ref{prop:sections_automorphisms})
\begin{align} \label{f:DeTurckODE}
    \dt \Phi &= - Z_{\mathcal B(\gm, \gs, \til{D})}, \qquad \Phi_0 = \Id,
\end{align}
and by a standard computation using the naturality of Ricci and scalar curvatures under Courant automorphisms it follows that $(\Phi_t^* \gm_t, \Phi_t^* \gs_t)$ is the required solution to generalized Ricci flow.
\end{proof}
\end{prop}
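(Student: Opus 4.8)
The plan is to follow DeTurck's gauge-fixing strategy, adapted to the Courant-algebroid setting. The generalized Ricci flow is degenerate parabolic: by Proposition \ref{prop:variation} the principal symbol of the linearization of $\grc$ is $-\tfrac12\gD\chi - \tfrac12\mc L_{\on{Tr}D\chi + \gm\varepsilon}\gm$, and the generalized Lie derivative term degenerates the operator along the orbit of the Courant automorphism group. To cancel this term I would introduce the Bianchi operator $\BB(\gm,\divg,\til{D})$ of Lemma \ref{l:Wdef}, attached to a fixed auxiliary background connection $\til{D}$, whose linearization has principal symbol $-\on{Tr}D\chi - \gm\varepsilon$ by Lemma \ref{l:Wlinearization} --- precisely the piece of the $\grc$-symbol responsible for the degeneracy.

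The key complication absent from the classical case is that the natural second flow variable is the half-density $\gs$, whose evolution is governed by $\gr_{\gm,\gs}$, and the resulting coupled symbol matrix is not obviously triangular. I would resolve this by passing to the equivalent data $(\gm,\varphi)$, where $\varphi$ is the dilaton determined by $\gs = e^{-\varphi}\gs_g$ via the induced Riemannian metric $g$ of $\gm$ (Definition \ref{d:inducedmetricdilaton}). The evolution of $\varphi$ is then the dilaton operator $\dil_{\gm,\gs}$ of Definition \ref{d:dilatonoperator}, and the crucial input is Lemma \ref{l:dillinearization}: the linearization of $\dil$ has principal symbol $\gD\psi$, with \emph{no} second-order dependence on $\chi$. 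This is what makes the gauge-fixed system upper-triangular.

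Concretely, I would set
\[ F(\gm,\varphi) := \bigl( -2\grc_{\gm,\gs} + \LL_{\BB(\gm,\gs,\til{D})}\gm,\ \dil_{\gm,\gs} + \LL_{\BB(\gm,\gs,\til{D})}\varphi \bigr), \]
with $\gs = e^{-\varphi}\gs_g$ throughout, and compute, using Proposition \ref{prop:variation} together with Lemmas \ref{l:Wlinearization} and \ref{l:dillinearization}, that the symbol of $L F$ is the upper-triangular block matrix with diagonal entries $\gD\chi$ and $\gD\psi$. By Proposition \ref{p:Laplacianelliptic} the operator $\gD_{\gm,\gs}$ on tensors is elliptic, so $F$ is strictly elliptic, and the standard short-time existence theory for quasilinear parabolic systems on the compact manifold $M$ yields a solution of $\dt(\gm,\varphi) = F(\gm,\varphi)$ on some $[0,\ge)$, hence an associated solution $(\gm_t,\gs_t)$ of the gauge-fixed flow. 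I would then undo the gauge fixing: using Proposition \ref{prop:sections_automorphisms}, solve the ODE $\dt\Phi = -Z_{\BB(\gm_t,\gs_t,\til{D})}$ with $\Phi_0 = \Id$ for a one-parameter family of Courant automorphisms, and verify by a direct computation --- invoking naturality of $\grc$ and $\gr$ under $\Aut(E)$ and the chain rule for $\tdt(\Phi_t^*\cdot)$ --- that $(\Phi_t^*\gm_t,\Phi_t^*\gs_t)$ solves generalized Ricci flow with initial data $(\gm,\gs)$.

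The main obstacle I anticipate is not a single estimate but the bookkeeping around the change of variables: one must check that the dilaton really linearizes with symbol $\gD\psi$ and with no second-order contribution in $\chi$ (Lemma \ref{l:dillinearization}), since a stray second-order $\chi$-term would destroy the triangular structure and leave the gauge-fixed system merely degenerate parabolic; and one must be careful that the generalized Lie derivative terms added to $\grc$ and to $\dil$ are exactly the infinitesimal versions of pullback along the family $\Phi_t$, so that the final pullback step closes and reproduces the correct flow for both $\gm$ and $\gs$.
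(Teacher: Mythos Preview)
Your proposal is correct and follows essentially the same approach as the paper's proof: the DeTurck gauge-fixing via the Bianchi operator $\BB$, the change of variables from $\gs$ to the dilaton $\varphi$ to obtain an upper-triangular symbol, and the final pullback by the one-parameter family $\Phi_t$ solving $\dt\Phi = -Z_{\BB}$. You have also correctly identified the delicate point the paper emphasizes, namely that Lemma \ref{l:dillinearization} guarantees the absence of a second-order $\chi$-term in the dilaton linearization, without which the gauge-fixed system would fail to be elliptic.
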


\subsection{Uniqueness}

    In \cite{HamiltonSing} Hamilton gave a conceptual proof of the uniqueness of smooth solutions to Ricci flow on compact manifolds.  The idea is to recover the family of diffeomorphisms used to relate the Ricci--DeTurck flow to Ricci flow in the proof of existence a priori from a solution to Ricci flow.  These DeTurck diffeomorphisms, constructed as the solution of an ODE in terms of the solution of Ricci--DeTurck flow, become a solution to a PDE when expressed with respect to pullback data, i.e.\ the associated solution of Ricci flow.  Surprisingly, this PDE is precisely the harmonic map heat flow, and thus one can solve this parabolic flow on a Ricci flow background to construct the needed diffeomorphisms.

    We show the uniqueness of solutions to generalized Ricci flow by extending this circle of ideas to the Courant automorphism group.  The basic idea is the same: to express the family of Courant automorphisms relating the gauge-fixed and standard generalized Ricci flows, constructed as an ODE relative to the gauge-fixed data, as the solution of a certain parabolic PDE relative to the solution of generalized Ricci flow.  As we will see below in Lemma \ref{l:Hlinearization}, the relevant operator at the level of sections of $E$ is degenerate elliptic, evidenced by the presence of the term $\differential \divg u$ in the linearization.  Such a degeneracy is to be expected, due to the fact that the map $u \mapsto Z_u$ taking sections to infinitesimal automorphisms has an infinite-dimensional kernel containing exact sections $\differential f$, i.e.\ $Z_{\differential f} = 0$. (cf.\ Proposition \ref{prop:sections_automorphisms}).  However, the operator is elliptic in terms of the induced infinitesimal automorphisms $Z_u$.

    \begin{lemma} \label{l:Hlinearization} Given $\gm, \gs$ a choice of generalized metric and half-density, fix $D \in LC(\gm, \divg_{\gs})$ and $\til{D}$ a background Courant algebroid connection.  Define 
    \begin{align*}
        \mathcal H \colon& \Aut(E) \to \gG(E), \qquad \mathcal H(\Phi) = \BB(\Phi_* \gm, \Phi_* \gs, \til{D}).
    \end{align*}
    Then
    \begin{align*}
        P[L \mathcal H(\Id)](Z_u) &= - \gD u + \differential \divg_{\gs}(\gm u).
    \end{align*}
    \end{lemma}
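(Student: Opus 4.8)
The plan is to compute the linearization of $\mathcal H$ at $\Phi = \Id$ by composing two pieces: the infinitesimal action of a Courant automorphism generated by $Z_u$ on the pair $(\gm, \gs)$, and the linearization $L\BB$ of the Bianchi operator, whose symbol was already computed in Lemma \ref{l:Wlinearization}. First I would note that if $\Phi_s$ is the one-parameter family of automorphisms generated by $Z_u$ (via Proposition \ref{prop:sections_automorphisms}), then pulling back $(\gm, \gs)$ gives variations $\chi = \mc L_u \gm$ and $\nu = \mc L_u \gs = L_{\rho(u)} \gs$. By Lemma \ref{l:Wlinearization},
\begin{align*}
    P[L\mathcal H(\Id)](Z_u) = P[L\BB(\gm, \divg_{\gs}, \til D)](\chi, \varepsilon) = - \on{Tr} D\chi - \gm \varepsilon,
\end{align*}
where $\varepsilon = \dt \divg_{\gs}$ along this family, which by Lemma \ref{lem:vary_divergence} equals $2 \differential(\nu/\gs) = 2\differential( \sigma^{-1} L_{\rho(u)}\sigma) = \differential \divg_{\gs} u$ (using the Example defining $\div_\sigma$). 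So the second term is immediately $-\gm \differential \divg_\sigma u$; comparing with the claimed answer $-\gD u + \differential \divg_\sigma(\gm u)$, I will need $-\on{Tr} D \chi \equiv -\gD u$ modulo lower-order terms, and I will need to reconcile $-\gm \differential \divg_\sigma u$ with $+\differential \divg_\sigma(\gm u)$ — the latter should work because $\gm$ is parallel for $D$ and $\rho \circ \differential = 0$ makes these agree at the symbol level up to the sign convention, or more likely the $\gm$ simply commutes past at top order and the apparent sign discrepancy is absorbed; I would check this carefully against the conventions in \eqref{f:connectionconvention}.

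The main computation is the first term. Using Proposition \ref{prop:killing}, for $D$ Levi-Civita we have $(\mc L_u \gm)_{\alpha\beta} = 2(D_{[\alpha} u_{\gamma]} \gm\indices{^\gamma_\beta} + D_{[\beta} u_{\gamma]} \gm\indices{_\alpha{}^\gamma})$. Then $\on{Tr} D \chi$ has components $(\on{Tr} D\chi)^\beta = D^\alpha \chi_\alpha{}^\beta$, and substituting the Killing formula and differentiating gives terms like $D^\alpha D_\alpha u^\beta$ (the Laplacian $\gD u$ at top order), plus terms of the form $D^\alpha D^\beta u_\alpha$ and $\gm$-twisted versions thereof. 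The key point is that the $D^\alpha D^\beta u_\alpha$-type terms must cancel or be absorbed: one commutes derivatives via Lemma \ref{lem:torsion_on_functions} / the curvature identities (these produce lower-order, hence $\equiv 0$ terms), converting $D^\alpha D^\beta u_\alpha \equiv D^\beta D^\alpha u_\alpha = D^\beta \divg_D u = D^\beta \divg_\sigma u$, and the $\gm$-twisted analogue similarly becomes a divergence term. Combining, $-\on{Tr} D\chi \equiv -\gD u + (\text{a }\differential\divg\text{ term})$, and this extra divergence term should combine with the $-\gm\varepsilon$ contribution to yield exactly $\differential \divg_\sigma(\gm u)$. I would organize this by splitting $\chi$ into its $a\hat a$-components (where the formulas of Proposition \ref{prop:killing} simplify to $(\mc L_u\gm)_{a\hat a} = -4 D_{[a} u_{\hat a]}$) since $\chi_{ab} = \chi_{\hat a\hat b} = 0$, which is exactly how the computations in Lemma \ref{l:Wlinearization} and Lemma \ref{l:dillinearization} were structured.

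The main obstacle is bookkeeping the two types of divergence-like terms — those coming from $-\on{Tr} D\chi$ after commuting covariant derivatives, and those coming from $-\gm\varepsilon = -\gm \differential \divg_\sigma u$ — and checking they assemble into the single clean term $\differential \divg_\sigma(\gm u)$ with the correct sign. One has to be careful that $\gm\differential(\divg_\sigma u)$ versus $\differential(\divg_\sigma(\gm u))$ differ, since $\differential$ does not obviously commute with $\gm$; the resolution is that at the level of \emph{symbols} we only track the top-order behavior, and the relevant contractions with $\gm$ happen on indices that are differentiated, so the discrepancy is lower-order (or the two genuinely coincide because $D\gm = 0$ and $\rho\circ\differential = 0$ force $\gm\differential h = \differential h$ composed appropriately — this needs a line of checking). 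A secondary subtlety is ensuring that all the curvature terms generated by commuting $[D_\alpha, D_\beta]$ acting on $u$ are genuinely lower order in $u$, which follows since the generalized Riemann tensor is a zeroth-order coefficient; this is routine given Definition \ref{def:grm}. Once the divergence terms are matched, the statement follows by collecting the top-order pieces.
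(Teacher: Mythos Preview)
Your approach via the chain rule and Lemma \ref{l:Wlinearization} is a valid alternative to the paper's argument, but there are two concrete problems in the execution.

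First, a sign error: since $\mathcal H(\Phi)=\BB(\Phi_*\gm,\Phi_*\gs,\tilde D)$ involves the \emph{pushforward}, the infinitesimal variation along the flow of $Z_u$ is $\chi=-\mc L_u\gm$ and $\varepsilon=-\differential\div_\gs u$, not the pullback versions you wrote. This is why you encountered an ``apparent sign discrepancy'' between $-\gm\differential\div_\gs u$ and $+\differential\div_\gs(\gm u)$; the sign issue is real and originates here.

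Second, and more substantively, your expectation that $-\on{Tr}D\chi\equiv -\gD u$ is incorrect, and the worry about reconciling $\gm\differential(\cdot)$ with $\differential(\cdot)$ is a red herring. With the correct sign and $(\mc L_u\gm)_{a\hat a}=-4D_{[a}u_{\hat a]}$ one computes, at symbol level,
\[
(-\on{Tr}D\chi)_b \equiv 2D^{\hat a}D_{\hat a}u_b - 2D_b D^{\hat a}u_{\hat a},
\qquad
(-\gm\varepsilon)_b \equiv D_b(D^a u_a+D^{\hat a}u_{\hat a}),
\]
and similarly on the $\hat b$ component. These genuinely combine --- using that $D^\alpha D_\alpha$ has vanishing second-order symbol (Proposition \ref{p:Laplacianelliptic}), so $D^a D_a\equiv -D^{\hat a}D_{\hat a}$ --- to give $(-\gD u+\differential\div_\gs(\gm u))_b$. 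The point is that the ``extra'' $\differential\div$-type terms come from $-\on{Tr}D\chi$ itself, and one needs the symbol vanishing of $D^\alpha D_\alpha$; there is no step where $\gm\differential f$ and $\differential f$ are identified.

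By contrast, the paper does not pass through Lemma \ref{l:Wlinearization} at all. It observes that $\BB$ is built from the connection difference $D-\tilde D$, so the linearization is governed directly by $\mc L_u D$. A one-line computation gives $\langle e_\gamma,(\mc L_u D)_{e_\alpha}e_\beta\rangle\equiv \rho(e_\alpha)\rho(e_\beta)u_\gamma-(\gamma\leftrightarrow\beta)$, and contracting with $\gm^{\alpha\beta}$ immediately yields $-\gD u+\differential\div_\gs(\gm u)$. This bypasses all of the bookkeeping your approach requires.
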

    
    \begin{proof} As the Bianchi operator is equivalently expressed in terms of a choice of Levi-Civita connection we use the given choice $D$ and then observe that up to lower order derivatives in $u$ one has
            \[\langle e_\gamma,(\mc L_u D)_{e_\alpha}e_\beta\rangle\equiv -\langle e_\gamma,D_{e_\alpha}[u,e_\beta]\rangle\equiv -\langle e_\gamma,D_{e_\alpha}(\rho^*du_\beta-[e_\beta,u])\rangle\equiv \rho(e_\alpha)\rho(e_\beta)u_\gamma- (\gamma \leftrightarrow\beta).\]
Using this and the definitions of $\mathcal H$ and $\mathcal B$ we have
        \begin{align*}
            P[L \mathcal H(\Id)](Z_u)_\gamma &\equiv -\BB(\gm,\mc L_uD,\til{D})_\gamma\\
            &\equiv -\gm\indices{^\alpha^\beta}(\mc L_u D)\indices{_\alpha_\gamma_\beta}\\
            &\equiv \gm^{\alpha\beta}(\rho(e_\alpha)\rho(e_\gamma)u_\beta-\rho(e_\alpha)\rho(e_\beta)u_\gamma)\\
            &\equiv - (\Delta u)_\gamma+ (\differential \div_{\gs}  \gm u)_\gamma,
        \end{align*}
    as claimed.
    \end{proof}

\begin{prop} \label{p:uniqueness} Given $M$ a compact manifold and $E \to M$ a Courant algebroid, fix a generalized metric $\gm$ and a half-density $\gs\in\ms H^*$.  The solution to generalized Ricci flow constructed in Proposition \ref{p:ste} is unique.
    \begin{proof}
        Let $(\gm_t, \gs_t)$ be any solution to generalized Ricci flow with the given initial data.  We fix a background connection $\til{D}$, and our goal is to reconstruct the family of Courant automorphisms used in Proposition \ref{p:ste}, but only knowing a priori a given solution to generalized Ricci flow.  In particular, we claim that there exists $\ge > 0$ and a unique solution to
        \begin{align} \label{f:CAflow}
            \dt \Phi &= - Z_{\mathcal H(\Phi)}, \qquad \Phi_0 = \Id.
        \end{align}
        on $[0,\ge)$.  Assuming this claim for the moment, it follows by construction that the data $((\Phi_t)_* \gm_t, (\Phi_t)_* \gs_t)$ is a solution to (\ref{f:deturckflow}).  A standard argument using the naturality of the Ricci and scalar curvatures, and the uniqueness of solutions to (\ref{f:deturckflow}) and (\ref{f:CAflow}) finishes the proof.

        To prove the claim, it suffices to show that equation (\ref{f:CAflow}) is parabolic at time zero.  Recall that the objects $Z_u$ are vector fields on the total space of $E$, thus to apply the standard theory of parabolic equations, we must reformulate equation (\ref{f:CAflow}) as an equation of sections of vector bundles over $M$.  To address this we first recall some standard constructions.  Take $P$ to be the bundle of orthonormal frames for the signature $(p,q)$ inner product $\IP{\cdot,\cdot}$ on $E$; this is a principal $G:=O(p,q)$-bundle.  We then obtain the associated Atiyah algebroid, i.e.\ the Lie algebroid \[\at(E):=TP/G.\]
        The key point is that sections of $\at(E)$ are canonically identified with $G$-invariant vector fields on $P$. The bracket on $\at(E)$ corresponds to the commutator of these vector fields, while the anchor map is given by the pushforward along the projection $P\to M$. Using $P\times_G\mf g\cong \w2E$, the Atiyah algebroid fits into the exact sequence of vector bundles
    \[\begin{tikzcd}
        0 \ar[r]& \w2E\ar[r]&\at(E)\ar[r]& TM\ar[r]&0.
    \end{tikzcd}\]
    A Courant algebroid connection $D$ can be equivalently described by a vector bundle map \[\lambda^D\colon E\to \at(E)\] which preserves the anchor, i.e.\ makes
    \[\begin{tikzcd}
        &&E\ar[d]\ar[dr]&&\\
        0 \ar[r]& \w2E\ar[r]&\at(E)\ar[r]& TM\ar[r]&0
    \end{tikzcd}\]
    commutative. Similarly, for any section $u\in\Gamma(E)$, the infinitesimal automorphism $Z_u$ induces a $G$-invariant vector field on $P$ and hence a section of $\at(E)$. Let us call this map \[Y\colon \Gamma(E)\to \Gamma(\at(E)).\]
    Explicitly, if $D$ is any torsion-free connection, equation \eqref{eq:torsion} implies
    \[Y(u)=\lambda^D(u)+2\mu(Du),\]
    where the vector bundle map $\mu\colon E\otimes E\to \at(E)$ is the composition of the antisymmetrisation $E\otimes E\to \w2E$ with the canonical inclusion $\w2E\to \at(E)$.
    
    Let now $D$ be a Courant algebroid connection and $D^{\at}$ a Courant algebroid connection on the vector bundle $\at(E)$, i.e.\ a map \[D^{\at}\colon \Gamma(E)\times\Gamma(\at(E))\to\Gamma(\at(E))\] satisfying $D^{\at}_{fu}a=fD^{\at}_{u}a$ and $D^{\at}_{u}(fa)=fD^{\at}_{u}a+(\rho(u)f)a$. Using the Leibniz rule, we extend the action of $D$ and $D^{\at}$ to an operator (which we will also call $D$) acting on any $\Gamma(E^{\otimes k}\otimes \at(E)^{\otimes l})$.  This then yields the induced operator  $\Delta:=\gm^{\alpha\beta}D_\alpha D_\beta$.  
    
    With this background at hand, we compute using Lemma \ref{l:Hlinearization} and Proposition \ref{prop:sections_automorphisms},
    \begin{align*}
        P[ L( - Y(\mathcal H(\Id)))] (Y(u)) &\equiv Y(\Delta u - \differential \divg_{\gs} (\gm u)) \equiv Y(\Delta u) \equiv \Delta Y(u).
    \end{align*}
    The claim of unique short-time existence for solutions to (\ref{f:CAflow}) follows, finishing the proof.
    \end{proof}
\end{prop}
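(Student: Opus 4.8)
The plan is to adapt Hamilton's argument for the uniqueness of Ricci flow \cite{HamiltonSing} to the Courant automorphism group. Given an arbitrary solution $(\gm_t,\gs_t)$ to generalized Ricci flow with the prescribed initial data and a fixed background connection $\til D$, the goal is to reconstruct, a priori from this solution alone, a family of Courant automorphisms $\Phi_t$ conjugating it to a solution of the gauge-fixed flow \eqref{f:deturckflow}. Using the operator $\mathcal H(\Phi) := \BB(\Phi_*\gm,\Phi_*\gs,\til D)$ of Lemma \ref{l:Hlinearization}, the natural candidate is the flow $\dt\Phi = -Z_{\mathcal H(\Phi)}$ with $\Phi_0 = \Id$. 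Granting unique short-time existence for this flow, a direct computation using the naturality of the generalized Ricci and scalar curvatures under Courant automorphisms shows that $((\Phi_t)_*\gm_t,(\Phi_t)_*\gs_t)$ solves \eqref{f:deturckflow}; since that system is strictly parabolic with upper-triangular symbol --- as established in the proof of Proposition \ref{p:ste} --- it has unique solutions, and combining this with the uniqueness of the $\Phi$-flow forces uniqueness of the original generalized Ricci flow.

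Thus everything reduces to short-time existence and uniqueness for the $\Phi$-flow, which amounts to showing that it is parabolic at $t=0$. Two difficulties appear. First, each $Z_u$ is a vector field on the total space of $E$ rather than a section of a bundle over $M$, so the standard parabolic theory does not apply directly; I would recast the equation by passing to the Atiyah algebroid $\at(E)$ associated to the orthonormal frame bundle of $(E,\IP{\cdot,\cdot})$, whose sections are exactly the invariant vector fields on that frame bundle, and working with the induced first-order operator $Y\colon\Gamma(E)\to\Gamma(\at(E))$ sending $u$ to the section determined by $Z_u$, so that the $\Phi$-flow becomes a genuine evolution equation for $\at(E)$-valued data. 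Second, and more seriously, the map $u\mapsto Z_u$ is degenerate: by Proposition \ref{prop:sections_automorphisms} we have $Z_{\differential f}=0$, and the term $\differential\divg_{\gs}(\gm u)$ appearing in the linearization of the Bianchi operator in Lemma \ref{l:Hlinearization} would destroy parabolicity at the level of $\Gamma(E)$.

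The point that saves the argument is that this offending term lies in the kernel of $Y$. Concretely, combining Lemma \ref{l:Hlinearization} with Proposition \ref{prop:sections_automorphisms} one should obtain, for the linearized $\Phi$-flow viewed as an operator on $\at(E)$,
\[
P[L(-Y(\mathcal H(\Id)))](Y(u)) \;\equiv\; Y(\gD u - \differential\divg_{\gs}(\gm u)) \;\equiv\; Y(\gD u) \;\equiv\; \gD\,Y(u),
\]
where the middle step uses $Y\circ\differential=0$ and the final step uses that $\gD$ has scalar principal symbol, so that $Y$ intertwines the Laplacians built from compatible Courant algebroid connections on $E$ and on $\at(E)$ up to lower order. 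Since $\gD$ is elliptic on $E$ by Proposition \ref{p:Laplacianelliptic}, hence elliptic on $\at(E)$ under this identification, the $Y$-equation is strictly parabolic, which yields the desired short-time existence and uniqueness. I expect the main technical work to lie in setting up the compatible connection on $\at(E)$ and verifying the intertwining through $Y$, together with the bookkeeping that the conjugated data $((\Phi_t)_*\gm_t,(\Phi_t)_*\gs_t)$ genuinely satisfies \eqref{f:deturckflow}.
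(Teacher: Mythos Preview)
Your proposal is correct and follows essentially the same route as the paper: the reduction to the $\Phi$-flow, the passage to the Atiyah algebroid $\at(E)$ via the map $Y$, and the key symbol computation $P[L(-Y(\mathcal H(\Id)))](Y(u)) \equiv Y(\gD u - \differential\divg_{\gs}(\gm u)) \equiv Y(\gD u) \equiv \gD Y(u)$ all match the paper's argument, including the crucial use of $Y\circ\differential = 0$ to kill the degenerate term. The only detail the paper makes more explicit is the formula $Y(u) = \lambda^D(u) + 2\mu(Du)$ for a torsion-free $D$, which is what underlies the final intertwining step you anticipated.
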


\begin{proof}[Proof of Theorem \ref{t:mainthm1}] The result follows from Propositions \ref{p:ste} and \ref{p:uniqueness}.
\end{proof}

\section{Generalized Ricci flow as a gradient flow} \label{s:gradient}

In this section we give the gradient flow interpretation of generalized Ricci flow, adopting the approach of Perelman \cite{Perelman1}.  For emphasis, while the formal computations hold for generalized Ricci flows of arbitrary pseudometrics, the claims of monotonicity all require the hypothesis of strict positivity.

\subsection{Steady Harnack Estimate}

Along $(\gm_t, \gs_t)$ a solution to generalized Ricci flow we define the time-dependent heat operator
\begin{align*}
    \square := \dt - \gD_{\gm,\gs}.
\end{align*}
Furthermore we define the conjugate heat operator relative to $\gs^2$:
\begin{align*}
    \square^*_{\gs} := - \dt - \gD_{\gm, \gs} + \gr_{\gm,\gs}.
\end{align*}

\begin{lemma} \label{l:conjheat} Given $(\gm_t, \gs_t)$ a solution to generalized Ricci flow, for smooth functions $\phi, \psi$ one has
\begin{enumerate}
    \item $\frac{d}{dt} \int_M \phi \psi \gs^2 = \int_M \left( \psi \square \phi - \phi \square^*_{\gs} \psi \right) \gs^2$.
    \item A solution to $\square^*_{\gs} \phi = 0$ preserves mass against $\gs^2$, i.e.\ $\frac{d}{dt} \int_M \phi \gs^2 = 0$.
\end{enumerate}
\begin{proof} Given $\phi, \psi$, we first observe that as a consequence of $\int_M (\divg_{\gs} e) \gs^2 = 0$ one has that the Laplacian $\gD_{\gm,\gs}$ is self-adjoint with respect to $\gs^2$, i.e.
\begin{align*}
    0 =&\ \int_M \left( \psi \gD_{\gm, \gs} \phi - \phi \gD_{\gm,\gs} \psi \right) \gs^2.
\end{align*}
Using this we compute
\begin{align*}
    \frac{d}{dt} \int_M \phi \psi \gs^2 =&\ \int_M \left( \psi \dt \phi + \phi \dt \psi - \phi \psi \gr_{\gm,\gs} \right) \gs^2\\
    =&\ \int_M \left[ \psi \left( \dt - \gD_{\gm,\gs} \right) \phi - \phi \left( - \dt - \gD_{\gm,\gs} + \gr_{\gm,\gs} \right) \psi \right] \gs^2,
\end{align*}
as required for item (1).  Item (2) is then an elementary consequence.
\end{proof}
\end{lemma}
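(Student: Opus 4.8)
The plan is to reduce both assertions to a single integration-by-parts identity for the weighted Laplacian together with the evolution of the weight $\gs^2$ along the flow.

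First I would establish that $\gD_{\gm,\gs}$ is self-adjoint with respect to the density $\gs^2$. Recalling from Definition \ref{d:Laplace} that $\gD_{\gm,\gs}\phi=\divg_{\gs}(\gm\differential\phi)$, and using the Leibniz property of a divergence (Definition \ref{d:divg}) together with $\rho(v)\phi=\langle\differential\phi,v\rangle$, one computes
\[
\divg_{\gs}\!\big(\psi\,\gm\differential\phi-\phi\,\gm\differential\psi\big)=\psi\,\gD_{\gm,\gs}\phi-\phi\,\gD_{\gm,\gs}\psi,
\]
the cross terms $\langle\differential\psi,\gm\differential\phi\rangle$ cancelling by the symmetry of $\gm$. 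Integrating against $\gs^2$ and using $\int_M(\divg_{\gs}v)\gs^2=\int_M L_{\rho(v)}\gs^2=0$ on the compact manifold $M$ (the same fact already used in Proposition \ref{p:EHvariation}), this yields $\int_M\big(\psi\,\gD_{\gm,\gs}\phi-\phi\,\gD_{\gm,\gs}\psi\big)\gs^2=0$.

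Next I would differentiate directly. Along generalized Ricci flow $\partial_t\gs=-\tfrac12\gr_{\gm,\gs}\gs$, so $\partial_t(\gs^2)=2\gs\,\partial_t\gs=-\gr_{\gm,\gs}\,\gs^2$, whence
\[
\frac{d}{dt}\int_M\phi\psi\,\gs^2=\int_M\big(\psi\,\partial_t\phi+\phi\,\partial_t\psi-\phi\psi\,\gr_{\gm,\gs}\big)\gs^2.
\]
Now I would insert and subtract the term $\psi\,\gD_{\gm,\gs}\phi$ and convert $\int_M\psi\,\gD_{\gm,\gs}\phi\,\gs^2$ into $\int_M\phi\,\gD_{\gm,\gs}\psi\,\gs^2$ via the self-adjointness just proved; the integrand then regroups exactly as $\psi(\partial_t-\gD_{\gm,\gs})\phi-\phi(-\partial_t-\gD_{\gm,\gs}+\gr_{\gm,\gs})\psi=\psi\,\square\phi-\phi\,\square^*_{\gs}\psi$, which is (1). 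For (2) I would apply (1) with the constant function $1$ in the role of the $\square$-argument: since $\square(1)=-\gD_{\gm,\gs}(1)=-\divg_{\gs}(\gm\differential 1)=0$, identity (1) collapses to $\frac{d}{dt}\int_M\phi\,\gs^2=-\int_M\square^*_{\gs}\phi\,\gs^2$, which vanishes whenever $\square^*_{\gs}\phi=0$.

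I do not anticipate a genuine obstacle here; the only point requiring attention is that the reaction term $\gr_{\gm,\gs}$ appearing in $\square^*_{\gs}$ must be precisely $-\gs^{-2}\partial_t(\gs^2)$ --- that is, it is exactly the choice of half-density flow $\partial_t\gs=-\tfrac12\gr_{\gm,\gs}\gs$ that makes the conjugate heat operator come out right, which is the structural reason the coupled system for $(\gm,\gs)$, rather than a flow of $\gm$ alone, is the natural object in this theory.
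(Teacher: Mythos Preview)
Your proposal is correct and follows essentially the same approach as the paper's proof: establish self-adjointness of $\gD_{\gm,\gs}$ against $\gs^2$ via $\int_M(\divg_\gs v)\gs^2=0$, differentiate using $\partial_t(\gs^2)=-\gr_{\gm,\gs}\gs^2$, and regroup. You supply slightly more detail than the paper (the explicit divergence computation for self-adjointness and the specialization $\psi=1$ for item (2)), but the argument is the same.
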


\begin{prop} \label{p:steadyharnack} Given $(\gm_t, \gs_t)$ a solution to generalized Ricci flow, let $u = e^{-f}$ be a solution to $\square^*_{\gs} u = 0$.  Furthermore, let $\gs^f := e^{-f/2} \gs$.  Then
\begin{align*}
    \square^*_{\gs} \left( \gr(\gm, \gs^f) u \right) &= - \brs{\grc(\gm, \gs^f)}^2_{\gm} u.
\end{align*}
\begin{proof} Given the data of the proposition, apply the time-dependent gauge transformation generated by $\gm \differential f$, denoting these quantities still $(\gm, \gs, f)$.  We claim that the new one-parameter family $(\gm_t, \gs^f_t)$ satisfies
\begin{align*}
    \dt \gm = - 2 \grc(\gm, \gs^f), \qquad \dt \gs^f =&\ 0.
\end{align*}
The first equation follows from Proposition \ref{p:Riccidivergencechange}.  For the second we first observe the evolution equation for the gauge-fixed half-density and Lemma \ref{l:weightedlaplacian}
\begin{align*}
    \dt \gs =&\ - \tfrac{1}{2} \gr_{\gm,\gs} \gs - \mc L_{\gm \differential f} \gs = \left[ - \tfrac{1}{2} \gr_{\gm,\gs} - \tfrac{1}{2} \gD_{\gm,\gs} f \right] \gs.
\end{align*}
Also, using the gauge-fixed conjugate heat equation we compute 
\begin{align*}
\dt f &=- \gD_{\gm, \gs} f + \brs{\differential f}^2_{\gm} - \gr_{\gm,\gs} - \brs{\differential f}^2_{\gm} = - \gD_{\gm,\gs} f - \gr_{\gm,\gs}.
\end{align*}
The claim $\dt \gs^f = 0$ follows immediately.
Given this, we have from Proposition \ref{prop:variation} the evolution equation
\begin{align*}
    \dt \gr(\gm, \gs^f) &= - \gD_{\gm,\gs^f} \gr(\gm,\gs^f) + \brs{\grc(\gm, \gs^f)}_{\gm}^2\\
    &= - \gD_{\gm, \gs} \gr(\gm,\gs^f) + \IP{\differential \gr(\gm,\gs^f), \gm \differential f} + \brs{\grc(\gm, \gs^f)}_{\gm}^2.
\end{align*}
As this is an equation of scalars, for the original flow (before gauge-fixing), we conclude
\begin{align*}
    \dt \gr(\gm, \gs^f) = - \gD_{\gm,\gs} \gr(\gm,\gs^f) + 2 \IP{\differential \gr(\gm,\gs^f),\gm \differential f} + \brs{\grc(\gm,\gs^f)}^2_{\gm}.
\end{align*}
We can now compute
\begin{align*}
    \square^*_{\gs}(\gr(\gm,\gs^f) u) &= \left(- \dt - \gD_{\gm,\gs} + \gr(\gm,\gs) \right) \left[ \gr(\gm, \gs^f) u \right]\\
    &= \left[ \left( - \dt - \gD_{\gm, \gs} \right) \gr(\gm,\gs^f) \right] u + \gr(\gm,\gs^f) \square^*_{\gs} u  - 2 \IP{\differential \gr(\gm,\gs^f),\gm \differential u}\\
    &= -\brs{\grc(\gm, \gs^f)}^2_{\gm} u,
\end{align*}
as claimed.
\end{proof}
\end{prop}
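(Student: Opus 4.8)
The plan is to reduce the identity to the scalar curvature evolution of Proposition~\ref{prop:variation} (compare Theorem~\ref{t:scalarcurvmon}) by a time-dependent gauge transformation that freezes the weighted half-density $\gs^f$. The key observation is that replacing $\gs$ by $\gs^f = e^{-f/2}\gs$ is precisely a shift of the divergence operator, since $\divg_{\gs^f} = \divg_{\gs} - \differential f$, so differences between the $\gs$-data and the $\gs^f$-data are controlled by Proposition~\ref{p:Riccidivergencechange} and the definition of $\gD$.

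First I would apply the one-parameter family of Courant automorphisms generated by $\gm\differential f$ to the given solution $(\gm_t,\gs_t)$, keeping the same names for all quantities. By Proposition~\ref{p:Riccidivergencechange} one has $\grc(\gm,\gs^f) = \grc(\gm,\gs) + \tfrac12\mc L_{\gm\differential f}\gm$, so the gauge-fixed metric evolves by $\dt\gm = -2\grc(\gm,\gs^f)$. For the half-density I would start from the gauge-fixed evolution $\dt\gs = \big(-\tfrac12\gr_{\gm,\gs} - \mc L_{\gm\differential f}\big)\gs$, use Lemma~\ref{l:weightedlaplacian} to write $\mc L_{\gm\differential f}\gs = \tfrac12(\gD_{\gm,\gs}f)\gs$, and use that $\square^*_\gs e^{-f}=0$ becomes, after the same gauge fixing, $\dt f = -\gD_{\gm,\gs}f - \gr_{\gm,\gs}$; combining these shows $\dt\gs^f = 0$ directly.

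With $\gs^f$ frozen I would feed $\chi = -2\grc(\gm,\gs^f)$ and $\varepsilon = \dt\divg_{\gs^f} = 0$ (Lemma~\ref{lem:vary_divergence}) into Proposition~\ref{prop:variation}, and apply the contracted Bianchi identity of Proposition~\ref{prop:second_contracted_bianchi} to obtain, for the gauge-fixed flow, $\dt\gr(\gm,\gs^f) = -\gD_{\gm,\gs^f}\gr(\gm,\gs^f) + \brs{\grc(\gm,\gs^f)}^2_{\gm}$; note that because the divergence is now constant one lands on $-\gD$ rather than the $+\gD$ of Theorem~\ref{t:scalarcurvmon}. Then I would rewrite $\gD_{\gm,\gs^f}\phi = \gD_{\gm,\gs}\phi - \IP{\differential\phi,\gm\differential f}$ and undo the gauge transformation: as $\gr(\gm,\gs^f)$ is a scalar, the automorphism only adds $\mc L_{\gm\differential f}\phi = \IP{\differential\phi,\gm\differential f}$ to the time derivative, yielding along the original flow $\dt\gr(\gm,\gs^f) = -\gD_{\gm,\gs}\gr(\gm,\gs^f) + 2\IP{\differential\gr(\gm,\gs^f),\gm\differential f} + \brs{\grc(\gm,\gs^f)}^2_{\gm}$.

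Finally I would expand $\square^*_\gs\big(\gr(\gm,\gs^f)u\big)$ using the product rule for $\gD_{\gm,\gs}$. This produces the term $\big[(-\dt - \gD_{\gm,\gs})\gr(\gm,\gs^f)\big]u$, which by the previous step equals $\big(-2\IP{\differential\gr(\gm,\gs^f),\gm\differential f} - \brs{\grc(\gm,\gs^f)}^2_{\gm}\big)u$; the term $\gr(\gm,\gs^f)\,\square^*_\gs u$, which vanishes by hypothesis; and the cross term $-2\IP{\differential\gr(\gm,\gs^f),\gm\differential u}$, which equals $2u\,\IP{\differential\gr(\gm,\gs^f),\gm\differential f}$ since $\differential u = -u\,\differential f$. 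The two first-order terms cancel, leaving $-\brs{\grc(\gm,\gs^f)}^2_{\gm}u$. I expect the main obstacle to be conceptual rather than computational: recognizing that the $e^{-f/2}$ weighting is the divergence-gauge freedom, and that the conjugate heat equation is exactly what keeps $\gs^f$ stationary. Once that is in place, the remaining work is careful but routine bookkeeping of how $\gD$, $\gr$ and $\grc$ change under a shift of divergence operator and under a Courant automorphism.
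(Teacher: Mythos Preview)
Your proposal is correct and follows essentially the same route as the paper's proof: apply the time-dependent gauge generated by $\gm\differential f$, show that $\gs^f$ is frozen (using Lemma~\ref{l:weightedlaplacian} and the gauge-fixed form of $\square^*_\gs u=0$), derive the evolution of $\gr(\gm,\gs^f)$ from Proposition~\ref{prop:variation}, translate back, and finish with the product-rule expansion. You are slightly more explicit than the paper in two places---naming the contracted Bianchi identity (Proposition~\ref{prop:second_contracted_bianchi}) when simplifying the variation formula, and writing out $\gD_{\gm,\gs^f}\phi = \gD_{\gm,\gs}\phi - \IP{\differential\phi,\gm\differential f}$---but these are exactly the computations the paper's proof relies on implicitly.
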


\subsection{\texorpdfstring{$\gl$}{lambda}-functional monotonicity}

\begin{defn} Given $E$ a Courant algebroid over a compact base $M$, define
\begin{align*}
    \gl& \colon \ms M^+ \to \mathbb R, \qquad 
    \gl(\gm) := \inf_{ \{\gs \in \ms H^* \mid\, \int \gs^2 = 1 \}} \eh(\gm, \gs).
\end{align*}
\end{defn}

\begin{rmk} It follows from Proposition \ref{p:scalardivchange} and a standard argument (cf.\ \cite{KL} Proposition 7.1) that for any metric $\gm$ there exists a unique $\gs$ achieving the infimum defining $\gl$.  In particular, by Remark \ref{r:EHLaplace} such $\gs$ satisfies $\check{\gD}_{\gm} \gs = 0$, a self-adjoint elliptic equation.
\end{rmk}

\begin{defn} \label{d:soliton} A generalized metric $\gm$ is a \emph{soliton} if for the unique $\gs$ achieving the infimum defining $\gl (\gm)$, one has $\grc(\gm,\gs) =0$.
\end{defn}

\begin{rmk} We note that by the Bianchi identity of Proposition \ref{prop:second_contracted_bianchi}, a soliton $\gm$ automatically has $\gr(\gm, \gs)$ constant.  Furthermore, by Proposition \ref{p:Riccidivergencechange}, it follows that for arbitrary initial half-density $\gs_0$, the generalized Ricci flow with initial data $(\gm, \gs_0)$ will evolve $\gm$ purely by Courant automorphism pullback.  Also, by adapting \cite{Streetsscalar} Proposition 3.8 one can show that the half-density will converge after normalization and gauge-fixing to $\gs$.  
\end{rmk}

\begin{lemma}
    Fix $\gm\in\ms M^+$. If there exists a volume-normalized half-density $\gs$ such that $\grc(\gm, \gs) = 0$, then this $\gs$ achieves the infimum in the definition of $\gl$. Thus $\gm$ is a soliton.
\end{lemma}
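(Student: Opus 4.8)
The plan is to verify directly that $\gs$ minimises $\eh(\gm,\cdot)$ over the normalised positive half-densities, reproducing in this setting Perelman's ``completing the square'' for the $\FF$-functional. The first step is to observe that $\grc(\gm,\gs)=0$ forces $\gr_{\gm,\gs}$ to be constant. Indeed, the generalized contracted Bianchi identity (Proposition \ref{prop:second_contracted_bianchi}) reads $\gm\indices{^\alpha_\beta}D_\alpha\grc\indices{^\beta_\gamma}=\tfrac12 D_\gamma\gr$, so the hypothesis gives $D_\gamma\gr=\rho(e_\gamma)\gr=0$ for every frame element; since the existence of a generalized metric requires $\rho$ to be surjective, this says $d\gr=0$, hence $\gr_{\gm,\gs}\equiv c$ is constant on (the connected, compact) $M$. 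Using $\int_M\gs^2=1$ we get $\eh(\gm,\gs)=\int_M\gr_{\gm,\gs}\gs^2=c$.

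Next I would estimate $\eh(\gm,\gs')$ for an arbitrary $\gs'\in\ms H^*$ with $\int_M\gs'^2=1$. Since $\gs,\gs'$ are positive we may write $\gs'=e^{-f/2}\gs$ for a unique $f\in C^\infty(M)$, and then $\divg_{\gs'}=\divg_\gs-\differential f$. Applying Proposition \ref{p:scalardivchange} with $e=-\differential f$, together with $\divg_\gs(\gm\differential f)=\gD_{\gm,\gs}f$ (Definition \ref{d:Laplace}), gives $\gr_{\gm,\gs'}=c-\brs{\differential f}^2_\gm+2\gD_{\gm,\gs}f$, so that
\[\eh(\gm,\gs')=\int_M\big(c-\brs{\differential f}^2_\gm+2\gD_{\gm,\gs}f\big)e^{-f}\gs^2.\]
Now integrate the Laplacian term by parts: using $\int_M\divg_\gs(w)\gs^2=0$ for all $w\in\Gamma(E)$ together with the divergence product rule one finds $\int_M(\gD_{\gm,\gs}f)e^{-f}\gs^2=-\int_M\rho(\gm\differential f)(e^{-f})\gs^2=\int_M e^{-f}\brs{\differential f}^2_\gm\gs^2$, the last equality because $\rho(\gm\differential f)(f)=\langle\differential f,\gm\differential f\rangle=\brs{\differential f}^2_\gm$. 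Substituting and using $\int_M e^{-f}\gs^2=\int_M\gs'^2=1$ yields
\[\eh(\gm,\gs')=c+\int_M\brs{\differential f}^2_\gm\,e^{-f}\gs^2.\]

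Finally, under strict positivity of $\gm$ the pairing $\langle\gm\cdot,\cdot\rangle$ is positive definite, so the correction term is $\geq 0$ and $\eh(\gm,\gs')\geq c=\eh(\gm,\gs)$ for every admissible $\gs'$; thus $\gs$ realises the infimum $\gl(\gm)$. Since the minimiser of $\eh(\gm,\cdot)$ under $\int\gs^2=1$ is unique (recorded in the remark following the definition of $\gl$), $\gs$ is exactly the half-density appearing in Definition \ref{d:soliton}, and since $\grc(\gm,\gs)=0$ by hypothesis, $\gm$ is a soliton.

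The step I expect to be the main obstacle is purely careful bookkeeping: getting the signs right in Proposition \ref{p:scalardivchange} (the shift $\divg_{\gs'}-\divg_\gs=-\differential f$, and the consistent use of the identification $\Gamma(E^*)\cong\Gamma(E)$), and verifying the integration-by-parts identity for the weighted Laplacian $\gD_{\gm,\gs}$ — everything else is formal. It is worth emphasising that strict positivity is genuinely needed here: without it $\brs{\differential f}^2_\gm$ is indefinite and $\gs$ need not minimise $\eh(\gm,\cdot)$.
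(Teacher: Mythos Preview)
Your argument is correct and gives a genuinely different proof from the paper's. The paper observes that $\eh(\gm,\cdot)$ is quadratic in $\gs$ (Remark \ref{r:EHLaplace}), shows via Proposition \ref{p:EHvariation} and the constancy of $\gr$ that the given $\gs$ is a critical point of $\eh(\gm,\cdot)$ on the unit sphere of half-densities, and then appeals to the uniqueness of the positive minimiser (the Schr\"odinger ground-state fact recorded in the remark after the definition of $\gl$). Your approach instead computes $\eh(\gm,\gs')-\eh(\gm,\gs)$ explicitly for every competitor, in the spirit of Perelman's completing-the-square, and obtains the sharp formula $\eh(\gm,\gs')=c+\int_M|\differential f|^2_\gm e^{-f}\gs^2$. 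This is more elementary, avoids the spectral input, and gives quantitative information (the exact defect), at the price of the integration-by-parts computation.

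One correction to your final paragraph: strict positivity is \emph{not} needed here. You have $|\differential f|^2_\gm=\langle\differential f,\gm\differential f\rangle=\rho(\gm\differential f)(f)$, and by Lemma \ref{l:weightedlaplacian} this equals $(\nabla f)(f)=|df|^2_g\geq 0$, using only that $\gm\in\ms M^+$ (so that $\rho|_{V_+}$ is an isomorphism and $g$ is Riemannian). Thus your inequality $\eh(\gm,\gs')\geq c$ holds for all generalized metrics, matching the generality of the paper's statement. The indefiniteness of $\langle\gm\cdot,\cdot\rangle$ on general sections is irrelevant because $\differential f$ always lies in the isotropic image of $\rho^T$.
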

\begin{proof}
    If $\grc(\gm,\gs)=0$, Proposition \ref{p:EHvariation} and the constancy of $\gr(\gm,\gs)$ imply that the variation of the Einstein--Hilbert functional at $(\gm,\gs)$ is
    \[\dels \eh (\gm,\gs) =-(\nu,\sigma)_{\gm,\sigma}\gr(\gm,\gs).\]
    Since this vanishes when the variation is restricted to normalised half-densities, $\lambda$ attains an extremum at $\sigma$. The claim then follows from the uniqueness of the extrema.
\end{proof}

\begin{thm} \label{t:gradient} (cf.\ Theorem \ref{t:gradientintro}) For strictly positive metrics, generalized Ricci flow is the gradient flow of $\gl$.  More precisely, given $(\gm_t, \gs_t)$ a solution to generalized Ricci flow with strictly positive initial data over a compact manifold $M$, for any smooth existence times $t_1 < t_2$ one has $\gl(\gm_{t_1}) \leq \gl(\gm_{t_2})$.  Furthermore, equality holds if and only if $\gm_{t}$ is a soliton.
\end{thm}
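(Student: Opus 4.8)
The plan is to mimic Perelman's argument, using the steady Harnack estimate of Proposition \ref{p:steadyharnack} as the analytic engine. First I would fix smooth existence times $t_1 < t_2$. Let $\gs_{t_2}^{\min}$ be the unique volume-normalized half-density achieving the infimum defining $\gl(\gm_{t_2})$, which exists by the Remark following the definition of $\gl$, so that $\gl(\gm_{t_2}) = \eh(\gm_{t_2}, \gs_{t_2}^{\min})$. Writing $\gs_{t_2}^{\min} = e^{-f_{t_2}/2} \gs_{t_2}$ for the flowing half-density $\gs_{t_2}$, I would then solve the conjugate heat equation $\square^*_{\gs} u = 0$ \emph{backwards} in time on $[t_1, t_2]$ with terminal data $u_{t_2} = e^{-f_{t_2}}$. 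By Lemma \ref{l:conjheat}(2) the mass $\int_M u \gs^2$ is preserved, so at every time $t \in [t_1, t_2]$ the half-density $\gs^f_t = e^{-f_t/2} \gs_t$ (with $u_t = e^{-f_t}$) is volume-normalized. Hence, by the definition of $\gl$ as an infimum, $\gl(\gm_{t_1}) \leq \eh(\gm_{t_1}, \gs^f_{t_1})$, while $\eh(\gm_{t_2}, \gs^f_{t_2}) = \eh(\gm_{t_2}, \gs_{t_2}^{\min}) = \gl(\gm_{t_2})$ by construction.

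The monotonicity then reduces to showing $\frac{d}{dt} \eh(\gm_t, \gs^f_t) \geq 0$. Since $\gs^f$ is volume-normalized for all $t$ and $\square^*_\gs u = 0$, Lemma \ref{l:conjheat} lets me rewrite $\eh(\gm_t, \gs^f_t) = \int_M \gr(\gm, \gs^f)\, (\gs^f)^2 = \int_M \gr(\gm, \gs^f)\, u\, \gs^2$. Differentiating in $t$ and using Lemma \ref{l:conjheat}(1) with $\phi = \gr(\gm, \gs^f)$ and $\psi = u$, together with $\square^*_\gs u = 0$, gives
\begin{align*}
    \frac{d}{dt} \eh(\gm_t, \gs^f_t) = \int_M u\, \square\bigl( \gr(\gm, \gs^f) \bigr) \gs^2 = \int_M u \cdot \square^*_\gs\bigl( \gr(\gm, \gs^f) u \bigr) \gs^2 \Big/ \,(\text{care with signs}),
\end{align*}
but more cleanly: by Proposition \ref{p:steadyharnack}, $\square^*_\gs\bigl(\gr(\gm,\gs^f) u\bigr) = -\brs{\grc(\gm,\gs^f)}^2_\gm u$, and pairing against the constant function $1$ using Lemma \ref{l:conjheat}(1) (with $\psi = 1$, noting $\square 1 = \gr(\gm,\gs)$) yields $\frac{d}{dt} \int_M \gr(\gm,\gs^f) u \gs^2 = \int_M \brs{\grc(\gm,\gs^f)}^2_\gm\, u\, \gs^2$. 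By strict positivity of the metrics, $\brs{\grc(\gm,\gs^f)}^2_\gm \geq 0$ and $u = e^{-f} > 0$, so the right side is nonnegative, giving the desired monotonicity
\begin{align*}
    \gl(\gm_{t_1}) \leq \eh(\gm_{t_1}, \gs^f_{t_1}) \leq \eh(\gm_{t_2}, \gs^f_{t_2}) = \gl(\gm_{t_2}).
\end{align*}

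For the equality case: if $\gl(\gm_{t_1}) = \gl(\gm_{t_2})$, then all intermediate inequalities are equalities, so $\int_M \brs{\grc(\gm, \gs^f)}^2_\gm u\, \gs^2 \equiv 0$ on $[t_1, t_2]$; since $u > 0$ and the integrand is nonnegative by strict positivity, $\grc(\gm_t, \gs^f_t) = 0$ for all $t \in [t_1, t_2]$. Moreover the first inequality $\gl(\gm_{t_1}) \leq \eh(\gm_{t_1}, \gs^f_{t_1})$ being an equality forces $\gs^f_{t_1}$ to be the minimizer defining $\gl(\gm_{t_1})$, so by Definition \ref{d:soliton} $\gm_{t_1}$ is a soliton, and similarly at every intermediate time. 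Conversely, if $\gm_{t}$ is a soliton, the Remark following Definition \ref{d:soliton} shows $\gm$ evolves purely by Courant automorphism pullback, along which $\eh$ — and hence $\gl$ — is invariant, so $\gl(\gm_{t})$ is constant. The main obstacle I anticipate is bookkeeping the gauge transformation by $\gm \differential f$ used inside Proposition \ref{p:steadyharnack} (which alters $\gm$ by a Courant automorphism, not affecting $\gl$) and confirming that the backward conjugate heat equation has a smooth solution on the full interval $[t_1, t_2]$ — the latter is standard linear parabolic theory on a compact manifold since $\gD_{\gm,\gs}$ is strictly elliptic with smooth coefficients (Proposition \ref{p:Laplacianelliptic}).
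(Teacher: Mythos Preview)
Your proposal is correct and follows essentially the same approach as the paper: solve the conjugate heat equation backward from the $\gl$-minimizer at $t_2$, then combine Lemma~\ref{l:conjheat}(1) with Proposition~\ref{p:steadyharnack} to get $\frac{d}{dt}\eh(\gm,\gs^f) = \int_M \brs{\grc(\gm,\gs^f)}^2_\gm\, u\,\gs^2 \geq 0$, and chain the inequalities $\gl(\gm_{t_1}) \leq \eh(\gm_{t_1},\gs^f_{t_1}) \leq \eh(\gm_{t_2},\gs^f_{t_2}) = \gl(\gm_{t_2})$. One minor bookkeeping slip: in your ``more cleanly'' step you should take $\phi = 1$ and $\psi = \gr(\gm,\gs^f)u$ in Lemma~\ref{l:conjheat}(1), so that $\square 1 = 0$ (not $\gr$) and the $-\square^*_\gs\psi$ term delivers the Harnack identity directly --- this is exactly what the paper does.
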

\begin{proof} Fix existence times $t_1 < t_2$, and at time $t_2$ let $f_{t_2}$ be the unique function such that $\gs_{t_2}^{f_{t_2}}$ achieves the infimum in the definition of $\gl(\gm_{t_2})$.  As the conjugate heat operator is a linear parabolic equation in backwards time, there exists a unique solution $u = e^{-f}$ to $\square^*_{\gs} u = 0$ with final data $e^{-f_{t_2}}$ on $[t_1,t_2]$.  From Lemma \ref{l:conjheat} part (1) and Proposition \ref{p:steadyharnack} we conclude on the time interval $[t_1,t_2]$,
\begin{align*}
    \frac{d}{dt} \eh(\gm, \gs^f) =&\ \frac{d}{dt} \int_M \gr(\gm, \gs^f) u \gs^2 = \int_M \left[ - \square^*_{\sigma} \left( \gr(\gm, \gs^f) u \right) \right] \gs^2 = \int_M \brs{\grc(\gm, \gs^f)}^2_{\gm} u\sigma^2.
\end{align*}
In particular this derivative is nonnegative, and we conclude
\begin{align*}
\gl(\gm_{t_1}) \leq \eh(\gm_{t_1},\gs_{t_1}^f) \leq \eh(\gm_{t_2},\gs_{t_2}^f) = \gl(\gm_{t_2}).
\end{align*}
Thus the monotonicity follows, and in the case of equality the computation above shows that $\grc(\gm, \gs^f)$ vanishes, and thus $\gm$ is a soliton.
\end{proof}

\subsection{Convergence of nonsingular solutions}

\begin{defn} \label{d:nonsingular} We say a solution $(\gm_t, \gs_t)$ to generalized Ricci flow is \emph{nonsingular} if it exists on $[0, \infty)$ and for any sequence of times $\{t_i\} \to \infty$ there exists a subsequence still denoted $\{t_i\}$, Courant automorphisms $\Phi_{t_i}$, and $\ga_{t_i} \in \mathbb R_{>0}$ such that the sequence $(\Phi_{t_i}^* \gm_{t_i}, \ga_{t_i} \Phi_{t_i}^* \gs_{t_i})$ converges to a limiting pair $(\gm_{\infty}, \gs_{\infty})$.
\end{defn}

\begin{rmk} Note that the notion of convergence in this definition is a natural extension of the notion of Cheeger--Gromov convergence of Riemannian metrics to the setting of generalized metrics on Courant algebroids.  Furthermore, it implies that all invariant objects constructed from $(\gm, \gs)$ such as Ricci and scalar curvature etc.\ are bounded.  The scaling factors $\ga$ for the half-densities are necessary since generalized Ricci flat structures can have positive scalar curvature, and so one only expects the generalized Ricci flow of such objects to be convergent up to scaling the half-density.  Lastly we note that in the Ricci flow literature the term `nonsingular' is usually weaker than this, only imposing a uniform bound on the Riemannian curvature tensor.
\end{rmk}

\begin{cor} \label{c:convcor2} (cf.\ Corollary \ref{c:convcor}) Every nonsingular solution to generalized Ricci flow with strictly positive initial data converges subsequentially to a soliton.
\begin{proof} We first note that by the assumption of the solution being nonsingular it follows that all Courant automorphism-invariant quantities are uniformly bounded for all times.  In particular $\gl$ is monotonically increasing and bounded above, and moreover all of its time derivatives are uniformly bounded.  It follows that $\lim_{t \to \infty} \frac{d}{dt} \gl = 0$.  Fix now any sequence $\{t_i \to \infty\}$, then by hypothesis choose the relevant subsequence, still denoted $\{t_i\}$, and Courant automorphisms $\Phi_{t_i}$ and $\ga_{t_i}$ so that the sequence $(\til{\gm}_i, \til{\gs}_i) := (\Phi_{t_i}^* \gm_{t_i}, \ga_{t_i} \Phi^*_{t_i} \gs_{t_i})$ converges to a limit $(\gm_{\infty}, \gs_{\infty})$.
Now let $\til{\gs}_{i}$ denote the unique function achieving the infimum in the definition of $\gl(\til{\gm}_i)$.  It follows from the remarks above that $\grc(\gm_{\infty}, \gs_{\infty}) = \lim_{i \to \infty} \grc(\til{\gm}_i, \til{\gs}_i) = 0$, as claimed.
\end{proof}
\end{cor}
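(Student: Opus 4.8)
The plan is to combine the $\gl$-monotonicity of Theorem \ref{t:gradient} with the nonsingularity hypothesis, following Perelman's treatment of nonsingular Ricci flows.

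First I would record the consequences of nonsingularity. The solution exists on $[0,\infty)$, and every Courant automorphism-invariant quantity built from $(\gm_t,\gs_t)$ is uniformly bounded on $[0,\infty)$ --- in particular the curvatures $\grc_{\gm,\gs},\gr_{\gm,\gs}$ and, by standard parabolic (Bando--Shi type) smoothing estimates for generalized Ricci flow, all of their covariant derivatives. By Theorem \ref{t:gradient} the function $t\mapsto\gl(\gm_t)$ is non-decreasing; applying the nonsingularity hypothesis to the sequence $t_i=i$ yields Courant automorphisms $\Phi_i$ so that, along a subsequence, $\Phi_i^*\gm_i$ converges, whence $\gl(\gm_i)=\gl(\Phi_i^*\gm_i)$ is bounded along that subsequence. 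By monotonicity $\gl(\gm_t)$ is then bounded above for all $t$ and hence converges to some $\gL_\infty\in\mathbb R$.

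Second I would differentiate $\gl$ along the flow. For each $t$ let $\gs^*_t$ be the unique positive, volume-normalized half-density achieving the infimum defining $\gl(\gm_t)$; by the remark following the definition of $\gl$ it solves a self-adjoint elliptic equation, so by uniqueness and standard elliptic theory it depends smoothly on $t$ and continuously on $\gm$. Since the constraint $\int_M\gs^2=1$ is independent of $\gm$, the envelope theorem together with Proposition \ref{p:EHvariation} gives $(d\gl)_{\gm}[\chi]=\int_M\tfrac12\chi^{\alpha\beta}\grc(\gm,\gs^*)_{\alpha\beta}\,(\gs^*)^2$, the $\gs$-variation dropping out at the minimizer. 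Moreover, since $\gl$ is invariant under Courant automorphisms, $(d\gl)_\gm$ annihilates every orbit direction $\mc L_{\gm e}\gm$; Proposition \ref{p:Riccidivergencechange} exhibits $\grc(\gm_t,\gs_t)-\grc(\gm_t,\gs^*_t)$ as such an orbit direction, so inside $(d\gl)_{\gm_t}$ we may replace $\dt\gm_t=-2\grc(\gm_t,\gs_t)$ by $-2\grc(\gm_t,\gs^*_t)$, obtaining
\[\dt\gl(\gm_t)=\int_M\brs{\grc(\gm_t,\gs^*_t)}^2_{\gm_t}\,(\gs^*_t)^2\geq 0,\]
the sign by strict positivity (cf.\ Remark \ref{r:strictpositive}). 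As $\gl(\gm_t)$ is monotone, bounded, and --- using the curvature-derivative bounds above and the smooth dependence of $\gs^*_t$ --- has uniformly bounded second $t$-derivative, we conclude $\dt\gl(\gm_t)\to 0$, i.e.
\[\int_M\brs{\grc(\gm_t,\gs^*_t)}^2_{\gm_t}\,(\gs^*_t)^2\longrightarrow 0\qquad(t\to\infty).\]

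Third I would pass to a Cheeger--Gromov limit. Fix any sequence $t_i\to\infty$; by nonsingularity, after passing to a subsequence there are Courant automorphisms $\Phi_{t_i}$ and scalars $\ga_{t_i}>0$ with $(\til\gm_i,\til\gs_i):=(\Phi_{t_i}^*\gm_{t_i},\ga_{t_i}\Phi_{t_i}^*\gs_{t_i})\to(\gm_\infty,\gs_\infty)$ smoothly. The expression $\gm\mapsto\int_M\brs{\grc(\gm,\gs^*(\gm))}^2_{\gm}\,\gs^*(\gm)^2$, with $\gs^*(\gm)$ the $\gl$-minimizer of $\gm$, depends only on $\gm$ and is Courant automorphism-invariant, so its value at $\til\gm_i$ equals $\dt\gl(\gm_{t_i})$. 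Since $\til\gm_i\to\gm_\infty$ and, by continuity of the minimizer, $\gs^*(\til\gm_i)\to\gs^*(\gm_\infty)$, letting $i\to\infty$ gives $\int_M\brs{\grc(\gm_\infty,\gs^*(\gm_\infty))}^2_{\gm_\infty}\,\gs^*(\gm_\infty)^2=0$. As $\gm_\infty$ is again strictly positive, $\brs{\cdot}_{\gm_\infty}$ is positive definite, so $\grc(\gm_\infty,\gs^*(\gm_\infty))\equiv 0$; by the Lemma preceding Theorem \ref{t:gradient}, $\gm_\infty$ is a soliton.

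The step I expect to be the main obstacle is the second one --- making the differentiation of $\gl$ rigorous and extracting $\dt\gl\to 0$. It rests on two standard but nontrivial inputs: smooth dependence (and $C^\infty$-continuity) of the $\gl$-minimizer on the generalized metric, available because $\gs^*(\gm)$ is the positive eigenfunction of the extremal --- hence simple --- eigenvalue of the self-adjoint elliptic operator $\check\gD_{\gm}$; and uniform-in-time control of all covariant derivatives of the curvature along the nonsingular flow, from parabolic smoothing estimates. An alternative that sidesteps the differentiability discussion is to argue on the intervals $[t_i,t_i+1]$ with the conjugate-heat solutions of Proposition \ref{p:steadyharnack}, exactly as in the proof of Theorem \ref{t:gradient}, at the price of replacing $\gs^*_t$ by a conjugate-heat-evolved half-density and passing to a well-chosen time in each interval. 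Everything else --- the $\gl$-monotonicity, the Cheeger--Gromov compactness, and the Courant-automorphism invariance of $\int_M\brs{\grc}^2_{\gm}$ --- is routine given the results already established.
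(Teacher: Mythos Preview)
Your proposal is correct and follows essentially the same strategy as the paper: use Theorem \ref{t:gradient} plus nonsingularity to get $\frac{d}{dt}\gl\to 0$, then pass to a Cheeger--Gromov limit and invoke continuity of the $\gl$-minimizer to conclude the limit is a soliton. You are more explicit than the paper about the formula $\frac{d}{dt}\gl=\int_M\brs{\grc(\gm_t,\gs^*_t)}^2_{\gm_t}(\gs^*_t)^2$ (obtained via the envelope theorem and the orbit-direction argument using Proposition \ref{p:Riccidivergencechange}) and the smooth dependence of the minimizer, but the architecture is identical.
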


\section{Remarks and further questions} \label{s:outlook}

\begin{rmk}[T-duality and its generalizations]\label{r:Tduality}
Physical interpretation of the generalized Ricci flow as the one-loop renormalisation group flow of the string nonlinear sigma model suggests a nontrivial compatibility between the flow and the so-called \emph{T-duality} phenomenon. The latter admits a natural description in terms of Courant algebroids \cite{GualtieriTdual}. Following this interpretation the compatibility was shown in the case of exact Courant algebroids in \cite{BarHek,StreetsTdual,GF19}. This was later generalized in \cite{GF19} to the case where the duality corresponds to a pair of (arbitrary) \emph{$T^n$-equivariant Courant algebroids} producing the same quotient.

A wide (non-abelian) generalization of T-duality is provided by the \emph{Poisson--Lie T-duality} \cite{Klimcik:1995ux}, which in its most general form \cite{Klimcik:1996np} can be conveniently described in terms of \emph{Courant algebroid pullbacks} and \emph{equivariant reductions} \cite{Severa,SeveraPLTDuality,SeveraValach1,SeveraValach2}. The full compatibility between Poisson--Lie T-duality and the generalized Ricci flow was established in \cite{SeveraValach2}. 
\end{rmk}

\begin{rmk}[Courant algebroids with point base]
    Related to the previous remark, let us briefly comment on the setup of Example \ref{ex:trivial}, corresponding to $M=\text{point}$, in which case $E=\mf g$ is a quadratic Lie algebra. We then have $\ms H^*\cong \mb R^+$, with all half-densities inducing the trivial divergence $\div=0$. For such a divergence we get the algebraic expressions
    \begin{align*}
    \grc\indices{^\alpha_\beta}&=\tfrac14(c^{\alpha\gamma\delta}c\indices{_\beta_\gamma_\delta}-\gm\indices{^\alpha_\gamma}\gm\indices{^\delta_\beta}c^{\gamma\epsilon\zeta}c_{\delta\epsilon\zeta}-\gm\indices{^\gamma_\delta}\gm\indices{^\epsilon_\zeta}c^{\alpha\delta\zeta}c_{\beta\gamma\epsilon}+\gm\indices{^\alpha_\gamma}\gm\indices{^\delta_\beta}\gm\indices{^\eta_\epsilon}\gm\indices{^\theta_\zeta}c^{\gamma\epsilon\zeta}c_{\delta\eta\theta})\\
    \gr&=\tfrac14\gm\indices{^\alpha_\delta}c_{\alpha\beta\gamma}c^{\delta\beta\gamma}-\tfrac1{12}\gm\indices{^\alpha_\delta}\gm\indices{^\beta_\epsilon}\gm\indices{^\gamma_\zeta}c_{\alpha\beta\gamma}c^{\delta\epsilon\zeta},
    \end{align*}
    where $c_{\alpha\beta\gamma}$ are the structure constants of $\mf g$. In the generalized Ricci flow the evolution of $\sigma$ decouples from that of $\gm$, which is simply the ODE
    \[\frac{d}{dt}\gm=-2\grc,\]
    with $\div=0$, on the now finite-dimensional space $\ms M$. Various results from the present text still apply, in particular the curvature evolution along the generalized Ricci flow 
    \[\frac{d}{dt}\gr=\vert\!\grc\!\vert_{\gm}^2,\]
    or the Bianchi identity, which simplifies to $D_\alpha \grc^{\alpha\beta}=0$ for any Levi-Civita connection $D$.
    The interest in such an algebraic setup comes from the fact that fixed points of this flow correspond under suitable conditions \cite{SeveraValach2} to fixed points of the generalized Ricci flow on particular exact Courant algebroids. The results of the present text might thus lead to insights into the nature of such solutions and perhaps to an efficient procedure for their search.
\end{rmk}

\begin{ques} A natural question is what the basic regularity requirements are to extend a solution to generalized Ricci flow past a certain time.  Here one expects a parabolic regularity theory centered around the curvature tensor and its derivatives as in the case of Ricci flow.  Here a further wrinkle is present due to the nonuniqueness of the Levi-Civita connection, and one likely needs to choose an initial Levi-Civita connection and use the connections constructed in Proposition \ref{prop:defconnection} to obtain smoothing estimates and clear regularity criteria.
\end{ques}

\begin{ques} A basic structural property of generalized Ricci flow on exact Courant algebroids (cf.\ \ref{f:exactGRF}) and on certain transitive Courant algebroids obtained by reduction (cf.\ \cite{GF19}) is that the metric is a \emph{supersolution} to Ricci flow.  This yields immediately many a priori estimates holding for such supersolutions \cite{McCannTopping,BamlerHeatKernel}.  A natural question is if this property holds in general.  Similarly, as shown in \cite{Streetsscalar}, in the exact case, the dilaton flow is a supersolution to the heat equation, again leading to an important a priori estimate.  Furthermore, upper bounds on the dilaton lead to Perelman entropy functionals and $\gk$-noncollapsing results.  Understanding better the precise structure of the dilaton flow in general should lead to extensions of these key properties.
\end{ques}

\begin{ques}
    Transitive Courant algebroids sit inside a larger class of \emph{regular} Courant algebroids, whose defining condition is that $\rho$ has constant rank. Although in the non-transitive case such algebroids do not admit generalized metrics as defined in the present text, one can try to relax the corresponding definition by allowing all generalized pseudometrics for which $\langle\cdot,\cdot\rangle|_{V_+}$ is positive definite and $\rho$ restricts to an isomorphism between $V_+$ and the subbundle $\rho(E)\subset TM$. Since $\rho(E)$ is integrable, the base manifold then decomposes into integral leaves, each of which carries a transitive Courant algebroid with a generalized metric in the usual sense. Generalized Ricci flow in this context thus corresponds to the study of families of generalized Ricci flows on transitive Courant algebroids. A natural question is to investigate basic properties of such flow, and perhaps even extend some results to the case of arbitrary (not necessarily regular) Courant algebroids.
\end{ques}

\begin{ques} An interesting question is to further understand the relationship of generalized Ricci flow to complex geometry.  In the exact CA case it was shown in \cite{PCFReg} that the generalized Ricci flow preserves the data of a pluriclosed Hermitian metric, after coupling to a flow of the complex structure.  As shown in \cite{GKRF} the flow even preserves the generalized K\"ahler condition \cite{GualtieriGKG}.  Furthermore, in \cite{JGFS} the pluriclosed flow was shown to be naturally interpreted as a flow of  metrics on a \emph{holomorphic} Courant algebroid, and the precise relationship of the two formulations remains slightly unclear.  Another interesting problem is to show in a conceptual way intrinsic to generalized geometry why these conditions are preserved in the first place.

One would like to extend this relationship to more general Courant algebroids.  In forthcoming work \cite{GFGMS} it is shown that the generalized Ricci flow of \cite{GF19} preserves a certain ansatz in complex geometry arising from the Hull--Strominger system, and admits a further interpretation as a flow of Hermitian metrics on a certain holomorphic Courant algebroid, extending the results of \cite{JGFS}, and leading to a number of new a priori estimates.  Recent work \cite{CortesBnGK} gives an extension of generalized K\"ahler geometry to the odd-type Courant algebroids of \cite{RubioThesis}.  It seems natural to expect that this geometric structure is preserved by generalized Ricci flow.
\end{ques}

\bibliography{Streets_Master}
\bibliographystyle{plain}

\end{document}